\theoremstyle{plain}
\newtheorem*{theorem*}{Theorem}
\newtheorem{lemma}{Lemma}
\newtheorem{definition}{Definition}
\newtheorem{corollary}{Corollary}
\newtheorem{theorem}{Theorem}
\newtheorem{conjecture}{Conjecture}
\newtheorem{question}{Question}
\newtheoremstyle{derp}
{3pt}
{3pt}
{}
{}
{\upshape}
{:}
{.5em}
{}
\theoremstyle{derp}
\newtheorem{example}{Example}
\newcommand{\Z}{\mathbb{Z}}
\newcommand{\C}{\mathcal{C}}
\newcommand{\power}{\mathcal{P}}
\newcommand{\N}{\mathbb{N}}
\newcommand{\B}{\mathcal{B}}
\newcommand{\ID}{\mathrm{id}}
\newcommand{\bla}{\mathrm{\textvisiblespace}}
\newcommand\xqed[1]{%
  \leavevmode\unskip\penalty9999 \hbox{}\nobreak\hfill
  \quad\hbox{#1}}
\newcommand\qee{\xqed{$\fullmoon$}}
\newcommand{\Alt}{\mathrm{Alt}}
\newcommand{\Sym}{\mathrm{Sym}}
\newcommand{\End}{\mathrm{End}}
\newcommand{\Aut}{\mathrm{Aut}}
\newcommand{\PAut}{\mathrm{PAut}}
\newcommand{\CPAut}{\mathrm{CPAut}}
\newcommand{\LP}{\mathrm{LP}}
\newcommand{\Homeo}{\mathrm{Homeo}}
\newcommand{\llb}{\llbracket}
\newcommand{\rrb}{\rrbracket}
\newcommand{\RFA}{\mathrm{RFA}}
\title{Conjugacy of reversible cellular automata and one-head machines}
\author{
Ville Salo \\
vosalo@utu.fi
}
\begin{document}
\maketitle

\begin{abstract}
We show that conjugacy of reversible cellular automata is undecidable, whether the conjugacy is to be performed by another reversible cellular automaton or by a general homeomorphism. This gives rise to a new family of finitely-generated groups with undecidable conjugacy problems, whose descriptions arguably do not involve any type of computation. For many automorphism groups of subshifts, as well as the group of asynchronous transducers and the homeomorphism group of the Cantor set, our result implies the existence of two elements such that every finitely-generated subgroup containing both has undecidable conjugacy problem. We say that conjugacy in these groups is eventually locally undecidable. We also prove that the Brin-Thompson group $2V$ and groups of reversible Turing machines have undecidable conjugacy problems, and show that the word problems of the automorphism group and the topological full group of every full shift are eventually locally co-NP-complete.
\end{abstract}

\section{Introduction and main results}

In 1911, Dehn \cite{De11} introduced three decision problems for finitely-presented (f.g.) groups: the word problem, the conjugacy problem, and the isomorphism problem. These problems are still considered some of the most fundamental decision problems for abstract groups. The problems naturally adapt to general countable groups, with respect to an enumeration of generators (for example a recursive presentation). The present paper is concerned with the first two problems, especially the conjugacy problem. 

The word problem can be undecidable for finitely-presented groups and solvable groups of small derived length \cite{No55,Bo57,Co86,Kh89}. For groups defined by a natural action, it tends to be decidable, usually almost by definition. Some examples where it is decidable include automatic groups (including hyperbolic and surface groups), one-relator groups (due to Magnus \cite{LySc15}), virtually polycyclic groups, finitely-presented simple groups, finitely-presented residually finite groups, 3-manifold groups \cite{Ep92}, (finitely-generated) linear groups over a field with decidable operations and equality (for example the algebraic numbers), automata groups, Thompson's groups $F,T,V$, Houghton's groups $H_n$. A large class of groups with decidable word problem, relevant to this paper, are f.g.\ subgroups of automorphism groups of subshifts with computable languages on groups with decidable word problem. This includes the groups of Turing machines from \cite{BaKaSa16} and topological full groups of topologically free subshifts under the same computability conditions.

For natural groups, the word problem is typically not just decidable, but efficiently so, and one approach to a more fine-grained study of the complexity is provided by complexity theory. The word problem is known to be efficiently solvable (polynomial time) in many cases, i.e.\ it is in the complexity class P. For example this is the case for automatic groups \cite{Ep92} and free-by-cyclic groups \cite{Sc08}. There are also completely natural groups where the word problem is ``harder'' than polynomial time:\footnote{The usual disclaimer is that while P $\subset$ co-NP $\subset$ PSPACE, it is famously not known whether these classes are distinct.} for the Brin-Thompson 2$V$ \cite{Bi20} and the group in \cite{Bi03}, the word problem is complete (under polynomial-time reductions) for the class co-NP of languages whose complements are polynomial-time verifiable, and the word problem in automata groups can be PSPACE-complete \cite{DARoWa17}.

The main results of this paper concern the conjugacy problem. The conjugacy problem has also been widely studied, and is known to be decidable for many natural classes of groups. Dehn showed this for surface groups, other (partially overlapping) classes where it is known to be decidable are polycyclic groups \cite{Re69} (see \cite{MaMyNiVa15} for a strong result on nilpotent groups), one-relator groups with torsion \cite{Ne68}, biautomatic groups \cite{Mo97}, hyperbolic groups (very efficiently \cite{EpHo06}), right-angled Artin groups and some natural subgroups of them \cite{CrGoWi09}, $3$-manifold groups \cite{Je12}, the Grigorchuk group \cite{LyMyUs10}, Houghton's groups \cite{AnBuMa15}, Thompson's $F$ \cite{GuSa97} and $V$ \cite{Hi74,Ol10}.

Conjugacy is also undecidable in general, even for finitely-presented groups and groups with decidable word problem. Its undecidability was first shown by Novikov \cite{No54}. Some of the simplest examples of groups with undecidable conjugacy problem are certain f.g.\ subgroups of $F_2 \times F_2$ with this property \cite{Mi68a}, free products with amalgamation $F_2 *_H F_2$ where $H \leq F_2$ is a suitably chosen finitely-generated subgroup \cite{Mi71}, and also $\Z^d \rtimes F_m$ \cite{SuVe12} for a suitable action of $F_m$ on $\Z^d$.

There are some key differences between the word problem and the conjugacy problem. First, while groups defined by abstract properties and presentations may have undecidable word problems, it is equally common to encounter a group in the wild since you have a natural action. It is hard to imagine the word problem being undecidable in the latter case -- if you have a natural action for your group by a natural action, intuitively you can decide the word problem by simply checking whether the given element acts nontrivially. The same is not a priori true for the conjugacy problem; even for groups arising from a natural action, there is no intuitive reason why there should be an algorithm for conjugacy. A second difference between these two problems is that, while word problems of subgroups with decidable word problem are also decidable, the conjugacy problem can become undecidable when passing to a subgroup.

In the case of subgroups of $F_2 \times F_2$ of \cite{Mi68a,LySc15} with undecidable conjugacy problems, it seems unlikely to the author that one would encounter the groups ``by accident'' by studying a natural action: Certainly the generators of these groups are defined by a very natural action (as they act by the restriction of the left-regular action on normal forms), but the precise choice of generators is somewhat complicated, and depends on the word problem for finitely-presented groups. The simplest known example of a finitely-presented group with undecidable word problem is presumably the one due to Collins, and is already quite complicated \cite{Co86}. Thus, in this example arguably undecidability arises because the choice of the subgroup is rather exotic and it is hard to figure out whether a conjugating element can be found in it. The supergroup $F_2 \times F_2$, like all right-angled Artin groups, of coure has decidable conjugacy problem. In the examples $F_2 *_H F_2$ and $\Z^d \rtimes F_m$ there is no obvious supergroup with decidable conjugacy, but it seems that in both cases the defining relations ultimately encode the \cite{Mi68a} example, and thus again it seems unlikely that faithful actions of these groups would arise by accident.

As mentioned, the first two of Dehn's problems make sense also for subgroups of a countable group with respect to a fixed enumeration of generators $(g_i)_{i \in \N}$. Our main result is about groups of reversible cellular automata (or \emph{RCA}) on a finite alphabet $A$. These are another name for automorphism groups of full shift dynamical systems $(A^\Z, \sigma)$, that is, the groups of shift-commuting homeomorphisms on Cantor spaces $A^\Z$, for finite alphabets $A$. These groups are recursively presented, and it is natural to present their elements by their ``local rules'', namely the inverse image of the clopen partition by the symbol at the origin, which constitutes a finite amount of data. 

Groups of reversible cellular automata have been studied at least in \cite{Ry72,BoLiRu88,Al88,KiRo90,Br93}, and more recently in at least \cite{Sa18d,FrScTa19,Sa17b,Sa18a,Sa19a}. Automorphism groups of other subshifts have been of much recent interest \cite{Ol13,SaTo15d,Sa14d,CyKr16a,CoYa14,CyKr16b,DoDuMaPe16,DoDuMaPe17,CyFrKrPe18,BaKaSa16,HaKrSc20}.
The word problem in finitely-generated groups of RCA was observed to be decidable in \cite{BoLiRu88}, and \cite{KiRo90} gives a better bound on the complexity (in particular implying that residually finite groups can have decidable word problem without being groups of RCA).

It is shown in \cite{KiRo90} that right-angled Artin groups act faithfully by RCA, so finitely-generated subgroups of $\Aut(A^\Z)$ can have undecidable conjugacy problem. However, these subgroups are somewhat complicated (even the embedding of $F_2 \times F_2$ is not entirely obvious). A perhaps more natural question is obtained by asking for conjugacy without restricting to a finitely-generated subgroup. There are two natural variants of the conjugacy problem, depending on whether the conjugacy is purely dynamical (realized by a homeomorphism) or whether we look for conjugacy in the group of RCA (which can dynamically be seen as conjugacy of the $\Z^2$-systems given by the joint action of the RCA and the shift). An analog of the latter question for non-reversible cellular automata was asked in 2017 \cite{Ep17}, and both types of conjugacy were shown undecidable in the non-reversible case in \cite{JaKa20}.

Cellular automata theory has a history of being a source of natural undecidable problems. Reversibility and surjectivity are undecidable for two-dimensional cellular automata \cite{Ka90}, and nilpotency\footnote{$f$ is \emph{nilpotent} if $\exists n \in \N, a \in A: f^n(x) = a^\Z$ for all $x \in A^\Z$} is undecidable for one-dimensional cellular automata \cite{AaLe74,Ka92}. The entropy of a cellular automaton cannot be computed or approximated from its local rule \cite{HuKaCu92}, and the set of entropies of one-dimensional cellular automata has a recursion-theoretic characterization as the upper semi-computable ($\Pi_1$) reals \cite{GuZi13}.

Many undecidability results are also known in the reversible setting \cite{Ka08}, for example topological mixing and almost equicontinuity are recursively inseparable properties \cite{Lu10a}. Much is still open: The decidability of \emph{expansivity}, that is whether the orbit of some clopen partition under the RCA generates the topology, is not known for RCA. A classification of entropies is not known, though a class of (computable) entropies of algebraic origin is realized in \cite{Li87}. It is also unknown whether the entropy of an RCA can be approximated based on the local rule, though we mention that the somewhat related Lyapunov exponents were recently shown to be uncomputable from the description of the local rule \cite{Ko19}. Proofs of many undecidability results for RCA are based on the simulation of reversible Turing machines (as dynamical systems \cite{Mo91,Ku97}).

In this paper, we show that reversible cellular automata give rise to new f.g.\ groups with undecidable conjugacy problem. A particularly simple example (obtained from combining our results with \cite{Sa18a}) is described in Figure~\ref{fig:Example} (see Corollary~\ref{cor:Partitioned} for details). One could quite easily imagine bumping into such groups without any a priori interest in their computability properties -- this is what happened. The groups are defined by simple actions on Cantor space (by reversible cellular automata), and the undecidability of their conjugacy is somewhat intrinsic, in that enlargening the group by adding more homeomorphisms cannot turn it decidable. This \emph{eventual local undecidability} of the conjugacy problem may be a new notion, and we exhibit it in several (infinitely-generated) groups. A less technical variant of the same idea proves undecidability of conjugacy for Thompson's $2V$; here we do not show that adding more homeomorphisms cannot turn the problem decidable, but on the other hand $2V$ itself is a very nice and round group.

The concept of f.g.-universality, on top of which we build here, arose in \cite{Sa18a} from studying the groups of RCA that can be built from what we considered the simplest imagineable building blocks among elements of $\Aut(A^\Z)$, namely partial shifts and symbol permutations, see Figure~\ref{fig:Example}. These automorphisms are classical in the theory of cellular automata: a composition of a partial shift with a symbol permutation is often called a \emph{partitioned CA}. The cellular automaton that performs (the school algorithm for) multiplication by $3$ in base $6$ is a prime example of this type, and has been studied both due to its special properties as a CA \cite{Ka12a} and its connection to Mahler's problem \cite{KaKo17}. Composing the subfigures in Figure~\ref{fig:Example} shows the computation $304121_6 \times 3 = 1320403_6$ using this cellular automaton, when $\{0,1\} \times \{0,1,2\}$ is identified with $6$-ary digits in an obvious way.

\begin{figure}
\begin{subfigure}[b]{0.45\textwidth}
\centering
\begin{tikzpicture}[scale=0.55]
\draw (0,0) grid (8,2);
\draw[dashed] (0,0) -- (-1,0);
\draw[dashed] (0,1) -- (-1,1);
\draw[dashed] (0,2) -- (-1,2);
\draw[dashed] (8,0) -- (9,0);
\draw[dashed] (8,1) -- (9,1);
\draw[dashed] (8,2) -- (9,2);
\node () at (0.5,4.5) {$0$};
\node () at (1.5,4.5) {$1$};
\node () at (2.5,4.5) {$0$};
\node () at (3.5,4.5) {$1$};
\node () at (4.5,4.5) {$0$};
\node () at (5.5,4.5) {$0$};
\node () at (6.5,4.5) {$0$};
\node () at (7.5,4.5) {$0$};

\node () at (0.5,3.5) {$0$};
\node () at (1.5,3.5) {$0$};
\node () at (2.5,3.5) {$0$};
\node () at (3.5,3.5) {$1$};
\node () at (4.5,3.5) {$1$};
\node () at (5.5,3.5) {$2$};
\node () at (6.5,3.5) {$1$};
\node () at (7.5,3.5) {$0$};

\node () at (4,2.5) {$\Downarrow$};

\draw (0,3) grid (8,5);
\draw[dashed] (0,3) -- (-1,3);
\draw[dashed] (0,4) -- (-1,4);
\draw[dashed] (0,5) -- (-1,5);
\draw[dashed] (8,3) -- (9,3);
\draw[dashed] (8,4) -- (9,4);
\draw[dashed] (8,5) -- (9,5);
\node () at (0.5,0.5) {$0$};
\node () at (1.5,0.5) {$1$};
\node () at (2.5,0.5) {$0$};
\node () at (3.5,0.5) {$2$};
\node () at (4.5,0.5) {$0$};
\node () at (5.5,0.5) {$1$};
\node () at (6.5,0.5) {$0$};
\node () at (7.5,0.5) {$0$};

\node () at (0.5,1.5) {$0$};
\node () at (1.5,1.5) {$1$};
\node () at (2.5,1.5) {$0$};
\node () at (3.5,1.5) {$0$};
\node () at (4.5,1.5) {$1$};
\node () at (5.5,1.5) {$0$};
\node () at (6.5,1.5) {$1$};
\node () at (7.5,1.5) {$0$};
\end{tikzpicture}
\caption{\emph{Symbol permutations} are permutations $\pi \in \Sym(A)$ applied cellwise; $\pi = ((0,1) \; (1,0) \; (1,1) \; (0,2))$ is shown.}
\end{subfigure}
\hfill
\begin{subfigure}[b]{0.45\textwidth}
\centering
\begin{tikzpicture}[scale=0.55]
\draw (0,0) grid (8,2);
\draw[dashed] (0,0) -- (-1,0);
\draw[dashed] (0,1) -- (-1,1);
\draw[dashed] (0,2) -- (-1,2);
\draw[dashed] (8,0) -- (9,0);
\draw[dashed] (8,1) -- (9,1);
\draw[dashed] (8,2) -- (9,2);
\node () at (0.5,3.5) {$0$};
\node () at (1.5,3.5) {$1$};
\node () at (2.5,3.5) {$0$};
\node () at (3.5,3.5) {$2$};
\node () at (4.5,3.5) {$0$};
\node () at (5.5,3.5) {$1$};
\node () at (6.5,3.5) {$0$};
\node () at (7.5,3.5) {$0$};

\node () at (0.5,4.5) {$0$};
\node () at (1.5,4.5) {$1$};
\node () at (2.5,4.5) {$0$};
\node () at (3.5,4.5) {$0$};
\node () at (4.5,4.5) {$1$};
\node () at (5.5,4.5) {$0$};
\node () at (6.5,4.5) {$1$};
\node () at (7.5,4.5) {$0$};

\node () at (4,2.5) {$\Downarrow$};

\draw (0,3) grid (8,5);
\draw[dashed] (0,3) -- (-1,3);
\draw[dashed] (0,4) -- (-1,4);
\draw[dashed] (0,5) -- (-1,5);
\draw[dashed] (8,3) -- (9,3);
\draw[dashed] (8,4) -- (9,4);
\draw[dashed] (8,5) -- (9,5);

\node[inner sep=1] (z) at (-0.5,0.5) {\phantom{$0$}};
\node[inner sep=1] (a) at (0.5,0.5) {$1$};
\node[inner sep=1] (b) at (1.5,0.5) {$0$};
\node[inner sep=1] (c) at (2.5,0.5) {$2$};
\node[inner sep=1] (d) at (3.5,0.5) {$0$};
\node[inner sep=1] (e) at (4.5,0.5) {$1$};
\node[inner sep=1] (f) at (5.5,0.5) {$0$};
\node[inner sep=1] (g) at (6.5,0.5) {$0$};
\node[inner sep=1] (h) at (7.5,0.5) {\color{gray} $0$};
\node[inner sep=1] (i) at (8.5,0.5) {\phantom{$0$}};

\node () at (0.5,1.5) {$0$};
\node () at (1.5,1.5) {$1$};
\node () at (2.5,1.5) {$0$};
\node () at (3.5,1.5) {$0$};
\node () at (4.5,1.5) {$1$};
\node () at (5.5,1.5) {$0$};
\node () at (6.5,1.5) {$1$};
\node () at (7.5,1.5) {$0$};

\draw[->,color=gray] (a) to[out = -135, in=-45, looseness = 1.5] (z);
\draw[->] (b) to[out = -135, in=-45, looseness = 1.5] (a);
\draw[->] (c) to[out = -135, in=-45, looseness = 1.5] (b);
\draw[->] (d) to[out = -135, in=-45, looseness = 1.5] (c);
\draw[->] (e) to[out = -135, in=-45, looseness = 1.5] (d);
\draw[->] (f) to[out = -135, in=-45, looseness = 1.5] (e);
\draw[->] (g) to[out = -135, in=-45, looseness = 1.5] (f);
\draw[->] (h) to[out = -135, in=-45, looseness = 1.5] (g);
\draw[->,color=gray] (i) to[out = -135, in=-45, looseness = 1.5] (h);
\end{tikzpicture}
\caption{The group $\Z$ acts by \emph{partial shifts}, which shift the ternary track.
\; \; \em \; \; \em \; \; \em \; \; \em \; \; \em \; \; \em \; \; \em \;}
\end{subfigure}
\hfill
\caption{Let $A = \{0,1\} \times \{0,1,2\}$. The conjugacy problem is undecidable in every finitely-generated group of self-homeomorphisms of $A^\Z$, which contains the action of the free product $\Sym(A) * \Z$ generated by (a) symbol permutations and (b) partial shifts.} 
\label{fig:Example}
\end{figure}
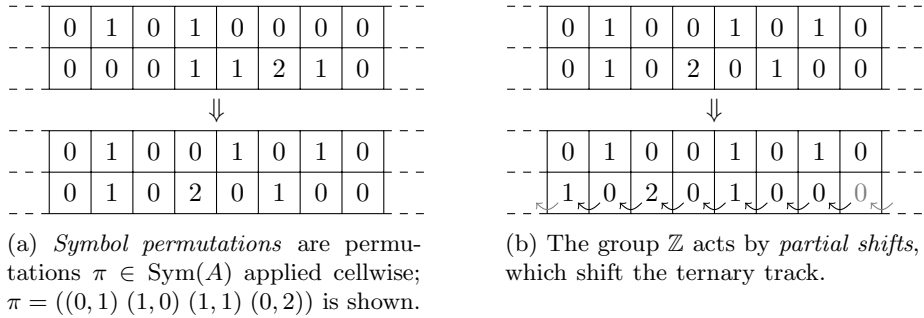

From the point of view of symbolic dynamics, in particular automorphism groups, the main technical contribution of the present paper concerns the finitely-generated subgroup $\CPAut_k[A;B]$ of the automorphism group of $(A \times B)^\Z$ under the action of $\sigma^k$. Our main construction result implies in particular that all marker automorphisms of the system $(B^\Z, \sigma)$ (in the sense of \cite{BoLiRu88}, applied without looking at the $A^\Z$-component) are contained in $\CPAut_k[A;B]$. This is direct containment rather than simulation, so in the terminology of \cite{Sa18a} we obtain that the group generated by marker automorphisms is subfinitely-generated.

Two open problems from the literature are solved: The conjugacy problems for RCA both for the $\Z$-action and the joint $\Z^2$-action with the shift were asked in \cite{JaKa20}. The undecidability of conjugacy in the Brin-Thompson 2V was asked in \cite{BeBl14}.

\subsection{Main results}

By a \emph{topological dynamical system} or \emph{$G$-system} we mean a countable group $G$ acting on a topological space $X$ by homeomorphisms. In the case of topological dynamical systems, a \emph{conjugacy} between $(G, X)$ and $(G, Y)$ usually means dynamical isomorphism, i.e.\ a homeomorphism between $X$ and $Y$ that intertwines the actions. A \emph{full shift} is  the $G$-system $A^G$ for a finite alphabet $A$ and $G$-action $gx_h = x_{g^{-1}h}$. A \emph{subshift} is a closed $G$-invariant subsystem. The \emph{automorphism group} of a subshift consists of its self-conjugacies. We use the term \emph{reversible cellular automaton} (\emph{RCA}) as synonym for an element of an automorphism group of a subshift, especially in the case of a full shift.

An RCA and the shift map taken together give a $\Z^2$-action, and two such $\Z^2$-actions obtained from RCA are dynamically isomorphic if and only if they are conjugate as elements of the automorphism group of the full shift. We can also directly ask whether the $\Z$-actions are dynamically isomorphic.


The following is our main theorem.

\begin{theorem}
\label{thm:Main}
Let $H$ be a group and let $X \subset A^H$ be any subshift that explicitly simulates a nontrivial one-dimensional full shift. Then there exists a finitely-generated subgroup $G \leq \Aut(X)$ with word problem in co-NP, and computable functions $\alpha, \beta : \mathbb{N} \to G$, such that for all $n \in \N$, the following are equivalent:
\begin{enumerate}
\item the $n$th program halts,
\item $\alpha(n) = \beta(n)^g$ for some involution $g \in G$,
\item $\alpha(n) = \beta(n)^g$ for some $g \in \Aut(X)$,
\item the $\Z^2$-systems $(X, \sigma, \alpha(n))$ and $(X, \sigma, \beta(n))$ are isomorphic,
\item the $\Z$-systems $(X, \alpha(n))$ and $(X, \beta(n))$ are isomorphic,
\item $(X, \beta(n))$ embeds into $(X, \alpha(n))$,
\item $\alpha(n)$ and (or) $\beta(n)$ are (is) periodic.
\end{enumerate}
\end{theorem}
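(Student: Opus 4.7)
The plan is to reduce the halting problem to conjugacy inside a finitely-generated subgroup $G \le \Aut(X)$, by encoding arbitrary reversible Turing machines as marker automorphisms. Since $X$ explicitly simulates a nontrivial one-dimensional full shift, the entire construction can be carried out on the one-dimensional full shift and transported to $X$ via the simulation. Take $G$ to be finitely generated by a fixed list of simple marker-type RCAs rich enough that a word in the generators can encode the dynamics of an arbitrary reversible Turing machine acting on a suitably marked region, outside which the RCA is the identity; the availability of such a list follows from the paper's main technical result that marker automorphisms lie in $\CPAut_k[A;B]$. The word problem in $G$ is in co-NP since nontriviality of an RCA is witnessed by a single finite configuration whose size is polynomial in the word length.

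For each $n$, fix a reversible Turing machine $M_n$ simulating the $n$th program on a canonical input, arranged so that $M_n$ halts iff the $n$th program halts. Define $\alpha(n) \in G$ to be the marker RCA that, on each maximal region delimited by a reserved ``$\alpha$-marker'' and encoding an $M_n$-tape, performs a single step of $M_n$ (cleaned up by the standard forward-then-reverse trick so that the action becomes periodic precisely when $M_n$ halts on the initial tape), and is the identity elsewhere. Define $\beta(n) \in G$ analogously but using a structurally mirrored ``$\beta$-marker''; the two marker families are positioned in parallel tracks so that a single marker-swap involution $g \in G$ realises $\alpha(n)^g = \beta(n)$ whenever $M_n$ halts (in the halting case, all marked regions have reached their common cleanup state and $g$ simply relabels $\alpha$-markers as $\beta$-markers cell-by-cell).

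The equivalences then split into a routine half and a technical half. The equivalence $(1) \Leftrightarrow (7)$ is immediate from the construction, since both $\alpha(n)$ and $\beta(n)$ have finite order in $\Aut(X)$ exactly when $M_n$ halts on the canonical input. The implication $(1) \Rightarrow (2)$ is supplied by the marker-swap involution $g$ above. The chain $(2) \Rightarrow (3) \Rightarrow (4) \Rightarrow (5) \Rightarrow (6)$ is routine: each element of $\Aut(X)$ commutes with the $H$-shift, so $\Aut(X)$-conjugacy upgrades to joint $H \times \Z$-conjugacy; forgetting the shift component yields $\Z$-conjugacy, which trivially implies $\Z$-embedding. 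It therefore suffices to close the loop by proving $(6) \Rightarrow (7)$.

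The final implication $(6) \Rightarrow (7)$ is the main obstacle. The plan is to design $\alpha(n)$ and $\beta(n)$ so that in the non-halting case the $\Z$-systems $(X, \alpha(n))$ and $(X, \beta(n))$ have structurally incompatible orbit structures on their ``running marker'' configurations: for example, by arranging that $\beta(n)$ admits a periodic point of a period $p$ that $\alpha(n)$ provably cannot realise on any configuration, so that any continuous $\Z$-equivariant injection $(X, \beta(n)) \hookrightarrow (X, \alpha(n))$ would have to transport this point to a forbidden periodic point. The delicate issue, which will require the most care, is to make this period-based (or entropy-based, or similar) separation compatible with the halting-case construction of the involution $g$: the $\alpha$- and $\beta$-marker structures must be similar enough that $g$ exists and swaps them upon cleanup, yet dissimilar enough that they yield dynamically distinguishable non-halting dynamics.
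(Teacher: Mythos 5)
Your high-level skeleton---reduce halting to conjugacy via one-head machines encoded in marker automorphisms, pack everything into an explicitly f.g.-universal group, verify the word problem is co-NP by a witness argument, and recognize that the crux is that the implication $(6)\Rightarrow(7)$ needs a dynamical obstruction to embedding in the non-halting case---matches the paper's strategy. However, the central construction is broken, in a way that would make the whole equivalence chain collapse.

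The ``mirrored markers'' design makes $\alpha(n)$ and $\beta(n)$ conjugate for \emph{every} $n$, not just when the program halts. If $\alpha(n)$ acts by a step of $M_n$ only on $\alpha$-marked regions and $\beta(n)$ acts identically on $\beta$-marked regions, with the two marker families disjoint, then a bounded-radius involution that swaps $\alpha$-markers with $\beta$-markers (and otherwise preserves content) is a perfectly good element of $\Aut(X)$ conjugating them. This holds whether $M_n$ is periodic or not, so item $(3)$ would always be true and the equivalence with $(1)$ would fail. Moreover, your justification---``all marked regions have reached their common cleanup state''---confuses the dynamics of iterating a configuration with the definition of the map: a marker automorphism is a single homeomorphism, and a conjugating element must intertwine the actions on \emph{all} configurations, not just on some eventual ones. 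The paper avoids this by making $\alpha$ and $\beta$ act via the \emph{same} conveyor-belt simulation of $M_n$, differing only in that $\beta$ additionally flips an ``activity bit'' on a blinker symbol whenever the simulated head sits next to it in an active state. When $M_n$ is periodic the accumulated difference is bounded along each orbit and can be absorbed by a marker-type involution (built from even permutations so that it lies inside $\CPAut_2[A;B]$, hence inside $G$ --- a nontrivial engineering step your sketch does not address). When $M_n$ is not periodic, the head can return to the blinker an odd number of times with arbitrarily long excursions in between, forcing $\beta(n)$ to have a configuration whose orbit closure contains both fixed points and period-$2$ points, which $\alpha(n)$ provably cannot. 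That is the obstruction closing the loop.

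Two further issues. First, you invoke a ``forward-then-reverse trick'' to make $\alpha(n)$ periodic iff the program halts, but Bennett-style uncomputation only makes the machine halt-and-restore on a single input; it does not bound orbit lengths over all tape contents, which is what periodicity of the CA requires. The paper instead relies on Kari--Ollinger's theorem that periodicity of reversible one-head machines (as dynamical systems) is undecidable, a nested Hooper-type construction that the forward-then-reverse trick does not reproduce. Second, for $(1)\Rightarrow(2)$ you need the conjugating involution not just in $\Aut(X)$ but in the \emph{fixed finitely-generated} $G$; this is where the paper's permutation-engineering lemma (showing a large class of conditioned local permutations land in $\CPAut_k[A;B]$) does real work, and your proposal treats it as automatic.
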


``The $n$th program halts'' means $n \in L$, where $L \subset \N$ is (some natural encoding of) the halting problem. The definition of ``explicitly simulates'' is somewhat technical, see Section~\ref{sec:ExplicitSimulation}. Every one-dimensional full shift with explicitly simulates itself, and thus the theorem applies when $X = A^\Z$.

Our result is somewhat orthogonal to \cite{JaKa20}, which proves the undecidability of conjugacy for not necessarily reversible CA. Their full statement includes an entropy separation and that one system is not even a factor of the other when the two CA are not conjugate. It also applies to one-sided cellular automata. On the other hand, the construction of \cite{JaKa20} does not take place inside a f.g.\ subgroup of $\Aut(A^\Z)$ (or submonoid of $\End(A^\Z)$). The fact ours does has corollaries beyond cellular automata and automorphism groups of subshifts, as explained in Section~\ref{sec:Corollaries}.

The proof of the main theorem is based on the undecidability of periodicity of reversible Turing machines (or one-head machines, in the terminology of the present paper), proved in \cite{KaOl08}, so it is not surprising that a slight variant of the construction applies to $2V$. This is proved as Theorem~\ref{thm:2V}, and solves a question of Belk and Bleak \cite{BeBl14}. The elements between which the conjugacy is asked, as well as the conjugating element, are actually elements of the group of reversible Turing machines $\mathrm{RTM}(2,1)$ from \cite{BaKaSa16}, so we get the result also for this group. Intuitively, this is a restriction of $2V$ where one is not allowed to shrink or expand the tape.

\begin{theorem}
The Brin-Thompson group $2V$ has undecidable conjugacy problem. The group of reversible Turing machines $\mathrm{RTM}(s,q)$ has undecidable conjugacy problem unless it is virtually abelian.
\end{theorem}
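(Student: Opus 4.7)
The plan is to mirror the proof of Theorem~\ref{thm:Main}, but carried out directly inside groups of reversible one-head machines, so that no cellular-automaton simulation layer is needed and the construction already lives natively in $\mathrm{RTM}(s,q)$ and, via a standard embedding, in $2V$. The input is again the undecidability of periodicity of reversible one-head machines from \cite{KaOl08}. Given an effective enumeration $(M_n)_{n \in \N}$ of reversible one-head machines, I would build computable sequences $\alpha(n), \beta(n) \in \mathrm{RTM}(2,1)$ so that $\alpha(n)$ is conjugate to $\beta(n)$ (whether in $\mathrm{RTM}(s,q)$, in $2V$, or in the ambient homeomorphism group) if and only if $M_n$ is periodic, which by the reduction underlying \cite{KaOl08} encodes the halting problem.

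Concretely, $\alpha(n)$ runs $M_n$ on a private stripe of the tape, interleaved with bookkeeping that ensures it acts nontrivially on every configuration and has orbit length on every configuration equal (up to a fixed factor) to the orbit length of $M_n$ on the corresponding stripe configuration. The element $\beta(n)$ is a fixed periodic benchmark of the same combinatorial shape whose global period is known in advance. If $M_n$ is periodic, $\alpha(n)$ has the same finite period as $\beta(n)$, and the conjugacy can be realised by an explicit involution that permutes $M_n$-orbits with $\beta(n)$-orbits using a reversible Turing-machine-level matching; if $M_n$ is aperiodic, $\alpha(n)$ has orbits of unbounded or infinite length, an invariant no homeomorphism can reconcile with $\beta(n)$. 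Embedding $\mathrm{RTM}(2,1)$ into $2V$ by identifying a two-way tape with $\{0,1\}^\N \times \{0,1\}^\N$ and each local reversible transition with a prefix-exchange on this product (as in \cite{BaKaSa16}) transports the same reduction into $2V$, answering the question of Belk and Bleak \cite{BeBl14}. For $\mathrm{RTM}(s,q)$ with parameters large enough to contain $\mathrm{RTM}(2,1)$ as a subgroup, undecidability transfers immediately.

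The step I expect to require the most care is forcing that \emph{dynamical} conjugacy, and not merely conjugacy by an element of $\mathrm{RTM}(s,q)$, already implies periodicity of $M_n$: this is what rules out clever conjugators that could exist in $\Homeo$ but not in the small group. The fix, as in Theorem~\ref{thm:Main}, is to design $\alpha(n)$ to have a dense set of configurations on which its orbit length is exactly the $M_n$-orbit length of the stripe content, making orbit-length statistics a homeomorphism invariant that distinguishes $\alpha(n)$ from $\beta(n)$ whenever $M_n$ is aperiodic. Finally, the ``unless virtually abelian'' clause for $\mathrm{RTM}(s,q)$ is a matter of excluding the degenerate small-parameter cases, where $\mathrm{RTM}(s,q)$ is finite or virtually $\Z^k$ by inspection of its generators, and the conjugacy problem is decidable by classical results on virtually abelian groups; in all remaining cases there is enough combinatorial room to carry out the construction above.
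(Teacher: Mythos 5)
The proposal takes a genuinely different route from the paper, and that route has a fatal gap. You propose to compare $\alpha(n)$, which simulates $M_n$, against a \emph{fixed periodic benchmark} $\beta(n)$ ``whose global period is known in advance,'' and to argue that when $M_n$ is periodic, $\alpha(n)$ and $\beta(n)$ share a finite period and can be matched orbit-for-orbit. This cannot work as stated: when $M_n$ is periodic, its period $p$ is \emph{not computable from $n$} (otherwise one would decide periodicity, hence halting, by simulating $p$ steps). So $\beta(n)$ cannot be arranged in advance to have the same period as $\alpha(n)$. Even setting that aside, global period is far too weak an invariant: conjugacy of two Cantor-space homeomorphisms requires the entire orbit-length statistics to agree (how many orbits of each length, and with what topological arrangement), and there is no reason a generic periodic benchmark would match the orbit census of a machine simulation. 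Nor is it clear how a continuous conjugator ``permuting $M_n$-orbits with $\beta(n)$-orbits'' would be built when the orbit structure of $M_n$ depends uncomputably on $n$.

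The paper sidesteps all of this by a crucially different design: \emph{both} $\alpha(n)$ and $\beta(n)$ simulate the \emph{same} machine $M_n$ via conveyor-belt coding over mutually unbordered words, and they differ only in that $\beta(n)$ additionally toggles a ``blinker'' bit encoded in a reserved word whenever the simulated head is adjacent to it. Because the two maps have identical head dynamics, their orbit structures agree automatically in the periodic case, and the conjugator is a marker-type involution that merely re-phases the blinker bit on each orbit (well-defined precisely because each orbit has a canonical clopen representative, and even because a dummy state bit is available). In the aperiodic case, one exhibits configurations where a blinker sits far from the origin, gets flipped by a head excursion, and returns; a bounded-radius conjugator cannot account for this flip on the leftmost blinker, which gives the contradiction. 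Your proposal does identify the right underlying undecidable property, the right embedding of $\mathrm{RTM}(2,1)$ into $2V$, and the right trivial-case exclusion for the ``virtually abelian'' clause, but the benchmark-matching core is not salvageable: you need both elements to carry the same undecidably-complicated dynamics, with the encoded ``secret'' difference localized in a marker that a conjugator can correct if and only if orbits are uniformly finite.
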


The group of reversible Turing machines is virtually abelian if and only if the $s = 1$, meaning the tape alphabet is trivial \cite{BaKaSa16}.


We also prove some results about the word problem in groups of RCA which does not seem to appear in the literature. For finitely-generated groups of reversible cellular automata (RCA) we show that this problem is in co-NP, and that it is co-NP-complete for some locally-finite-by-cyclic groups of RCA, see Theorem~\ref{thm:WP}. The proof also adapts to topological full groups of full shifts, see Theorem~\ref{thm:topofullgroup}.

\subsection{Corollaries and alternate wordings}
\label{sec:Corollaries}

From the main theorem, we obtain several corollaries (with mostly trivial proofs), and we discuss these special cases. The third and fourth item of the theorem actually state the same thing in a different language, and give the following corollary:

\begin{corollary}
Let $X$ be a subshift which explicitly simulates a one-dimensional full shift. Then the conjugacy problem in the group $\Aut(X)$ is $\Sigma^0_1$-hard. If additionally $\Aut(X)$ is recursively presented with decidable word problem, then the conjugacy problem is $\Sigma^0_1$-complete.
\end{corollary}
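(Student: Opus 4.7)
The plan is to derive both halves directly from Theorem~\ref{thm:Main}, so no new construction is needed. For $\Sigma^0_1$-hardness, I would use the computable maps $\alpha, \beta : \N \to G \leq \Aut(X)$ provided by the theorem. By the equivalence (1) $\Leftrightarrow$ (3), $n$ lies in the halting problem if and only if $\alpha(n)$ and $\beta(n)$ are conjugate in $\Aut(X)$. Since $\alpha, \beta$ are computable and produce descriptions of elements of $\Aut(X)$ (their values in the finitely-generated subgroup $G$ can be unpacked into, say, local rules of reversible cellular automata, or any other description used to present elements of $\Aut(X)$), this is a many-one reduction from the $\Sigma^0_1$-complete halting problem to the conjugacy problem of $\Aut(X)$. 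Hardness follows.

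For the $\Sigma^0_1$ upper bound under the additional hypothesis, I would apply the standard enumerate-and-test argument. Given two elements $a, b \in \Aut(X)$, use the recursive presentation to enumerate all group elements $g \in \Aut(X)$; for each candidate, ask the decidable word problem whether $g a g^{-1} b^{-1} = e$. This semi-decision procedure halts exactly when $a$ and $b$ are conjugate, so conjugacy lies in $\Sigma^0_1$. Combined with the hardness obtained above, this yields $\Sigma^0_1$-completeness.

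The only point requiring a brief comment is the compatibility of presentations: the outputs of $\alpha$ and $\beta$ live a priori in the finitely-generated subgroup $G$, but one must feed them into the conjugacy problem of $\Aut(X)$ in its own presentation. This is harmless because from a word in the generators of $G$ one computably reconstructs an element of $\Aut(X)$ in any reasonable presentation (e.g.\ by local rules, or by the generator indexing of the assumed recursive presentation), so the many-one reduction goes through. I expect no genuine obstacle here; the whole corollary is bookkeeping once Theorem~\ref{thm:Main} is in hand.
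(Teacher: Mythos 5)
Your proposal is correct and matches the paper's intent: the paper explicitly labels this corollary as having a "mostly trivial proof" following directly from items (1) and (3) of Theorem~\ref{thm:Main}, together with the standard enumerate-and-test upper bound, and the paper's remark after the corollary about representing automorphisms "by local rules, or as words over any sufficiently large ... set of elements" is precisely the presentation-compatibility point you flag at the end.
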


The $\Sigma^0_1$-hardness statement is true when automorphisms are given by local rules, or as words over any sufficiently large (w.r.t.\ set inclusion) set of elements of the group.

The fifth item of the theorem gives that conjugacy of RCA as dynamical systems is undecidable:

\begin{corollary}
Let $X$ be a subshift which explicitly simulates a one-dimensional full shift. Given $f, g \in \Aut(X)$, it is $\Sigma^0_1$-hard to determine whether the $\Z$-systems $(f, X)$ and $(g, X)$ are dynamically isomorphic.
\end{corollary}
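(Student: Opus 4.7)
The plan is to derive this corollary directly from Theorem~\ref{thm:Main}, which we are taking as given. The content of the corollary is already present in item (5) of the theorem, so the only work is to package the equivalence of (1) and (5) as a reduction witnessing $\Sigma^0_1$-hardness.

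First, I would fix $L \subset \N$ to be (some natural encoding of) the halting problem, which is $\Sigma^0_1$-complete. Applying Theorem~\ref{thm:Main} to the given subshift $X$, I obtain a finitely-generated subgroup $G \leq \Aut(X)$ and computable functions $\alpha, \beta : \N \to G$ such that, by the equivalence of (1) and (5), we have $n \in L$ if and only if the $\Z$-systems $(X, \alpha(n))$ and $(X, \beta(n))$ are dynamically isomorphic. The map $n \mapsto (\alpha(n), \beta(n))$ therefore gives a computable many-one reduction from the halting problem to the dynamical isomorphism problem for $\Z$-actions generated by a single element of $\Aut(X)$, and this immediately yields $\Sigma^0_1$-hardness.

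The only point that requires a sentence of care is the representation of elements of $\Aut(X)$ used implicitly in the statement. Since $\alpha, \beta$ take values in a fixed finitely-generated $G \leq \Aut(X)$, they can be output as words in the generators of $G$; under any reasonable presentation of $\Aut(X)$ (words over a chosen generating set, local rules for automorphisms of $X$, etc.) this word can be converted uniformly in $n$ into the required format, so computability of the reduction is unproblematic.

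There is no serious obstacle here: all of the construction work, including the existence of $\alpha, \beta$ and the equivalence of items (1) and (5), has been absorbed into Theorem~\ref{thm:Main}. The corollary is a direct linguistic recasting of that equivalence in terms of a decision problem.
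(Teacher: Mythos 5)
Your proposal is correct and takes the same approach as the paper: the corollary is an immediate consequence of the equivalence of items (1) and (5) in Theorem~\ref{thm:Main}, with $n \mapsto (\alpha(n), \beta(n))$ as the reduction from the halting problem. The paper presents this as a one-line observation; your remark about the representation of automorphisms is a sensible bit of extra care but not strictly needed.
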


The precise complexity of this problem stays open, even when $X$ is a full shift. In this case, the problem is easily seen to be $\Sigma^1_1$. Since many natural sets and ordinals associated to finitely-presented symbolic dynamical systems are high in the arithmetical hierarchy or even properly analytic \cite{We19,Sa13}, one might conjecture that dynamical isomorphism of $\Z$-actions by RCA on full shifts $A^\Z$ is $\Sigma^1_1$-complete.

Next, by the second item of the theorem, the f.g.\ group $G$ itself has undecidable conjugacy; combining this with fifth item shows gives us a nice robust statement.

\begin{corollary}
\label{cor:Robust}
Let $X$ be any subshift that explicitly simulates a full shift. Then there is an f.g.\ group $G \leq \Aut(X)$ such that for any f.g.\ $G'$ group satisfying $G \leq G' \leq \Homeo(X)$ the conjugacy problem is undecidable.
\end{corollary}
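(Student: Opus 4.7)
The plan is to apply Theorem~\ref{thm:Main} directly and let $G$ be the finitely-generated subgroup of $\Aut(X)$ it produces, with companion computable maps $\alpha, \beta : \N \to G$. I claim this single $G$ already witnesses the corollary for every f.g.\ $G'$ with $G \leq G' \leq \Homeo(X)$.

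The verification is a two-sided sandwich argument using items~(2) and~(5) of Theorem~\ref{thm:Main}. If the $n$th program halts, item~(2) supplies an involution $g \in G \leq G'$ with $\alpha(n) = \beta(n)^g$, so $\alpha(n)$ and $\beta(n)$ are conjugate in $G'$. Conversely, suppose $\alpha(n) = \beta(n)^g$ for some $g \in G'$. Since $g \in G' \leq \Homeo(X)$, the homeomorphism $g$ realizes a dynamical isomorphism between the $\Z$-systems $(X,\alpha(n))$ and $(X,\beta(n))$, so item~(5) forces the $n$th program to halt. Hence conjugacy of $\alpha(n)$ and $\beta(n)$ in $G'$ is equivalent to halting of the $n$th program.

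To convert this into undecidability of the conjugacy problem of $G'$ in the usual sense, fix a finite generating set $S$ for $G'$ and, as a one-time setup, pick words in $S^\ast$ representing each of the finitely many generators of $G$ (such words exist because $G \leq G'$; they need not be produced algorithmically since there are only finitely many to hardcode). Composing $\alpha$ and $\beta$ with this substitution yields a many-one reduction from the halting problem to the conjugacy problem of $G'$ with respect to $S$, and the undecidability claim follows.

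The only step that momentarily looks like an obstacle is the converse direction of the sandwich: enlarging from $\Aut(X)$ to $\Homeo(X)$ could \emph{a priori} introduce new conjugacies between $\alpha(n)$ and $\beta(n)$ and so destroy undecidability. What saves us is that Theorem~\ref{thm:Main} was engineered precisely so that item~(5) rules out any such spurious conjugacies -- no homeomorphism of $X$ can conjugate $\alpha(n)$ to $\beta(n)$ unless the $n$th program halts. This topological rigidity is the sole nontrivial ingredient; everything else in the corollary is a formal consequence of having $G$ sitting inside $G'$ inside $\Homeo(X)$.
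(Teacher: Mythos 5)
Your proposal is correct and matches the paper's intended (though only sketched) argument: use item~(2) of Theorem~\ref{thm:Main} to get conjugacy in $G \leq G'$ when the $n$th program halts, and item~(5) to rule out any conjugating $g \in \Homeo(X)$ when it does not, yielding a halting-problem reduction uniform over every f.g.\ $G'$ sandwiched between $G$ and $\Homeo(X)$. The remark about hardcoding generator words is a fine (if slightly pedantic) clarification that the paper leaves implicit.
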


As the group $G$, one can take any explicitly f.g.-universal group for $\Aut(A^\Z)$. All the f.g.-universal groups constructed in \cite{Sa18a} are explicitly so, and all embeddings of groups $\Aut(A^\Z)$ into automorphism groups of subshifts that we are aware of (e.g.\ \cite{Ho10}) are explicit. We now give two explicit examples of explicitly f.g.-universal groups.

For finite alphabets $A, B$ define $\PAut'[A; B] \leq \Aut((A \times B)^\Z)$ as the smallest group containing the partial shift $\sigma_2$ on the first track, i.e.\ $\sigma_2(x, y)_i = (x_i, y_{i+1})$, and for each $\pi \in \Sym(A \times B)$ the automorphism $f_{\pi}(x)_i = \pi(x_i)$ (see Section~\ref{sec:Definitions} for details, and Figure~\ref{fig:Example} for an example.).

\begin{corollary}
\label{cor:Partitioned}
Let $A, B$ be nontrivial finite alphabets, with $\max(|A|, |B|) \geq 3$. Then $\PAut'[A;B]$ (and any f.g.\ supergroup in $\Homeo((A \times B)^\Z)$) has undecidable conjugacy problem.
\end{corollary}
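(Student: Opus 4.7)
The plan is to apply Corollary~\ref{cor:Robust} to the full shift $X = (A \times B)^\Z$, which trivially explicitly simulates itself as a nontrivial full shift (since $|A|,|B| \geq 2$). That corollary supplies a finitely-generated group $G \leq \Aut(X)$ such that every finitely-generated intermediate group $G \leq G' \leq \Homeo(X)$ has undecidable conjugacy problem, and by the commentary following the corollary the witness $G$ may be taken to be any explicitly f.g.-universal subgroup of $\Aut((A \times B)^\Z)$.

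First I would observe that $\PAut'[A;B]$ is itself finitely generated: the symmetric group $\Sym(A \times B)$ is finite, so together with the single partial shift $\sigma_2$ it admits a finite generating set. Hence if the witness $G$ from Corollary~\ref{cor:Robust} can be realised as a subgroup of $\PAut'[A;B]$, then taking $G' = \PAut'[A;B]$ (or any f.g.\ supergroup inside $\Homeo((A \times B)^\Z)$) immediately yields undecidability of the conjugacy problem in $G'$, which is what we want.

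The key step is therefore to invoke the explicit f.g.-universality of $\PAut'[A;B]$ inside $\Aut((A \times B)^\Z)$, which is precisely the type of result established in \cite{Sa18a} for this style of partitioned group. The hypothesis $\max(|A|,|B|) \geq 3$ is exactly what is needed there to provide sufficient combinatorial room for the marker/encoding constructions that absorb an arbitrary f.g.\ subgroup of $\Aut((A \times B)^\Z)$ into $\PAut'[A;B]$. Granting this universality, one may even take the witness $G$ of Corollary~\ref{cor:Robust} to be $\PAut'[A;B]$ itself, and the conclusion follows.

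The main obstacle is the f.g.-universality statement; once it is cited from \cite{Sa18a}, the rest is bookkeeping. No new dynamical construction is required beyond what underlies Theorem~\ref{thm:Main} and Corollary~\ref{cor:Robust}, since the present corollary is essentially a repackaging of those results together with the universality of $\PAut'[A;B]$.
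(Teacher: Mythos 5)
Your proof takes essentially the same route as the paper (invoke Corollary~\ref{cor:Robust} with an explicitly f.g.-universal witness, then argue that $\PAut'[A;B]$ contains such a witness via \cite{Sa18a}), but there is a gap in the citation step. What \cite{Sa18a} actually proves is that $\PAut[A;B] = \langle \PAut'[A;B], \sigma_1 \rangle$ is f.g.-universal --- i.e.\ the generating set there includes \emph{both} partial shifts $\sigma_1$ and $\sigma_2$, whereas the group $\PAut'[A;B]$ in the statement is generated by the symbol permutations together with $\sigma_2$ alone. You assert the universality of $\PAut'[A;B]$ directly from \cite{Sa18a}, but $\sigma_1$ is not (at least not obviously) in $\PAut'[A;B]$, so this does not follow without an argument.

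The missing step, which the paper supplies, is the observation that $\sigma_1$ is in fact not needed: the construction in \cite{Sa18a} only uses $\sigma_1$ through conjugates of the form $f_{\pi}^{\sigma_1^i}$, and since $f_\pi$ commutes with the full shift $\sigma = \sigma_1 \sigma_2$, one has $f_{\pi}^{\sigma_1} = f_{\pi}^{\sigma \sigma_2^{-1}} = f_{\pi}^{\sigma_2^{-1}}$. Hence every conjugate of a symbol permutation by a power of $\sigma_1$ equals a conjugate by the corresponding power of $\sigma_2^{-1}$, and the construction goes through inside $\PAut'[A;B]$. Without noting this identity, the chain ``$\PAut'[A;B]$ is f.g.-universal $\Rightarrow$ it contains a witness $G$ for Corollary~\ref{cor:Robust}'' is not justified by the reference alone. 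Once you add this observation, your argument is complete and matches the paper's.
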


\begin{proof}
This follows from the (proof of) Corollary~\ref{cor:Robust} and the proof in \cite{Sa18a} that $\PAut[A; B] = \langle \PAut'[A; B], \sigma_1 \rangle$ is f.g.-universal: As discussed in Section~\ref{sec:ExplicitSimulation}, all known such proofs actually give explicit embeddings, with respect to some explicit simulation of a full shift, and $G$ can be any such group in the proof of Corollary~\ref{cor:Robust}. The shift $\sigma_1$ is easily seen not to be needed in the construction, as the construction is based on constructing a composition of conjugates $f_{\pi}^{\sigma_i}$, and an elementary calculation shows that $f_{\pi}^{\sigma_1} = f_{\pi}^{\sigma_2^{-1}}$.
\end{proof}

For another example, recall that reversible cellular automata have a radius, which determines how far their local rule looks for when deciding the new symbol at each position, see Section~\ref{sec:Definitions} for details. The proof of the following corollary is the same as that of the previous one.

\begin{corollary}
\ref{cor:Partitioned}
Let $A$ be any large enough finite alphabet. Let $F$ be the set of RCA $f \in \Aut(A^\Z)$ such that $f$ and $f^{-1}$ both have radius one. Then $\langle F \rangle \leq \Aut(A^\Z)$ (and any f.g.\ supergroup in $\Homeo(A^\Z)$) has undecidable conjugacy problem.
\end{corollary}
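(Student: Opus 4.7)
The plan is to follow the template of Corollary~\ref{cor:Partitioned} verbatim, substituting $\langle F \rangle$ for $\PAut'[A;B]$. First I would observe that $F$ is finite: a radius-one RCA is determined by a local rule $A^3 \to A$, of which only finitely many are invertible with a radius-one inverse. This already makes $\langle F \rangle$ itself finitely generated, so Corollary~\ref{cor:Robust} becomes applicable to it as soon as an appropriate f.g.-universal subgroup has been exhibited inside $\langle F \rangle$.

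Next I would pick $|A|$ large enough that $A$ can be identified with a product $A' \times B' \times C$ where $A', B'$ are non-trivial, $\max(|A'|,|B'|) \geq 3$, and $C$ is arbitrary (any composite $|A| \geq 6$ with a factor of size at least three should work, with $C$ allowed to be a singleton). Under this identification, every cellwise permutation $f_\pi$ with $\pi \in \Sym(A' \times B' \times C)$ has radius zero, as does its inverse $f_{\pi^{-1}}$; the partial shift on the $B'$-track, $\sigma_2(a,b,c)_i = (a_i, b_{i+1}, c_i)$, together with its inverse, is determined by a two-cell window and so has radius one. Thus all the generators of $\PAut'[A';B']$ and their inverses lie in $F$, giving $\PAut'[A';B'] \leq \langle F \rangle$.

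Finally, I would invoke (the proof of) Corollary~\ref{cor:Partitioned}: it supplies an f.g.\ subgroup $G \leq \PAut'[A';B']$ and computable sequences $\alpha, \beta \colon \N \to G$ such that items 2, 3, and 5 of Theorem~\ref{thm:Main} are simultaneously equivalent to the halting of the $n$th program. For any intermediate group $G'$ with $G \leq G' \leq \Homeo(A^\Z)$, asking whether $\alpha(n) = \beta(n)^g$ has a solution $g \in G'$ is sandwiched between item 2 (a conjugating involution already lives in $G \leq G'$ whenever the program halts) and item 5 (no homeomorphic conjugator exists at all when it does not), so it too is equivalent to halting. Instantiating with $G' = \langle F \rangle$, and separately with any f.g.\ supergroup of $\langle F \rangle$ in $\Homeo(A^\Z)$, yields the corollary. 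The nominal ``main obstacle'' is only the radius inspection in the second paragraph, but that verification is entirely routine; everything else is purely formal reuse of the previous corollary.
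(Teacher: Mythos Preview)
Your argument is correct and essentially follows the paper's template. The paper itself gives no separate proof here beyond saying ``the proof is the same as that of the previous one'', i.e.\ it cites \cite{Sa18a} directly for the explicit f.g.-universality of $\langle F\rangle$ (this is where the bounds $|A|=6$ and $|A|\geq 32$ come from) and then invokes Corollary~\ref{cor:Robust}. You instead establish the containment $\PAut'[A';B']\leq\langle F\rangle$ by the biradius-one check and recycle the previous corollary; this is a harmless detour that trades one \cite{Sa18a} citation for another.

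Two small remarks. First, your route gives a different reading of ``large enough'': you need $|A|$ composite with a factor of size at least three, whereas the paper's direct appeal to \cite{Sa18a} covers all $|A|\geq 32$, including primes. Since the statement is deliberately vague this does not matter, but it is worth being aware of. Second, when your auxiliary factor $C$ is nontrivial, the sandwich in your final paragraph uses item~5 of Theorem~\ref{thm:Main} with ambient space $A^\Z$, whereas the previous corollary supplies it for $(A'\times B')^\Z$. The cleanest fix is to note that $A^\Z$ explicitly simulates $(A'\times B')^\Z$ (put the $C$-coordinate into the information component $\i(a)$ of Definition~\ref{def:ExplicitSimulation}), so Theorem~\ref{thm:Main} applies directly with $X=A^\Z$ and the same $G$; alternatively just take $C$ trivial, which already suffices for the stated corollary.
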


At least $|A| = 6$ and any $|A| \geq 32$ are large enough by \cite{Sa18a}. As mentioned in the introduction, the fact that that there exists a f.g.\ subgroup of $\Aut(A^\Z)$ with $\Sigma^0_1$-hard conjugacy problem is not new: $F_2 \times F_2$ embeds into the automorphism group of a full shift \cite{KiRo90} and has such subgroups \cite{Mi68a}. However, by construction this cannot be used in Theorem~\ref{thm:Main}, since $F_2 \times F_2$ itself has a decidable conjugacy problem, and Corollary~\ref{cor:Robust} does not seem obtainable this way.


We also make explicit the subshift-free corollary:

\begin{corollary}
Let $C$ be a Cantor set. There exist $f, g \in \Homeo(C)$ such that every f.g.\ subgroup $F \leq \Homeo(C)$ containing $f$ and $g$ has undecidable conjugacy problem.
\end{corollary}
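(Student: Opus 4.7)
The plan is to deduce this from Corollary~\ref{cor:Robust} by packing a finitely generated subgroup into a 2-generated one inside $\Homeo(C)$. First I would apply Corollary~\ref{cor:Robust} to $X = A^\Z$ for some nontrivial finite alphabet $A$, identifying $A^\Z$ with $C$ via a fixed homeomorphism. This yields a finitely generated subgroup $G = \langle g_1, \ldots, g_n \rangle \leq \Aut(A^\Z) \leq \Homeo(C)$ such that the conjugacy problem is undecidable in every finitely generated $G'$ satisfying $G \leq G' \leq \Homeo(C)$.

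The next observation is that it suffices to exhibit $f, g \in \Homeo(C)$ with $G \leq \langle f, g \rangle$: any finitely generated $F \leq \Homeo(C)$ containing $\{f, g\}$ then contains $G$, so Corollary~\ref{cor:Robust} applied with $G' = F$ delivers the undecidability of conjugacy in $F$. The whole content of the statement therefore reduces to the following general fact about $\Homeo(C)$: every finitely generated (indeed, every countable) subgroup of $\Homeo(C)$ is contained in a 2-generated subgroup of $\Homeo(C)$.

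To prove this general fact I would use a Higman--Neumann--Neumann-style embedding realized concretely inside $\Homeo(C)$. Decompose $C$ as a disjoint union $C_0 \sqcup D$ of two clopen copies of the Cantor set, have $G$ act on $C_0$ as given and extend each $g_i$ by the identity on $D$. The spare room in $\Homeo(D)$ is used to house two new elements $s$ and $t$ implementing the relations of an abstract HNN extension in which every $g_i$ becomes a word in $s$ and $t$; then $G \leq \langle s, t \rangle$, so taking $f = s$ and $g = t$ finishes the argument.

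The hard part is turning the abstract HNN extension into honest homeomorphisms of $C$: the HNN construction demands a conjugating element in $\Homeo(C)$ identifying two prescribed isomorphic subgroups, and one must check that such an element really exists as a homeomorphism. This is where the homogeneity of $\Homeo(C)$ becomes essential: $\Homeo(C)$ carries any clopen partition of $C$ onto any other partition of the same combinatorial type, so any prescribed isomorphism between disjointly supported faithful copies of a subgroup can be realized by an actual conjugation in $\Homeo(C)$, which is exactly the freedom needed to implement the HNN relations.
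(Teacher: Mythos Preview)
The paper's proof takes a different and much shorter route: it simply quotes from \cite{Sa20a} the existence of a $2$-generated explicitly f.g.-universal subgroup $G \leq \Aut(A^\Z)$, and then the two generators of that $G$ are the desired $f, g$, by (the proof of) Corollary~\ref{cor:Robust}. No embedding trick inside $\Homeo(C)$ is needed at all.

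Your reduction to the general fact ``every finitely generated subgroup of $\Homeo(C)$ lies in a $2$-generated one'' is a legitimate alternative strategy, modulo the unspoken check that Corollary~\ref{cor:Robust} still applies once you replace $G$ by its extension-by-identity on the clopen piece $D$ (it does, since the obstruction of Lemma~\ref{lem:b} clearly survives disjoint union with an identity system, but this should be said). The genuine gap is in your justification of that general fact. You correctly flag the hard step---realizing the HNN stable letter as an honest homeomorphism---but the resolution you offer is wrong: the assertion that ``any prescribed isomorphism between disjointly supported faithful copies of a subgroup can be realized by an actual conjugation in $\Homeo(C)$'' is false. Conjugation in $\Homeo(C)$ preserves all dynamical invariants, so for instance two disjointly supported copies of $\Z$ acting with different topological entropies are abstractly isomorphic yet not conjugate in $\Homeo(C)$. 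Homogeneity of clopen partitions says nothing about conjugacy of group actions supported on them. In the HNN extension you actually need, one of the two associated subgroups involves the given $g_i$ and the other does not, so producing the required spatial conjugacy is precisely the nontrivial point, and your homogeneity remark does not address it.
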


\begin{proof}
This follows from the main theorem by taking any $2$-generated f.g.-universal $G$, since $A^\Z$ is a Cantor set. One is constructed in \cite{Sa20a}.
\end{proof}

It is easy to implement the natural action of our f.g.-universal groups inside the asynchronous rational group $Q$ defined in \cite{GrNe00} over the alphabet $A^2$, by unfolding $(A^2)^\omega$ into $A^\Z$ and acting through this interpretation. A stronger result is \cite[Theorem~3]{GrNe00}. By a standard encoding argument, we can do the same for any nontrivial alphabet. Since asynchronous rational transductions (with finite lag) are homeomorphisms of a Cantor space, we obtain an analog of the above corollary for this group.


If $\mathcal{C}, \mathcal{D}$ are properties of groups, say $G$ is \emph{($\mathcal{D}$-)eventually locally $\mathcal{C}$} 
if there exists a finitely-generated subgroup $H \leq G$ (with property $\mathcal{D}$), such that every finitely-generated $H' \leq G$ containing $H$ has property $\mathcal{C}$. Of course, if $G$ is finitely-generated, it is eventually locally $\mathcal{C}$ if and only if it is $\mathcal{C}$. We say \emph{$G$ has ($\mathcal{D}$-)eventually locally undecidable conjugacy problem} if it is ($\mathcal{D}$-)eventually locally $\mathcal{C}$ where $\mathcal{C}$ is the class of groups with undecidable conjugacy problem.

\begin{theorem}
Let $\mathcal{D}$ be the class of $2$-generated groups. The following groups have $\mathcal{D}$-eventually locally undecidable conjugacy problem:
\begin{itemize}
\item the automorphism group of any full shift,\footnote{or any subshift that explicitly simulates a full shift}
\item the asynchronous rational group over any alphabet, and
\item the homeomorphism group of Cantor space.
\end{itemize}
\end{theorem}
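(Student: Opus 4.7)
The plan is to reduce all three items to Corollary~\ref{cor:Robust} by exhibiting, in each ambient group $G$, a \emph{two}-generated subgroup $H$ that already contains a witness group $G_0$ of the type produced by that corollary. Then every finitely-generated $H' \leq G$ containing $H$ automatically contains $G_0$, and the conjugacy problem of $H'$ is undecidable via items (1)--(3) of Theorem~\ref{thm:Main}.

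First I would handle $\Aut(A^\Z)$. By \cite{Sa20a} there exists a $2$-generated explicitly f.g.-universal subgroup $H \leq \Aut(A^\Z)$. As noted in the discussion accompanying Corollary~\ref{cor:Robust}, the witness $G_0$ in that corollary can be taken to be any explicitly f.g.-universal group, so I would simply set $G_0 = H$. Then every f.g.\ $H' \leq \Homeo(A^\Z)$ with $H \leq H'$ has undecidable conjugacy problem, which in particular applies whenever $H' \leq \Aut(A^\Z)$. For a general subshift $X$ explicitly simulating a full shift, I would transport $H$ along the simulation to obtain a $2$-generated $\tilde H \leq \Aut(X)$ with the same robustness, exactly as in the proof of Corollary~\ref{cor:Robust}. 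For $\Homeo(C)$ with $C$ a Cantor set, the same $H$ works without modification, since $A^\Z$ is a Cantor space and $H \leq \Aut(A^\Z) \leq \Homeo(A^\Z)$; the inclusion hypothesis of Corollary~\ref{cor:Robust} is then tautological.

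For the asynchronous rational group $Q$, I would invoke the embedding $\Aut(A^\Z) \hookrightarrow Q$ sketched just before the theorem, namely unfolding $(A^2)^\omega \cong A^\Z$ and acting through this identification (extended to arbitrary nontrivial alphabets by a standard coding, or, more strongly, via \cite[Theorem~3]{GrNe00}). This produces a $2$-generated image $\tilde H \leq Q$. Since asynchronous rational transductions with finite lag are homeomorphisms of a Cantor space, every f.g.\ $H' \leq Q$ with $\tilde H \leq H'$ is a f.g.\ subgroup of $\Homeo(C)$ containing the image of $G_0$, and Corollary~\ref{cor:Robust} applies directly.

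The only genuine obstacle is the existence of a $2$-generated, \emph{explicitly} f.g.-universal subgroup of $\Aut(A^\Z)$; this is the role played by the construction of \cite{Sa20a}, without which one would be forced to take $H$ with more than two generators. Once that input is granted, the remaining work is organizational, with one subtle point to check: transporting along the explicit simulation and along the embedding into $Q$ must preserve both the explicit f.g.-universality (so that a concrete $G_0$ survives inside the image) and the fact that any f.g.\ overgroup sits inside $\Homeo$ of the ambient Cantor space. Both are automatic because the transports are realized by homeomorphisms, so the hypotheses of Corollary~\ref{cor:Robust} propagate unchanged.
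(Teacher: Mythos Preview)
Your proposal is correct and follows essentially the same route as the paper. The paper does not give a separate proof of this theorem; it is stated as a summary of the preceding corollaries and discussion (the Cantor-space corollary invoking the $2$-generated group of \cite{Sa20a}, and the paragraph on embedding into the asynchronous rational group via unfolding $(A^2)^\omega \cong A^\Z$), and you have assembled exactly those pieces in the intended way.
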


It seems likely that there is an easier proof of this result in the case of the Cantor space, but we have not found this statement in the literature. 

\section{Definitions and conventions}
\label{sec:Definitions}

Our conjugation notation is $h^g = g^{-1}hg$, and commutators are $[g, h] = ghg^{-1}h^{-1}$. We include $0 \in \N$. Everything is $0$-indexed by default. For $a, b \in \Z$ we define $\llb a, b \rrb = [a,b] \cap \Z$ (where $[a,b]$ denotes the real interval). An (always finite) alphabet $A$ is a discrete set; it is \emph{nontrivial} if $|A| \geq 2$. We assume some knowledge of group theory and recursion theory, and we refer to \cite{ArBa09} for definitions of complexity theoretic classes (P, co-NP, PSPACE). We write $\langle \cdots \rangle$ to denote the smallest group containing $\cdots$, which, abusing notation, can include both elements and sets (whose elements we include).

A full shift $A^G$ (always with Cantor topology and finite alphabet $A$) is \emph{nontrivial} if the alphabet $A$ has cardinality at least $2$. A \emph{factor} of a $G$-system is a shift-commuting continuous image. Subshifts are, as defined in the introduction, closed $G$-invariant subsets of $A^G$. Sometimes it is helpful to think of subshifts more abstractly as expansive actions on zero-dimensional compact metrizable spaces (\cite{LiMa95}).

We write dynamical $\Z$-systems also as $(X, f)$ where $f : X \to X$ is a homeomorphism whose iterations give the action. For one-dimensional subshifts $A^\Z$ we use the $\Z$-action given by the homeomorphism $\sigma(x)_i = x_{i+1}$ (this is the natural action of $-i$ in the sense of the general definition). For $\Z^2$-actions, we also write $(X, f, g)$ where $f, g : X \to X$ are the generators.

For a finite alphabet $A$ write $A^*$ for the set of all \emph{words} over $A$, i.e.\ the free monoid generated by $A$ (including the empty word $\epsilon$). We write word concatenation as literal concatenation $(u,v) \mapsto uv$. A \emph{language} is a subset of $A^*$. \emph{Regular expressions} over alphabet $A$ are a convenient language for expressing languages over $A$. Each $a \in A$ is itself a regular expression denoting the language $\{a\}$, and for languages $K,L$ write $KL = \{uv \;|\; u \in K, v \in V\}$, $L^* = \{\epsilon\} \cup \{w_1w_2 \cdots w_k \;|\; \forall i: w_i \in L\}$, and $K+L = K \cup L$. The empty language is denoted by $\emptyset$.

We extract words from $x \in A^\Z$ by $x_{\llb a, b \rrb} = w$ where $w_i = x_{a+i}$ for applicable $i$. \emph{The language} of a one-dimensional subshift $X \subset A^\Z$ is the set of words $w \in A^*$ such that for some $x \in X$ we have $x_{\llb i,i+|w|-1 \rrb} = w$ for some $i$. The language of a point $x \in X$ is the language of the subshift $\overline{O(x)}$. We say a word $w$ \emph{occurs} in $x \in A^\Z$ or $x$ \emph{contains} $w$ if $w$ is in the language of $x$; a subshift $X$ contains $w$ if a point of it does.

When dealing with words over product alphabets $w \in (A \times B)^n$ or configurations over such $x \in (A \times B)^\Z$, we always (often implicitly) use the natural bijection $(A \times B)^N \cong A^N \times B^N$ that exists for any set $N$. This should not cause any confusion. We refer to the first element of $A^N \times B^N$ as the (contents of the) \emph{top track}, and the second one is the \emph{bottom track}. It is useful to think of the tape as arranged geometrically on $\Z \times \{0,1\}$ with $\Z$ being the horizontal axis and $\{0,1\}$ laid vertically, $1$ on top, with the $1$-components containing the elements of $A$ and the $0$-components those of $B$.

As defined in the introduction, the \emph{automorphisms} of a $G$-system are its $G$-commuting self-homeomorphisms. The automorphisms $f : X \to X$ of a subshift $X \subset A^\Z$ have a \emph{radius} $r \in \N$ and a \emph{local rule} $F : A^{2i+1} \to A$ such that $f(x)_i = F(x_{\llb i-r, i+r \rrb})$ for all $x \in X$ and $i \in \Z$. The \emph{biradius} is $r$ if both $f$ and $f^{-1}$ have radius $r$. When $X = A^\Z$ we also call automorphisms \emph{reversible cellular automata} or \emph{RCA}.

Two particular types of RCA are the \emph{partial shifts} $\sigma_1(x,y) = (\sigma(x), y)$ and $\sigma_2(x,y) = (x, \sigma(y))$ and \emph{symbol permutations} $f(x)_i = \pi(x_i)$ where $\pi \in \Sym(A)$. The group $\CPAut_k[A;B]$ is the subgroup of $\Aut(((A \times B)^\Z, \sigma^k))$ obtained from the partial shifts $\sigma_i$ with respect to $\sigma$, and symbol permutations with respect to $\sigma^k$, i.e.\ the group generated by the set
\[ T = \{\sigma_1, \sigma_2\} \cup \{f_{\pi} \;|\; \pi \in \Sym((A \times B)^k)\}, \]
where $f_{\pi}(x)_{\llb ki,ki+n-1 \rrb} = \pi(x_{\llb ki,ki+n-1 \rrb})$.

A set of words $W \subset A^*$ for a finite alphabet $A$ is \emph{mutually unbordered} if $uw = w'v$ for $w, w' \in W$ implies that either $|u| \geq |w'|$ or $|u| = |v| = 0$ (and thus $w = w'$). A \emph{marker automorphism} is determined by a mutually unbordered set of words $W \subset A^n$ and some $\pi \in \Sym(W)$, and is the unique automorphism $g \in \Aut(A^\Z)$ such that $g(x)_i = \pi(w)_j$ if $x_{\llb i-j,i-j+n-1 \rrb} = w \in W$ for some $j < n$, and $g(x)_i = x_i$ when such $w$ does not exist. It is easy to see that arbitrarily large mutually unbordered sets $W \subset A^n$ exist, when $A$ is nontrivial. A word $w$ is \emph{unbordered} if $\{w\}$ is mutually unbordered. Say a word $w \in A^*$ has \emph{period} $n$ if $w_i = w_{i+n}$ for all applicable $i$ (possibly none).  The length of a word is always a period of it, and a word is unbordered if and only if its minimal period is its length.

For $u \in A^*$, a \emph{cylinder} is $[u]_i = \{x \in A^\Z \;|\; x_{i+j} = u_j \mbox{ for all applicable $j$}\}$. Write also $[u]_0 = [u]$ and for $W \subset A^n$ write $[W]_i = \bigcup_{w \in W} [w]_i$. The topology of $A^\Z$ has the basic cylinders $[a]_i, a \in A, i \in \Z$, as subbasis. A clopen set $D \subset X$ is \emph{$n$-aperiodic} if $D \cap \sigma^i(D) \neq \emptyset \implies i = 0 \vee |i| \geq n$. If $W \subset A^n$ is mutually unbordered, $[W]$ is $n$-aperiodic. Recall that the clopen sets of the Cantor space $A^\Z$ are just the finite unions of cylinders. We say a clopen set $C \subset A^\Z$ \emph{occurs} at $i$ in $x$ if $\sigma^i(x) \in C$; the intuition is that one of the words defining $C$ as a union of cylinders occurs near $i$ (with the correct offset).

To avoid confusion between Turing machines as dynamical systems, Turing machines as a model of computation, and Turing machines as elements of groups of Turing machines in the sense of \cite{BaKaSa16}, we will say \emph{$n$th program halts} to refer to instances of the halting problem, use the term \emph{one-head machine} to refer to Turing machines as dynamical systems, and always mention \cite{BaKaSa16} when using this word in the third sense.

We assume some familiarity with Turing machines/one-head machines as dynamical systems \cite{Ku97}, in particular the moving-head and moving-tape models. All machines studied in this paper are ``complete'' and ``reversible''. As a quick recap, in the moving-head model, a \emph{(reversible) one-head machine} with state set $Q$ and tape alphabet $S$ is a particular type of self-homeomorphism of the Cantor space $(Q \times \Z \times S^\Z) \cup S^\Z$. In the topology, $(q,i,x)$ is close to $x$ whenever $|i|$ is large and $S^\Z$ is the compactification of $Q \times \Z \times S^\Z$ where $|i| \rightarrow \infty$; one should think of a \emph{head} $q \in Q$ as being in coordinate $i \in \Z$, on top of the tape symbol $x_i$. The tape is never shifted, and rather the head moves around on the tape and may modify the symbol under it; this action can depend on the current value of $Q$ and the symbol $x_i$. In the moving-tape model, the same machine acts on $Q \times S^\Z$, and movement of the head translates into shifting the tape.

As the specific model of one-head machine one can take the notion of \emph{complete RTM} from \cite{KaOl08}, so a one-head machine is a triple $(Q,S,T)$ where $T \subset (Q \times \{-1,1\} \times Q) \cup (Q \times S \times Q \times S)$, satisfying some axioms. We omit the precise definitions, because in the only proof where we need to discuss the specifics of the machine's behavior (item three in Lemma~\ref{lem:Mn}), we only make minor modifications to the construction of \cite{KaOl08} and only need some high-level properties of its (dynamical) behavior. (Alternatively, one could take the more general and dynamically natural definition from \cite{Mo91,BaKaSa16}; this is what the title of this paper refers to.)

\subsection{Explicit simulation of full shifts}
\label{sec:ExplicitSimulation}

Essentially all known embeddings between automorphism groups of subshifts are performed by ``explicit simulation''. It is best to understand what this means on an I-know-it-when-I-see-it basis, through reading any of the embedding proofs (e.g.\ \cite{KiRo90,Sa18d}), all of which involve explicit markers representing the symbols of another subshift (full shift). For completeness, we give a formal definition. We restrict to simulation of one-dimensional subshifts by subshifts over any group. One could easily extend this to simulation of subshifts on groups, and to zero-dimensional (not necessarily expansive) systems.

\begin{definition}
\label{def:ExplicitSimulation}
Let $G$ be a group, and let $X \subset C^G$ and $Y \subset B^\Z$ be subshifts. We say $X$ \emph{explicitly simulates} $Y$ if $X$ is conjugate to a subshift $X' \subset A^G$ such that the statements in the following four paragraphs hold:

To each $a \in A$ is associated a number $m(a) \in \N$, a word $w(a) \in B^{m(a)}$, an element $v(a) \in (G \times \N)^{m(a)}$, and some additional piece of information $\i(a) \in I$ (for some finite set $I$). Furthermore, $a$ is uniquely determined by the tuple $(m(a), w(a), v(a), \i(a))$. To simplify the following discussion, we identify $A$ directly by its image and assume $A \subset \N \times B^* \times (G \times \N)^* \times I$ (of course, finite).

To each $x \in X'$ we can associate a directed graph by taking nodes $V(x) = \{ (g, j) \;|\; g \in G, j < m(x_g) \}$, and for each $(g, j) \in V(x)$ with $v(x_g)_j = (h, j')$, adding an edge from $(g, j)$ to $(gh, j')$ (so we must have $(gh, j') \in V(x)$). If $(g,j) \in V(x)$, then the \emph{weakly connected component} of $(g,j)$ in this graph, i.e.\ the connected component in the undirected graph obtained by forgetting the edge directions, is isomorphic to either $\Z$ or a finite cycle (this is equivalent to every node having in-degree one). Further, if (read cyclically) this path is $\cdots, (g_{-1},j_{-1}), (g_0, j_0), (g_1, j_1), \cdots$, then defining $y \in B^\Z$ by $y_k = w(x_{g_k})_{j_k}$ we have $y \in Y$. We call this the \emph{simulated configuration at $(g, j)$ (on $x$)}, and if the cycle is finite we call its cardinality its \emph{structural period}, otherwise the structural period is $0$.

We require that the simulated configuration can be changed arbitrarily: suppose $x \in X'$, $(g,j) \in V(x)$, and the simulated configuration at $(g,j)$ is $y \in Y$, and the $(g_k, j_k)$ are as in the previous paragraph. If the structural period is zero, then for any $y' \in Y$, we have $x' \in X'$ where $x'$ is defined by replacing $w(x_{g_k})_{j_k} = y_k$ by $y'_k$ for all $k \in \Z$. If the structural period is $p \geq 1$, then we require the same for any $p$-periodic point $y' \in Y$.

Finally, we require that every configuration of $Y$ is simulated: for any $y \in Y$, there must exist $x \in X'$ and $(g,j) \in V(x)$ such that the simulated configuration at $(g,j)$ is $y$.
\end{definition}

In the presence of others, for the last requirement it suffices that some $y \in Y$ is simulated without a structural period.

In \cite{Sa18d}, it is proved that if $X$ is an uncountable sofic shift, then $\Aut(A^\Z) \leq \Aut(X)$ for any finite alphabet $A$. In the proof, two symbols of a full shift are simulated between two unbordered words at a particular distance from each other, and when the encoding breaks, we wrap the simulated configuration into a conveyor belt. From the proof, one easily obtains the following:

\begin{lemma}
\label{lem:UncountableSimulatesFull}
Every uncountable sofic shift explicitly simulates every full shift.
\end{lemma}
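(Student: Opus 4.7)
The plan is to extract the explicit simulation data directly from the embedding construction of \cite{Sa18d}. That argument begins by locating inside (a conjugate of) the uncountable sofic shift $X$ a pair of long mutually unbordered words $u, v \in A^\ell$ together with a period $L$ such that $X$ realizes all configurations in which $u$ and $v$ alternate along consecutive blocks of length $L$, with independent content on a ``simulation track'' between consecutive markers. Between each aligned pair of markers there are unconstrained simulation slots, while partial or misaligned marker patterns at the boundary of a maximal valid region are absorbed by wrapping into a \emph{conveyor belt}, so that the bi-infinite or finite chains of slots are exactly the $\Z$-orbits and finite cycles required by Definition~\ref{def:ExplicitSimulation}.

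First I would replace $X$ by a conjugate $X' \subset \tilde A^{\Z}$ under a suitable higher-block recoding, so that each symbol of $\tilde A$ locally encodes (i) whether its position lies inside a marker, inside a simulation slot, or in a ``garbage'' region, (ii) its offset inside that marker/slot, (iii) the types of the adjacent markers (when relevant), and (iv) for simulation positions, one symbol of the alphabet $B$ of the full shift $Y$ to be simulated. For any finite $B$ this fits into the recoding by choosing the slot length to comfortably carry a $B$-symbol verbatim, which is permitted because slot content is unconstrained in $X$. Having done this, each $a \in \tilde A$ canonically determines a tuple $(m(a), w(a), v(a), \i(a))$: for a simulation symbol I set $m(a)=1$, $w(a)$ the encoded $B$-symbol, and $v(a) = ((h,0))$ where $h \in \Z$ is the displacement to the next simulation position (which is either the next cell of the same slot or a jump across the next marker, both computable from the local data); for marker and garbage symbols I set $m(a)=0$, making them contribute no nodes to $V(x)$.

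With these definitions, the clauses of Definition~\ref{def:ExplicitSimulation} become a matter of reading off the construction. Every node of $V(x)$ has in- and out-degree exactly one because the displacement map is a bijection between consecutive simulation positions, ultimately a consequence of mutual unborderedness of $u,v$, which pins down the marker locations deterministically together with the slot structure around them. Hence every weakly connected component is either a $\Z$-line or a finite cycle, the latter arising exactly from the conveyor-belt closure of a finite valid region. The arbitrary-modification clause follows because along an infinite line the $B$-symbols are free by construction, and along a conveyor belt of $p$ slots the only imposed constraint on the simulated tape is $p$-periodicity, which matches Definition~\ref{def:ExplicitSimulation} exactly. Realization of every $y \in Y$ uses the existence in $X$ of points where the marker pattern extends to $\pm\infty$ with all slots simultaneously filled with any prescribed content, a standard feature of the full-shift subsystem produced in \cite{Sa18d}.

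The main point I expect to have to verify carefully is the arbitrary-modification clause on the finite cycles: one must ensure that conveyor belts of every structural period $p$ arising from the construction support every $p$-periodic $y' \in Y$. The proof from \cite{Sa18d} already provides such closures for all sufficiently long valid regions; the short exceptional periods are handled by choosing the initial block length $L$ (and thus the minimal conveyor-belt length) large enough at the outset, so that no ``small'' structural periods appear. The remainder is bookkeeping over the high-block recoding, and requires no new dynamical input beyond what \cite{Sa18d} already extracts from uncountability of $X$.
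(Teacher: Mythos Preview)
Your overall strategy---recode $X$ to a conjugate $X'$ in which the marker/slot/garbage structure is visible symbol-by-symbol, then read off $(m,w,v,\i)$---is exactly the paper's. But the specific choice $m(a)=1$ for simulation symbols cannot implement the conveyor-belt closure, and this is precisely the place where Definition~\ref{def:ExplicitSimulation} has teeth.

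With $m(a)=1$ every node of $V(x)$ has the form $(g,0)$, and the outgoing edge from $(g,0)$ lands at $(g+h,0)$ where the displacement $h$ is determined by the single symbol $x_g\in\tilde A$. Since $\tilde A$ is finite, only a bounded set of displacements is available. Now take a finite maximal valid region with simulation positions $i_1<\cdots<i_k$. For the weakly connected component through these nodes to be a finite cycle (which the definition requires: every node must have in-degree one), some symbol must carry the wrap-around edge $(i_k,0)\to(i_1,0)$ of displacement $i_1-i_k$. But $k$ is unbounded over points of $X'$, so no finite alphabet can store this. The component is a finite path, not a cycle, and the definition fails. Your last paragraph worries about \emph{short} structural periods, but the obstruction is at the other end: arbitrarily \emph{long} finite regions.

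The paper's remedy, and the reason the conveyor belt exists in this formalism, is to take $m(a)=2$: each simulation symbol records a pair $(d,d')\in B^2$, giving two nodes $(g,0)$ and $(g,1)$ per cell (top and bottom track). The $0$-track runs rightward and the $1$-track leftward; at the right end of a finite region one sets $v(a)_0=(0,1)$ to hop to the same cell's bottom track, and symmetrically on the left. All displacements are then bounded by the inter-marker spacing, and a finite region of $k$ cells yields an honest cycle of length $2k$. With this single change your outline goes through essentially as written.
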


\begin{proof}
Let the sofic shift $X$ be over alphabet $C$, and let $D$ be the alphabet of the full shift we want to simulate. We pick the alphabet of $X'$ to be $A \supset C$, and set $m(a) = \i(a) = 0$ for all $a \in C$. The conjugacy from $X$ to $X'$ typically preserves the symbol $a \in A$.

Besides $C$, in $A$ we have symbols $\#$ and $D^2$. Following the proof from \cite{Sa18d} (or \cite{KiRo90}), between unbordered words such that the word in between codes some $(d,d') \in D^2$, we replace it by a word of the form $(d,d')\#^*$. Clearly this is a conjugacy, as we can return $(d,d')\#^*$ back to the coding word by a local rule. We set $m(\#) = \i(\#) = 0$, and $m((d,d')) = 2, w((d,d')) = (d,d')$.

The vector $v((d,d'))$ is determined by detecting whether the sequence of coding words continues: if it continues to the right, set $v((d,d'))_0 = (t, 0)$ where $t$ is the distance between coding segments (to continue to the neighbor on the right, on the $0$-track), and if it does not continue set $v((d,d'))_0 = (0, 1)$ (to connect to the same cell's $1$-track). The value of $v((d,d'))_1$ is determined similarly, but replacing $t$ by $-t$ and exchanging the roles of the tracks.
\end{proof}

The construction in \cite{Ho10} shows that every $\Z^d$-SFT with positive entropy and dense minimal points explicitly simulates every (one-dimensional) full shift, for any $d \geq 1$.

The importance of explicit simulation of another subshift is that it allows explicit simulations of automorphisms: Let $G$ be a group, and let $X \subset C^G$ and $Y \subset B^\Z$ be subshifts. Let $M \leq \End(Y)$ and $N \leq \End(X)$ be monoids. If $X$ explicitly simulates $Y$, we can apply each $h \in M$ on simulated configurations in an obvious way (through the conjugate view $X'$, from the definition of explicit simulation). This construction maps the identity map to the identity map, and thus maps automorphisms to automorphisms.

\begin{definition}
Any embedding $\phi : M \to \End(X)$ with $\phi(M) \subset N$ of the kind described in the previous paragraph (for two submonoids $M, N$ of endomorphism monoids of subshifts) is called an \emph{explicit embedding} of $M$ inside $N$.
\end{definition}

We usually just write that $\phi : M \to N$ is an explicit embedding in this case.

We say a group $G \leq \Aut(A^\Z)$ is \emph{f.g.-universal} if every f.g.\ subgroup $H \leq \Aut(A^\Z)$ embeds into $G$. In \cite{Sa18a} it is shown that such groups exist. The embeddings constructed in the proof (see \cite[Lemma~7]{Sa18a}) are of the explicit kind: We have two tracks, and under each unbordered words of length $24r$ on the top track, we simulate four $6r$-length subwords of a full shift, with similar wrapping behavior as above. As in the proof of Lemma~\ref{lem:UncountableSimulatesFull}, one can conjugate this to an explicit simulation by explicitly marking the graph structure of the conveyor belts. Furthermore, the construction is algorithmic (as noted in \cite{Sa18a}), and we obtain the following from \cite[Lemma~6~and~7]{Sa18a}:

\begin{lemma}
\label{lem:GDerp}
Let $X = A^\Z$ for a nontrivial alphabet $A$. Then there exists a f.g.\ group $G \leq \Aut(X)$ such that for any full shift $B^\Z$, and any f.g.\ group $H \leq \Aut(B^\Z)$, there is a computable explicit embedding $H \leq G$.
\end{lemma}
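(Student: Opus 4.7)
The plan is to invoke directly the construction of an f.g.-universal group from \cite[Lemma~6~and~7]{Sa18a} and verify that the embedding produced there can be cast as an explicit embedding in the sense of Definition~\ref{def:ExplicitSimulation}. Concretely, I would first quote the existence of a single f.g.\ group $G_0 \leq \Aut(A^\Z)$ into which every f.g.\ $H \leq \Aut(B^\Z)$ embeds, together with the computability statement asserting that on input $H$ (given by a finite set of local rules) one can compute a finite set of words over the generators of $G_0$ realizing the images of the generators of $H$.

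Next I would recall the shape of the embedding used there. Elements of $G_0$ act on a two-track alphabet, and the key device is: fix a sufficiently large radius $r$; on the top track, mark positions carrying unbordered words of length $24r$; between two consecutive such markers, the bottom track carries four disjoint $6r$-blocks interpreted as symbols of the simulated full shift $B^\Z$; when the marker pattern breaks, the four blocks are joined into a ``conveyor belt'' that cyclically wraps the simulated configuration. The automorphisms in the image act only on these blocks, driven by their local rules. This already provides an embedding $H \hookrightarrow G_0$ whose image respects the simulation, but the simulation is not yet explicit in the formal sense, because the graph structure of the conveyor belt is only implicit in the positions of markers rather than explicitly recorded symbol-by-symbol.

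To pass to an explicit embedding, I would conjugate $A^\Z$ to a subshift $X' \subset A_1^\Z$ by enlarging the alphabet so that each ``simulated cell'' carries, as an extra coordinate $\i(a) \in I$, a label indicating its role (block position, marker, filler) and its vector $v(a) \in (\Z \times \N)^{m(a)}$ points explicitly to the next cell in the conveyor belt, exactly as in the proof of Lemma~\ref{lem:UncountableSimulatesFull}. Since the conveyor belt structure is locally computable from the marker positions, this relabeling is a topological conjugacy of $\Z$-subshifts, and the image of each automorphism in $G_0$ transports to an automorphism of $X'$ that by construction acts by the simulated local rule on the $w(a)$-components and leaves $m, v, \i$ untouched. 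This is precisely the explicit embedding required by the definition.

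Finally, the algorithmic content reduces to the observation, already in \cite{Sa18a}, that given a local rule for a generator of $H$ one can compute a word in the generators of $G_0$ realizing its action on the simulated blocks (the construction is just a bounded number of marker moves, partial shifts and symbol permutations, whose composition depends in an effective way on the local rule). The one step that requires care is the verification of the weak-connectedness and periodicity clauses of Definition~\ref{def:ExplicitSimulation}: I would check that every cell of $X'$ lies on a unique maximal $v$-path, that infinite paths simulate arbitrary configurations of $B^\Z$ and finite cycles simulate exactly the periodic points whose period divides the cycle length, using that the conveyor belts have arbitrary prescribed lengths and that changing the contents of the $w(a)$-coordinates along a path remains in $X'$. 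This last coherence check between the marker combinatorics of \cite{Sa18a} and the formal requirements of Definition~\ref{def:ExplicitSimulation} is the only non-trivial step; everything else is bookkeeping on top of the cited result.
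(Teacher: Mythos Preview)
Your proposal is correct and follows essentially the same route as the paper: invoke the f.g.-universal group and its computable embeddings from \cite[Lemmas~6~and~7]{Sa18a}, describe the conveyor-belt simulation under unbordered words of length $24r$ on the top track, and then conjugate to record the graph structure explicitly in the alphabet as in the proof of Lemma~\ref{lem:UncountableSimulatesFull}. The paper in fact states the lemma without a formal proof block, giving exactly this argument in the paragraph preceding the statement; your write-up is slightly more detailed in checking the clauses of Definition~\ref{def:ExplicitSimulation}, but the content is the same.
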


We call such groups $G$ \emph{explicitly f.g.-universal}. A subtlety in this result is that while $G$ and $X$ stay fixed as $H$ ranges over f.g.\ subgroups of $\Aut(B^\Z)$, the conjugate subshift $X'$ in the definition of explicit simulation depends on the subgroup $H$.

In the following lemma, the first item is true by definition. The second and third take a bit of work, but we omit the proofs.

\begin{lemma}
\label{lem:Transitive}
Let $X,Y,Z$ be $\Z$-subshifts. Let $G \leq \Aut(X), H \leq \Aut(Y), K \leq \Aut(Z)$.
\begin{itemize}
\item If $H$ explicitly embeds into $G$ and $H' \leq H$, then $H'$ explicitly embeds in~$G$.
\item If $X$ explicitly simulates $Y$ and $Y$ explicitly simulates $Z$, then $X$ explicitly simulates $Z$.
\item If $K$ explicitly embeds into $H$ and $H$ explicitly embeds into $G$, then $K$ explicitly embeds into $G$,
\end{itemize}
\end{lemma}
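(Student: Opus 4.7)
The first claim is essentially immediate from the definitions: given $\phi : H \to \End(X)$ witnessing the explicit embedding of $H$ into $G$ via a simulation of some subshift $Y$ on which $H$ acts, I restrict $\phi$ to $H' \leq H$ and observe that $\phi(H') \subseteq \phi(H) \subseteq G$ and that the action is still ``apply an element of $H'$ to simulated $Y$-configurations,'' so every requirement of an explicit embedding is inherited verbatim.

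For the second claim, I fix $X \cong X' \subseteq A^G$ explicitly simulating $Y$ via $(m_X, w_X, v_X, \i_X)$ and $Y \cong Y'' \subseteq A_Y^\Z$ explicitly simulating $Z$ via $(m_Y, w_Y, v_Y, \i_Y)$, and aim to conjugate $X'$ to some $\hat X \subseteq \hat A^G$ on which the composed $X \to Z$ simulation becomes local. The main obstacle will be the locality of the composed $v$-data: a single $v_Y$ offset generically corresponds to several belt steps in $v_X$ and thus reads several $X'$-symbols, whereas the composed data must depend on one symbol of $\hat A$ only. Because $A$ and $A_Y$ are finite, $v_X$ and $v_Y$ take only finitely many values, so the set of $G$-positions reached from any $g$ within a uniformly bounded number of belt steps lies in $gS$ for a fixed finite set $S \subseteq G$. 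I pass to a higher-block presentation $\hat X$ of $X'$ in which each symbol $\hat a$ at $g$ records the $X'$-symbol at every position in $gS$ (including $g$ itself); this is a genuine topological conjugacy. Using the stored neighborhood I also apply the conjugacy $Y \to Y''$ cell-by-cell along each belt --- well-defined on both $\Z$-indexed and cyclic belts because conjugacies preserve periodic points --- and record the resulting $Y''$-letter $y_j$ at every belt position $(g, j)$ in $\hat a$. Then I set $\hat m(\hat a) = \sum_{j < m_X(a)} m_Y(y_j)$, take $\hat w(\hat a)$ to be the concatenation of the words $w_Y(y_j)$, and compute each $\hat v(\hat a)_{(j, \ell)}$ by reading $v_Y(y_j)_\ell = (t, \ell')$ and resolving the $t$-step belt move through the stored neighborhood; this yields a definite element of $G \times \N$ depending only on $\hat a$. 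The weakly connected components of the resulting graph then coincide with the $Z$-conveyor belts nested inside the $Y''$-belts of $x$, so they are $\Z$-indexed or cyclic, and the surjectivity and change requirements transfer from the two input simulations by realizing a $Z$-change first at the $Y$-level and then lifting it to $X$.

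The third claim I deduce by combining the other two. Given explicit embeddings $\phi_1 : K \to H$ (coming from a simulation $Y \to Z$) and $\phi_2 : H \to G$ (from $X \to Y$), the second claim supplies an explicit simulation $X \to Z$. Under this composed simulation, the map $\phi_2 \circ \phi_1$ acts on $X$ by applying $k$ to the composed $Z$-substructure, because $\phi_1(k)$ modifies each simulated $Y$-configuration exactly by applying $k$ to its $Z$-substructure, and $\phi_2$ lifts this modification back to $X$. Hence $\phi_2 \circ \phi_1$ is an explicit embedding of $K$ into $G$, as required.
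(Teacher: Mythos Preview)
The paper explicitly omits the proof of this lemma: immediately before the statement it says ``the first item is true by definition. The second and third take a bit of work, but we omit the proofs.'' So there is nothing to compare against at the level of argument; your task was to supply what the paper left out.

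Your treatment of the first item matches the paper's one-line assessment. For the second item, your higher-block strategy is the natural one and is correct in outline: since $A$, $A_Y$, $v_X$, $v_Y$ are all finite and the conjugacy $Y \to Y''$ has finite radius, a fixed finite window $S \subset \Z$ suffices to make the composed $(\hat m, \hat w, \hat v)$ depend only on the $\hat X$-symbol, and in-degree-$1$ for the composed graph follows from the same property at both levels. One place where you are a bit quick is the \emph{change} requirement in the finite-cycle case: if the $X$-belt has structural period $p$, the simulated $Y''$-configuration is $p$-periodic, and a nested $Z$-belt has some structural period $q$; after replacing by an arbitrary $q$-periodic $z' \in Z$ you need the modified $Y''$-configuration to remain $p$-periodic so that you may invoke the $X$-level change property. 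This does hold (the $p$-shift of $Y''$ permutes the $Z$-belts and fixes each one setwise, inducing a $\sum$-of-$m_Y$-step rotation on the $q$-cycle, so $q$ divides that sum and the new $Y''$ inherits $p$-periodicity), but it deserves a sentence. Your deduction of the third item from the second via $\phi_2 \circ \phi_1$ is fine.
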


Combining these lemmas, we obtain the following:

\begin{lemma}
\label{lem:G}
Let $X$ be any subshift that explicitly simulates a nontrivial full shift. Then there exists a f.g.\ group $G \leq \Aut(X)$ with co-NP word problem, such that for any full shift $B^\Z$, and any f.g.\ group $H \leq \Aut(B^\Z)$, there is a computable explicit embedding $\phi : H \to G$.
\end{lemma}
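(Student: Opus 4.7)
The plan is to bootstrap the explicitly f.g.-universal group inside $\Aut(A^\Z)$ provided by Lemma~\ref{lem:GDerp} up into $\Aut(X)$, using the explicit simulation hypothesized on $X$, and then invoke transitivity.

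First I would fix a nontrivial full shift $A^\Z$ that $X$ explicitly simulates (such an $A^\Z$ exists by hypothesis). Apply Lemma~\ref{lem:GDerp} to obtain a finitely-generated, explicitly f.g.-universal $G_0 \leq \Aut(A^\Z)$: every f.g.\ $H \leq \Aut(B^\Z)$ admits a computable explicit embedding $\phi_0 : H \to G_0$. Next, the explicit simulation of $A^\Z$ by $X$ yields, as described in the paragraph following Definition~\ref{def:ExplicitSimulation}, a canonical explicit embedding $\psi : \Aut(A^\Z) \to \Aut(X)$ (acting on simulated configurations through the conjugate view $X'$). Define
\[ G = \psi(G_0) \leq \Aut(X). \]
Since $\psi$ is a homomorphism and $G_0$ is finitely generated, so is $G$, with generators the $\psi$-images of the generators of $G_0$.

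For the universality property, given f.g.\ $H \leq \Aut(B^\Z)$, let $\phi_0 : H \to G_0$ be the computable explicit embedding from Lemma~\ref{lem:GDerp}. The composition $\phi = \psi \circ \phi_0 : H \to G$ is then a computable explicit embedding: transitivity of explicit embeddings is exactly the third bullet of Lemma~\ref{lem:Transitive}, and computability is preserved under composing a computable map with the fixed simulation $\psi$. This produces the desired embedding into $G$.

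Finally, for the word problem bound, since $\psi$ is injective and sends the chosen generating set of $G_0$ to the chosen generating set of $G$, the two groups have the same word problem with respect to these generators. As $G_0$ is a finitely-generated group of RCA inside $\Aut(A^\Z)$, its word problem lies in co-NP by Theorem~\ref{thm:WP}, so the same holds for $G$. The only real subtlety of the argument is ensuring that the composition of an explicit embedding with an explicit simulation still fits the formal definition of explicit embedding (one must keep track of the nested marker/conveyor-belt structure coming from $X'$ on top of the simulation data used inside $G_0$); this is precisely what the transitivity clauses of Lemma~\ref{lem:Transitive} encapsulate, so the hard bookkeeping is already done.
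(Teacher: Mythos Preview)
Your proposal is correct and follows essentially the same route as the paper: take the explicitly f.g.-universal $G_0 \leq \Aut(A^\Z)$ from Lemma~\ref{lem:GDerp}, push it into $\Aut(X)$ via the explicit simulation, and compose embeddings using Lemma~\ref{lem:Transitive}, with the co-NP bound inherited from Theorem~\ref{thm:WP}. The only cosmetic difference is that the paper first notes (via Lemma~\ref{lem:UncountableSimulatesFull} and transitivity) that $X$ simulates \emph{every} nontrivial full shift before choosing $A^\Z$, whereas you directly use the $A^\Z$ given by hypothesis---your shortcut is valid since Lemma~\ref{lem:GDerp} applies to any nontrivial alphabet.
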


\begin{proof}
Since $X$ simulates a nontrivial full shift, it simulates every nontrivial full shift by Lemma~\ref{lem:UncountableSimulatesFull} and the second item of Lemma~\ref{lem:Transitive}. In particular, it simulates $A^\Z$ such that Lemma~\ref{lem:GDerp} applies, and there is an explicit embedding $\Aut(A^\Z) \leq \Aut(X)$. By Lemma~\ref{lem:GDerp}, there exists a f.g.\ group $G' \leq \Aut(A^\Z)$ such that for any full shift $B^\Z$, and any f.g.\ group $H \leq \Aut(B^\Z)$, there is a computable explicit embedding of $H$ into $G'$.

From the first item of Lemma~\ref{lem:Transitive} we then obtain that $G'$ explicitly embeds into $\Aut(X)$, and taking $G$ to be the image of this embedding $G$ has word problem in co-NP (because $G' \cong G$ does, by Theorem~\ref{thm:WP}). From the third item we then obtain that for any full shift $B^\Z$, and any f.g.\ group $H \leq \Aut(B^\Z)$, $H$ explicitly embeds into $G$, as required, and this embedding is computable, since group-theoretically it is the same embedding as that of $H$ into $G'$ (i.e.\ the decomposition into generators is the same).
\end{proof}

\section{Conjugacy of reversible cellular automata}

\subsection{The main construction}

In this section we construct the automorphisms $\alpha(n)$ and $\beta(n)$ used to prove the main theorem (Theorem~\ref{thm:Main} from the introduction). The basic construction is done on a full shift (whose alphabet depends on $n$). First, we recall from \cite{KaOl08} that periodicity of (reversible) one-head machines is undecidable. We make some modifications to the construction, and sketch the proof of a dynamical property of the resulting machines (the third item in the lemma), which is clear from the construction of \cite{KaOl08}. This property is very robust to implementation details (up to possibly taking a power of the machine), and seems like a necessary consequence of any Hooper-like \cite{Ho66} construction.

\begin{lemma}
\label{lem:Mn}
Given $n$, we can effectively construct a one-head machine $M_n = (Q, S, T)$ such that the $n$th program halts iff $M_n$ is periodic in the moving head model. Furthermore, we may assume that
\begin{itemize}
\item the machine $M_n$ only moves on every fourth step: $Q$ has as a direct summand an explicit mod-$4$ \emph{counter} that alternates between an \emph{active} step and three \emph{inactive} steps, and we move the machine and make modifications to the tape only when it is active,
\item $Q$ has as direct summands two explicit \emph{dummy bits},
\item if the $n$th program never halts, then there exist $x \in S^\Z, q \in Q, i \in \Z$ such that started from $(q, i, x)$, in the moving head model,
\begin{itemize}
\item for all $t$ there exist $t_1 < t_2 < t_3 < t_4$ with $t_2 - t_1, t_4 - t_3 \geq t$ such that the coordinate $0$ is visited by the head an odd (finite) number of times in the time interval $\llb t_2, t_3 \rrb$ over the $M_n$-orbit, with the counter in the active state, and
\item during the time intervals $\llb t_1, t_2-1 \rrb$, $\llb t_3+1, t_4 \rrb$, the machine $M_n$ does not visit the origin.
\end{itemize}
\item $|Q| \geq 5$.
\end{itemize}
\end{lemma}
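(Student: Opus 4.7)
The plan is to take the construction of \cite{KaOl08} as a black box for the reduction from the halting problem to periodicity of complete reversible Turing machines, and then patch it with three small modifications to get the extra bookkeeping properties.

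First I would take the machine $M'_n$ produced by Kari and Ollinger, which already satisfies ``$n$th program halts iff $M'_n$ is periodic in the moving-head model''. To obtain the mod-$4$ counter, I enlarge the state set to $Q' \times \Z/4\Z$ and declare the transition to be the identity on tape and head position except when the counter equals $0$, in which case it applies one step of $M'_n$; the counter is always incremented. This slows time by a factor $4$ and obviously preserves reversibility and periodicity. To obtain the two dummy bits, I further enlarge the state set to $Q' \times \Z/4\Z \times \{0,1\}^2$ and let the transition act trivially on the last two coordinates; this does not interact with any dynamical property. After these changes $|Q| \geq 5$ holds comfortably, but if one is worried I can add a few isolated states never reached from any computation and never mapping to themselves differently.

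The substantive content is the third property. To obtain it I would open the Hooper-style hierarchical simulator underneath the Kari-Ollinger construction. On a non-halting input it exhibits an infinite sequence of ``macro-sweeps'' whose spatial width and temporal length grow without bound, and between two sweeps at a sufficiently deep level the head is spatially far from any previously visited cell while it performs subordinate computation at higher levels. That is precisely what supplies the two arbitrarily long quiet intervals $\llb t_1, t_2-1 \rrb$ and $\llb t_3+1, t_4 \rrb$ in which the head does not visit the origin: I place the origin at one fixed cell inside the ``ground-level'' region where the simulated program actually lives, and for every $t$ pick two different deep macro-sweeps that avoid the origin for longer than $t$ on either side; the middle interval $\llb t_2, t_3 \rrb$ is then the finite stretch between them, during which the head visits the origin only finitely often (because no sweep has infinite duration).

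The hard part, and the one I would spend the most care on, is ensuring that the number of origin visits with the counter in the active state inside $\llb t_2, t_3 \rrb$ is \emph{odd}. The visit count has a definite parity that is determined by the geometry of the simulator's passes through the ground-level region. If the natural parity is already odd in the original construction, we are done; if not, I would normalize parity by one of two local edits that do not affect the halting reduction. The first option is to shift the designated origin by one cell, which toggles the parity of visits made by any single left-to-right sweep through the neighbourhood. The second, more robust, option is to add one extra forced ``back-and-forth'' across a designated cell at the start of each active step, which is an explicit local modification to the transition table of $M'_n$ that adds a fixed constant to $|Q|$ and preserves reversibility, and which has the effect of introducing exactly two extra visits per macro-sweep through the origin except on a distinguished sweep at each level, so that the total parity over any interval containing one such distinguished sweep is odd. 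Once the parity is arranged, setting $M_n$ to be the patched machine, together with $(q,i,x)$ coming from the non-halting initial configuration of the simulator with the origin placed as above, discharges all four bullet points.
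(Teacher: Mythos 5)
The first two bullets and $|Q| \geq 5$ are handled the same way as the paper: enlarge the state set to $Q' \times (\Z/4\Z) \times \{0,1\}^2$, have the machine act nontrivially on the tape and head only on the active counter step, and ignore the dummy bits; $|Q| \geq 5$ then comes for free.

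The third bullet is where your proposal diverges from the paper and where there is a genuine gap. You correctly identify that the Hooper/Kari--Ollinger nesting produces arbitrarily long stretches of time during which the head is far from any fixed cell, which supplies the quiet intervals $\llb t_1, t_2-1 \rrb$ and $\llb t_3+1, t_4 \rrb$. But your treatment of the parity of visits is not convincing. Your first fix, ``shift the designated origin by one cell'', does not toggle parity in general: a back-and-forth sweep visits every interior cell an even number of times, so shifting the origin inside the swept region leaves parity unchanged. Your second fix, adding a forced back-and-forth at every active step, adds exactly two extra visits per pass and hence also leaves parity unchanged; the escape clause about a ``distinguished sweep at each level'' that somehow contributes an odd number is not described concretely enough to assess and would itself require a substantive new argument. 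As written, neither option establishes oddness.

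The paper avoids parity surgery altogether by exhibiting a specific infinite configuration on which the origin is visited \emph{exactly once} over the entire bi-infinite orbit. Put the head on the symbol $@_\delta$ inside a block $@_\delta 1^a x 2^b y$ with $a,b$ large, in a state $\gamma$ that places the simulated counter machine between its two phases (after checking whether the counters are zero, before incrementing/decrementing them). Both forward and backward in time the head steps into the $1^a$ region, lays down a fresh $@_\varepsilon x y$, and starts a nested subsimulation that does not return to $@_\delta$ for at least $a - O(1)$ steps. By compactness one then takes $x_{\llb 0,\infty)} = @_\alpha 1^\omega$ and concludes that the head is at the origin only at time $0$. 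This gives the odd count ($=1$) and two arbitrarily long quiet intervals simultaneously, with no parity correction needed. You should look for a witness configuration of this kind rather than try to normalize parity across sweeps; the latter is much harder to make rigorous in this setting.
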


\begin{proof}
By \cite[Theorem~8]{KaOl08}, we can effectively construct a machine $M_n$ that is periodic if and only if the $n$th program halts. The first two properties are self-explanatorily fulfilled, simply replace $Q$ by
\[ Q \times \{\mbox{active},\mbox{inactive}_1, \mbox{inactive}_2, \mbox{inactive}_3\} \times \{0,1\}^2 \]
and make the obvious modifications to the rule: dummy bits are ignored, and the counter is cycled, and we only take a step of the original machine when the counter is in the active state. The third property follows automatically from the construction of \cite{KaOl08}; in fact, the odd number is $1$, and the origin is visited only once over the entire orbit. We now briefly outline why this is the case (assuming the $n$th program never halts).

We assume the reader has a high-level understanding of the construction, but recall the basic idea: the $n$th program whose halting we are interested in is first simulated on a reversible two-counter machine in a standard way, and a (reversible) one-head machine simulates this counter machine on words of the form $@ 1^a x 2^b y$ (simulating counter machine configurations $(s,a,b)$ for some counter machine state $s$ determined by the state of the simulating one-head machine). The way the counter machine is simulated is by bouncing back and forth between the left and right end of $@ 1^a x 2^b y$, using the internal state of the one-head machine to simulate the counter machine, to move the counters, and to check their positivity.

The nontrivial issue to overcome is that arbitrarily long searches are needed when $a$ or $b$ is large, leading to infinite orbits. To combat this, whenever we do not immediately find the delimiter we are looking for ($@$, $x$ or $y$), we lay down $@_{\delta}xy$ (where $\delta$ records the current state) and begin another simulation from an initial state. When we run out of space for this simulation (by hitting another delimiter or some encoding error), we simply reverse the direction of simulation; for this the state should contain a direction bit that determines the direction of simulation. When a backward computation hits $@_{\delta}xy$ in an initial state, we erase this pattern from the tape and continue the computation from state $\delta$.

Now, consider what happens when the one-head machine is on the $@_\delta$ symbol on a subword of the form $@_\delta 1^a x 2^b y$, for large $a, b$ and some state $\delta$, and the machine itself is in state $\gamma$. The one-head machine is simulating a computation of some reversible counter machine on some configuration $(s, a, b)$, where $s$ is determined by the current state. Now, quoting \cite{KaOl08}, ``A simulation of one move of the CM consists of (1) finding delimiters ``$x$'' and ``$y$'' on the right to check if either of the two counters is zero and (2) incrementing or decrementing the counters as determined by the CM.'' This high-level description suffices for us.\footnote{In theory, it could be that the implementation of this high-level description actually always makes the head visit the origin an even number of times between steps (1) and (2) of a simulated counter machine step. This is not the case, and even if it were, assuming any reasonable implementation details, we would obtain our desired statement by simulating instead a high enough power of the machine.} Suppose the current state $\gamma$ is such that we are simulating the CM forward in time, and are between these two steps (1) and (2), meaning the machine has just returned from checking whether the counters are zero, and next proceeds to increment or decrement a subset of them.

Then, both forward and backward in time, the machine steps to the right of $@_\delta$, places some $@_\varepsilon x y$ on the tape (over the $1^a$ area), and begins a new computation. This computation does not step back on the original $@_\delta$-symbol until we run out of space for the computation in the $1^a$-area, which necessarily takes at least $a-O(1)$ steps.

By compactness, on any configuration $\eta = (\gamma,0,x)$ with $x_{\llb0,\infty)} = @_\alpha 1^\omega$, the head is at the origin only on time-step $0$. Thus any such $\eta$ satisfies the third property.

The fourth property is automatic.
\end{proof}



We now describe automorphisms $\alpha'(n)$ and $\beta'(n)$ (from which $\alpha(n)$ and $\beta(n)$ will be built). These are automorphisms of the same full shift $B^\Z$ (depending on $n$) where 
\[ B = \{0,1\}^2 \cup A \; \mbox{  and  } \; A = (S^2 \times \{\leftarrow, \rightarrow\}) \cup (Q \times S) \cup (S \times Q) \] 
where $Q$ and $S$ are (resp.) the state set and tape alphabet (resp.) of the machine $M_n$. The elements of $\{0,1\}^2$ are called \emph{blinkers}, and the two bits are respectively called the \emph{activity bit} (which can be \emph{active} $= 1$ or \emph{inactive} $=0$) and the \emph{blink bit}. 


The automorphism $\alpha'(n)$ works in three steps (meaning it is the composition of three automorphisms). First, it applies the standard conveyor belt simulation of $M_n$ as described in \cite[Lemma~3]{GuSa17} (the blinkers are simply ignored in this simulation, and each blinker cuts the tape in two). We refer to the proof of \cite[Lemma~3]{GuSa17} for details, but the basic idea is that if a maximal (possibly one- or two-sided infinite) word over the alphabet $A$ contains a unique \emph{head}, i.e.\ element of $(Q \times S) \cup (S \times Q)$, and all arrows $\{\leftarrow, \rightarrow\}$ point toward it, then we interpret it as either periodic point with repeating pattern in $QS^{2n-1}$ for some $n$, or as $S^{-\omega}QS^\omega$; to do this, the bottom track is read in reverse, and at the possible boundaries of the word, we join the top and bottom tracks like a conveyor belt. These words where the one-head machine is simulated are called \emph{segments}, and a simple local rule can identify the segments.

In the second step of $\alpha'(n)$, every blinker in an active state flips its blink bit (from $b$ to $1-b$), and inactive blinkers do nothing. In the third step, if a head is on the top track (so $(q, s) \in Q \times S$) in an active state $q$ (meaning the counter state is active), and there is a blinker immediately to the left of it, then the first dummy bit of the state of $M_n$ is flipped (that is, we replace $q \in Q$ by the state which is identical to $q$ except that the dummy bit is flipped).

The automorphism $\beta'(n)$ first applies $\alpha'(n)$, and then an additional automorphism which flips the activity of a blinker under the same condition under which the first dummy bit of a head was flipped in the third step of $\alpha'(n)$: if immediately to the right of a blinker there is a head $(q, s) \in Q \times S$, and $q$ is an active state of $M_n$, then the activity of the blinker is flipped. Note that neither automorphism touches the second dummy bit of the state. 

The above discussion serves as the definition of $\alpha'(n)$ and $\beta'(n)$. We omit the formula for the local rule, but it is in principle straightforward to write down a local rule based on this description.

From $\alpha'(n)$ and $\beta'(n)$ we build two automorphisms $\alpha''(n)$ and $\beta''(n)$ of the full shift $(A \times B)^\Z$, by applying them on the second track and fixing the first track, so e.g.\ $\alpha''(n)(x,y) = (x, \alpha'(n)(y))$. Note that in particular $\alpha''(n)$ and $\beta''(n)$ commute with $\sigma^2$, and we can also see them as elements of $\Aut(((A \times B)^\Z, \sigma^2))$. 


In the proof of the main theorem, we need to construct $\alpha(n)$ and $\beta(n)$ inside a finitely-generated group $G$. The group we will use is that from Lemma~\ref{lem:G}. The automorphisms $\alpha(n)$ and $\beta(n)$ are constructed inside $G$ by explicitly simulating $\alpha''(n)$ and $\beta''(n)$, but with the small subtlety that we also simulate some additional elements: let $T$ be the generating set of $\CPAut_2[A;B]$, then take
\[ H' = \langle \alpha''(n), \beta''(n), T \rangle. \]
Now, construct an explicit embedding $\phi : H' \to H(n) \leq G$ (surjective onto a subgroup $H(n)$), and let $\alpha(n)$ and $\beta(n)$ be the images of $\alpha''(n)$ and $\beta''(n)$ in this embedding.

The names of the objects that need to be kept in mind are summarized in the following definition.

\begin{definition}
\label{def:Remember}
Suppose a subshift $X$ and a subgroup $G \leq \Aut(X)$ with the properties stated in Lemma~\ref{lem:G} are fixed. In this section we have associated to every $n \in \N$ a one-head machine $M_n$, automorphisms $\alpha'(n), \beta'(n) \in \Aut(B^\Z)$ for some alphabet $B = B(n)$, automorphisms $\alpha''(n), \beta''(n) \in \Aut((A \times B)^\Z)$, an f.g.\ subgroup $H(n) \leq G$ and automorphisms $\alpha(n), \beta(n) \in H(n)$, all effectively constructed from $n$.
\end{definition}

\subsection{What to do when the $n$th program halts}
\label{sec:WhatToDoWhenHalts}

In this section, we describe the conjugacy between $\alpha'(n)$ and $\beta'(n)$ when the $n$th program halts. Suppose thus that it does, so that $M_n = (Q, S, T)$ is periodic in the moving-head model. We first make some dynamical observations. (The symbols from Definition~\ref{def:Remember}, in particular $A$ and $B$, retain their meaning.)

Fix linear orders on $Q$ and on $S$. Then a (somewhat arbitrary) preorder is obtained on $Q \times \Z \times S^\Z$ by setting $(q^1,k^1,x^1) < (q^2,k^2,x^2)$ if $q^1 < q^2$, or $q^1 = q^2$ and $y^1 < y^2$ in lexicographic order where
\[ y^i = x^i_{k^i} x^i_{k^i+1} x^i_{k^i-1} x^i_{k^i+2} x^i_{k^i-2} x^i_{k^i+3} \ldots \]
Let $\eta \sim \eta'$ if $\eta \not< \eta'$ and $\eta' \not< \eta$.
If $A \subset Q \times \Z \times S^\Z$ we say $\eta \in A$ is \emph{strictly minimal} in $A$ if $\eta < \eta'$ for all $\eta' \in A \setminus \{\eta\}$, and \emph{minimal} if $\eta < \eta'$ or $\eta \sim \eta'$ for all $\eta' \in A \setminus \{\eta\}$.

Let $C \subset Q \times \Z \times S^\Z$ be the set of configurations $\eta = (q,k,x)$ satisfying the following properties:
\begin{itemize}
\item the head is at the origin ($k = 0$),
\item $\eta$ is strictly minimal in the orbit $\{M_n^i(\eta) \;|\; i \in \Z\}$ with respect to the order defined above.
\end{itemize}
It is easy to prove that $C$ is clopen: Any minimal element in the orbit is automatically strictly minimal, because $\eta = (q,0,x) \sim \eta'$ for $\eta \neq \eta'$ implies $\eta' = (q,k,\sigma^{-k}(x))$ for some $k \neq 0$, and for two configurations in the same orbit this would imply aperiodicity in the moving-head model (and the machine could not be periodic). Clopenness then follows because minimality and ``not being strictly minimal'' over the (bounded) orbit are clearly closed conditions.

Let now $k$ be such that whether $(q, 0, x) \in C$ or not is determined by the pair $(q, x_{\llb -k,k \rrb})$. 
The set $C$ can also be interpreted in the moving-tape model (dropping the $0$s from the triples), and it is then a cross-section for the dynamics: every configuration enters it at a syndetic set of times. When it recurs in the moving-tape model, it must recur also in the moving-head model, since the one-head machine periodic in the moving-head model.

We now conjugate $\alpha'(n)$ to $\beta'(n)$ by a sequence of commuting marker permutations, which are in fact even involutions (thus their composition is also an even involution). We start with an elementary lemma about symmetric groups of products. If a group $G$ acts on a product $A \times B$, we say its action is \emph{well-defined on $A$} if for all $g \in G$ we have $g(a,b) = (c,d) \wedge g(a,b') = (c',d') \implies c = c'$, and its \emph{action on $A$} is then defined by $ga = a' \iff \exists b,b': g(a,b) = (a',b')$.

\begin{lemma}
Let $W_1, W_2, \cdots W_m$ be finite sets with $|W_i| \geq 5$ for all $i$. Suppose that for each $j$ we have an action of $\Alt(W_j)$ on $\prod_i W_i$ which does not modify the coordinates $i < j$, has a well-defined action on $W_j$, which is simply the natural action, and may modify the components $W_i$ for $i > j$ arbitrarily. Then the induced action of the free product of the groups $\Alt(W_j)$ contains the natural action of $\Alt(W_j)$ on the $j$th coordinate, for all $j$.
\end{lemma}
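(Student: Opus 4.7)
My plan is a downward induction on $j$, from $j=m$ down to $j=1$. The base case $j=m$ is immediate: by hypothesis the given action of $\Alt(W_m)$ is natural on $W_m$ and there are no coordinates past $W_m$ to modify, so it already is the pure action. For the inductive step, fix $j<m$ and assume the pure action $\pi_i$ of $\Alt(W_i)$ on the $i$-th coordinate lies in the image group $G\leq\Sym(\prod_iW_i)$ of the free product for every $i>j$. Given $g\in\Alt(W_j)$, write $\tilde g\in G$ for the given action. The element $\xi_g:=\pi_j(g)^{-1}\tilde g$ agrees with the identity on $W_1,\ldots,W_j$ by construction, so it lies in the normal subgroup $N_j\trianglelefteq G$ of elements fixing $W_1\times\cdots\times W_j$ pointwise, and it is enough to show $\xi_g\in G$, since then $\pi_j(g)=\tilde g\,\xi_g^{-1}\in G$.

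The main tool I would use is the collection of commutators $[\tilde g,\pi_i(h)]$ for $h\in\Alt(W_i)$, $i>j$: these lie in $N_j\cap G$ automatically, and they measure the `twist' of $\tilde g$'s action on $W_{>j}$ relative to a fiber-constant pure action. Because $|W_j|\geq 5$ makes $\Alt(W_j)$ simple, $g\mapsto\tilde g$ is injective; for a $3$-cycle $c\in\Alt(W_j)$ this forces $\tilde c$ to have order exactly $3$, and the resulting cycle structure on $\prod_iW_i$ constrains its tail on $W_{>j}$ strongly enough that the twist is genuinely non-constant across fibers $(w_1,\ldots,w_j)$, unless $\tilde c=\pi_j(c)$ already, in which case there is nothing to prove. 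Multiplying such a commutator by a fiber-constant product of pure $\pi_i(h')$'s then cancels the constant part and leaves a `localised' element of $N_j\cap G$ supported on only a few fibers.

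Once one localised element is produced, conjugating by various $\tilde{g'}$ with $g'\in\Alt(W_j)$ transports its support across fibers via the transitive $\Alt(W_j)$-action on $W_j$, and combining with the simplicity of each $\Alt(W_i)$ for $i>j$ produces, at each fiber, every element of $\prod_{i>j}\Alt(W_i)$. Taking a suitable product then realises $\xi_g$ in $G$, completing the inductive step. The main obstacle I anticipate is proving that some commutator $[\tilde c,\pi_i(h)]$ really has non-constant fiberwise behaviour for a concrete choice of $(c,h,i)$, rather than being absorbable as a coboundary; the cocycle identity inherited from $g\mapsto\tilde g$ being a homomorphism makes this plausible, but organising the verification into a clean statement, rather than into a proliferation of cases indexed by the $\Alt(W_j)$-orbits on the higher-coordinate fibers, looks to be the most delicate part of the write-up.
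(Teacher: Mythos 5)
Your downward induction and the reduction to showing the ``defect'' $\xi_g=\pi_j(g)^{-1}\tilde g$ lies in $G$ match the paper's overall shape, but the mechanism you propose for actually producing an element of $G$ that is pure on $W_j$ has a genuine hole. The key claim---that for a $3$-cycle $c$ the commutators $[\tilde c,\pi_i(h)]$ detect a necessarily non-constant fiberwise twist unless $\tilde c=\pi_j(c)$ already---is false. A constant-but-nontrivial twist is not a degenerate case; it is the generic one, and in fact the one arising in the paper's application: $\tilde c$ can act on each higher coordinate $W_i$ by a fixed permutation $\rho_i$ of $W_i$, independent of all other coordinates. Then every commutator $[\tilde c,\pi_i(h)]=\pi_i([\rho_i,h])$ is a pure higher-coordinate action already known to lie in $G$ by the inductive hypothesis, so your apparatus produces nothing that moves $\xi_g$ into $G$.

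The paper handles this with a specific device you never invoke: since $\tilde c$ has order $3$, its (well-defined) action on each higher $W_i$ is a product of $3$-cycles, hence even and, crucially, \emph{real} in $\Alt(W_i)$---conjugate to its inverse by an even permutation. Letting $\gamma$ be the product of these pure even conjugators on the coordinates $i>j$ (available by induction), one finds that $\tilde c\cdot\tilde c^{\gamma}$ acts as $c^{2}$ on $W_j$ and trivially on every $W_i$ with $i>j$. This exhibits a nontrivial element of $N=\{\sigma\in\Alt(W_j)\,:\,\pi_j(\sigma)\in G\}$; normality of $N$ in $\Alt(W_j)$ (which again uses that the higher-coordinate contributions cancel cleanly under conjugation by $\tilde g$) together with simplicity of $\Alt(W_j)$ finishes the inductive step. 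The realness trick is the crux of the paper's argument, and it resolves precisely the constant-twist case your sketch tries to argue away.
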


\begin{proof}
Let $G$ be the free product of the $\Alt(W_j)$ acting on $\prod_i W_i$. For $j = m$ the claim is true by assumption. Assuming it is true for $j' > j$, let us prove it for $W_j$. The subgroup that only modifies the $j$th coordinate is normal, so by simplicity it is enough to show it is nonempty. Consider thus a $3$-cycle $\pi$ acting $W_j$. Its action on each $W_i$ with $i > j$ is an even permutation of order $3$. Any permutation of order $3$ on a set $W$ of cardinality at least $5$ is \emph{real (in $\Alt(W)$)}, i.e.\ conjugate to its inverse by an even permutation. Let $\pi_i \in G$ for $i > j$ act as the conjugating permutation on $W_i$ and trivially on all other $W_{i'}$ (which is possible by induction). Then defining $\gamma = \prod_i \pi_i$, the action of $\pi \circ \pi^{\gamma}$ is that of $\pi^2 \neq \ID$ on $W_j$ and trivial on $W_i$ for $i > j$, and we conclude by induction.
\end{proof}

\begin{lemma}
\label{lem:WhatToDoWhenHalts}
Suppose the $n$th program halts. Then $\alpha'(n)$ and $\beta'(n)$ are conjugate by a composition of even marker permutations, which is an involution.
\end{lemma}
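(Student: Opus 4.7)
The plan is to build $\phi$ as a product of commuting even marker involutions indexed by the (finite) cross-section $C$. The observation that motivates everything is that $\alpha'(n)$ and $\beta'(n)$ agree on every aspect of the dynamics except the activity bit of the blinker: the conveyor-belt head-tape step, the blink-bit flip of active blinkers, and the first-dummy-bit flip at a trigger are the same in both, and $\beta'(n)$ only differs in that each trigger (active head immediately to the right of a blinker) additionally flips the activity of that blinker. Equivalently, $d_h \oplus a_L$ (first dummy bit of the head XOR activity of the left blinker of its segment) is a $\beta'(n)$-invariant along each segment's orbit, while under $\alpha'(n)$ it toggles at every trigger. A conjugating involution must therefore recode the activity so that this XOR is $\alpha'(n)$-invariant too, and it is natural to perform this recoding at one canonical moment per $M_n$-orbit, provided by $C$.

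First, for each $\eta \in C$ -- a finite set, since $C$-membership is determined by $(q, x_{\llb -k, k \rrb})$ -- I would introduce a marker $M_\eta \subset B^{2k+1}$ whose central letter encodes a head in state $q$ with surrounding letters encoding $x$ on the conveyor-belt tracks, arrows oriented toward the head. These markers are mutually unbordered: two overlapping occurrences would produce two heads within distance $2k$ of each other inside a single segment (impossible), and heads in different segments are separated by at least one blinker (which no $M_\eta$ contains). Since $M_n$ is periodic and $\eta$ is strictly minimal in its orbit, each segment's $\alpha'(n)$-orbit meets $M_\eta$ at most once per $M_n$-period, at a canonical moment uniquely located in $\Z$.

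Second, each marker permutation $\pi_\eta$ is chosen to toggle the activity (and, where needed, the blink bit) of the left blinker of the segment containing the marker, with the effect that $\phi$ flips $a_L$ exactly when the segment's orbit has accumulated an odd number of triggers since the canonical moment. Because the adjacent blinker sits at an arbitrary distance from the head, I would realize this effect as a composition of two families of marker involutions: the $M_\eta$'s themselves (flipping the first dummy bit inside the head at the canonical moment), together with a second family of blinker-local markers that read the head's first dummy bit in the segment on their right and flip the blinker's activity and blink bit accordingly. Both types of markers are made \emph{even} by enlarging the permuted set using the second dummy bit -- which is untouched by both $\alpha'(n)$ and $\beta'(n)$ -- to size at least $5$, after which the elementary lemma on alternating groups of products guarantees that any target involution on the relevant coordinate is realized inside the alternating group. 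The two families act on disjoint window types (head-centered vs.\ blinker-centered), so all the resulting marker involutions commute.

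Third, the verification $\phi\,\alpha'(n)\,\phi = \beta'(n)$ reduces to a telescoping calculation. Setting $F_i(x) = 1$ iff an odd number of triggers have occurred at blinker $i$ since the canonical moment in its right segment, one checks the coboundary identity $F_i(\alpha'(n)(x)) = F_i(x) \oplus T_i^{\mathrm{new}}(x)$, and this is precisely what makes $\phi$ (whose net effect is to flip $a_L$ according to $F_i$) conjugate the $\alpha'(n)$-activity dynamics to the $\beta'(n)$-activity dynamics. The main obstacle is the blink bit, whose trajectory depends on the current value of the activity and therefore differs between $\alpha$- and $\beta$-orbits by a cumulative sum (not a single-step coboundary) of past activity differences; reconciling this is what forces the second, blinker-centered, family of markers, whose action and evenness must be arranged simultaneously with the first family while keeping the entire composition an involution.
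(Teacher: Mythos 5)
The high-level observation you open with (that $\alpha'(n)$ and $\beta'(n)$ differ only in the activity-bit update, and that the conjugator must recode the blinker state via a coboundary anchored at the cross-section $C$) matches the paper's guiding idea, and the use of the second dummy bit to ensure evenness is the same device. But the implementation diverges in ways that leave real gaps.

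First, the two-family scheme does not work as stated. You claim the head-centered markers $M_\eta$ and the "blinker-local" markers "act on disjoint window types, so all the resulting marker involutions commute." But the blinker-local marker must read the head's first dummy bit, and the head is not adjacent to the blinker at the canonical moment -- it can be up to $\ell$ cells away within one $M_n$-period. So the blinker-local window must be long enough to contain the head, at which point it overlaps the head-centered window around $M_\eta$, and the two marker families are not supported on disjoint cells. Commutation, involutivity of the composition, and even the well-definedness of the permutations all need to be re-examined once the windows overlap. The paper avoids this entirely by using a single family of markers anchored at the blinker and extending $3\ell$ cells to the right, which is wide enough to contain the head throughout the period; the entire bookkeeping (orbit representative, parity of active top-track visits, adjustment of the blink bit via the residue of the intervening time) is then read off one window.

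Second, you do not address short segments. When the blinker's right segment has length $\ell' < 3\ell$, the conveyor-belt simulation does not reproduce the infinite-tape dynamics of $M_n$, the cross-section $C$ is not meaningful, and your marker $M_\eta$ cannot occur at all; yet $\alpha'(n)$ and $\beta'(n)$ still differ on such segments and the conjugator must act there too. Worse, a marker of radius $\ell'$ cannot tell whether the segment it sees actually ends at position $\ell'$ or continues; applying a correction there has uncontrolled side effects on the longer windows $W_{\ell'+j}$. The paper devotes roughly half the proof to this: it introduces the sets $W_{\ell'}$ for $1 \le \ell' < 3\ell$, observes that the desired corrections are well-defined automorphisms but not marker automorphisms (because they must condition on the segment ending), and then invokes the lemma on free products of alternating groups acting with side effects to recover each individual $\Alt(W_{\ell'})$-action from marker automorphisms alone. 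Your proposal uses that lemma only to enforce evenness, which is not what it is needed for. Without an argument for short segments, the conjugation fails on a nonempty clopen set of configurations.
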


\begin{proof}
Let $k$ be the radius of the clopen set $C$ defined above, and let $\ell > k$ be such that during its period $M_n$ does not travel more than $\ell$ steps. For all $\ell' \leq 3\ell$ define $W_{\ell'} \subset (B \setminus A) A^{\ell'}$ as the set of words where the subword from $A^{\ell'}$ forms a single segment in the terminology of the previous section. More precisely, there is at most one head and all $\{\leftarrow,\rightarrow\}$-arrows point toward the head, or all arrows point to the right if no head is visible. Let $W \subset W_{3\ell}$ be the subset where there is a head, which is additionally at distance at most $2\ell$ from the blinker (the symbol from $B \setminus A$ in the first coordinate), and let $U \subset W$ be the subset of words where the simulated tape of $M_n$ visible around the head (in the conveyor belt interpretation) is in $C$ and the head is at distance at most $\ell$ from the blinker. The blinker value in the first symbol from $B \setminus A$ can be arbitrary in all these sets.

Observe that due to the blinker in the first coordinate, each $W_{\ell'}$ is a mutually unbordered set, and so is $W$. We separately permute $W$ and the sets $W_{\ell'}$ with $1 \leq \ell' < 3\ell$. On $bw \in W$, $b \in B \setminus A$, our permutation fixes $w$ and changes $b$ as follows. If the configuration $bw$ is in the orbit of some $au \in U$ (in the obvious sense, noting the head does not exit the word in the application of $\alpha'(n)$), under the action of $\alpha'(n)$, then let $au \in U$ be the orbit representative. Observe that the activity bit has the same value in $a$ and $b$. Now, count the number of times the head visits the first cell of $w$ while in an active state and while on the top track, over the partial $\alpha'(n)$-orbit from $au$ to $bw$. If this number is odd, flip the activity bit, and if $t'$ is the last time step after $au$ and before $bw$, such that the head visits the first cell of $w$ while in an active state and while on the top track, and $t$ is minimal with $bw = \alpha'(n)^{t' + t}(au)$, then flip the blinker bit of $b$ if and only if $t$ is odd.

Observe that (in both $\alpha'(n)$ and $\beta'(n)$), we flip the first dummy bit whenever we flip the activity bit of a blinker, and the blinker bit returns to its value between any two active steps of the machine because the machine is only active on every fourth step. Thus, the period of $au$ is the same in the actions of both $\alpha'(n)$ and $\beta'(n)$. The conjugating map above simply records the difference in how $\alpha'(n)$ and $\beta'(n)$ modify the state of the blinker. It is easily seen to be an involution, because toggling a bit (under some condition not involving said bit) is an involution. The permutation is even since the second dummy bit of the one-head machine's state is never modified.

It is now not hard to see that the above marker CA corresponding to the involution $\pi$ on the mutually unbordered set $W$ conjugates the action of $\alpha'(n)$ to that of $\beta'(n)$ in all long enough segments where $M_n$ is simulated: If in some segment the head is further than $\ell$ steps to the right of the closest blinker to the left, or there is no blinker immediately to the left of the segment, then the two maps behave identically and the conjugating map acts trivially. If the segment is of length at least $3\ell$ and the head visits the cell immediately to the right of the blinker during the orbit, then there is a $C$-representative of the orbit at distance at most $\ell$ of the blinker, and the permutation was chosen precisely so that it conjugates the differing actions on the blinker to each other.

Next, we deal with the short segments. First suppose we were allowed to permute words in $W_{\ell'}$ under the condition that the simulating segment ends immediately to the right of this word -- these maps are automorphisms, but not marker automorphisms.\footnote{Actually, the technical result about $\CPAut$ allows such an additional condition, so we could simplify the proof a bit. However, this overlap admits a simpler statement for the present lemma.} In this case, we simply observe that even though the behavior of the machine in the simulation by $\alpha'(n)$ and $\beta'(n)$ does not necessarily correspond to the movement of $M_n$ on any infinite configuration, the logic used two paragraphs above still works, and we can again conjugate the behaviors of the blinker bits by picking an orbit representative. Let $\pi_{\ell'}$ be this even involution.\footnote{If we did not insist on the conjugating automorphism being an involution, we could simply observe that the logic from two paragraphs above shows that the cycle structures of $\alpha'(n)$ and $\beta'(n)$ on $W_{\ell'}$ are necessarily the same, so since all cycles are even, they are conjugate by an even permutation.}

Let $f$ and $f_{\ell'}$ for $\ell' < 3\ell$ be the automorphisms described above (so $f$ applies $\pi$ on each occurrence of $W$, and $f_{\ell'}$ applies $\pi_{\ell'}$ on $W_{\ell'}$ if the segment ends immediately to the right of the word). It is easy to see that $f$ and $f_{\ell'}$ commute, and thus the composition $f' = f \circ f_1 \circ \cdots f_{3\ell'-1}$ is also an involution. It remains to show that $f'$ is a composition of even marker permutations (where we do not know whether the segment ends).

For this observe that for a fixed $\ell' \leq 3\ell$ we can perform any even permutation on the set $W_{\ell'}$ (note that $W \subset W_{3\ell}$, so this includes $\pi$), as a marker automorphism. The marker automorphism does not see whether the segment continues, so if it does continue this has the side-effect that we also modify  segments of the form $W_{\ell'+j}$ for $j \geq 1$ occurring on the tape.

Thus what we have is an action of the free product of the alternating groups $\Alt(W_{\ell'})$ on $\prod_{\ell' \leq 3\ell} W_{\ell'}$ where the action of $\Alt(W_{\ell'})$ on $W_{\ell'}$ is the natural one, but there is a side-effect on the components $W_{\ell'+j}$ as described above. The previous lemma is precisely about this situation, and shows that the group generated includes the natural actions of the groups $\Alt(W_{\ell'})$ individually, and by definition we can build $f'$ from them.
\end{proof}

\subsection{Permutation engineering}

In this section we show how to perform the permutation described in the previous section inside $G$. We show that a large class of marker-type RCA (more general than marker automorphisms) can be implemented in $\CPAut_2[A;B]$. In fact, we show a more general statement that allows applying ``local permutations'' in rather general contexts.

\begin{lemma}
Let $A = \{0,1,...,n-1\}$. Let $\B = \mathcal{P}(\mathcal{P}(A))$. Let $\C$ be smallest subset of $\B$ containing $I_i = \{B \subset \{0,1,...,n-1\} \;|\; i \in B\}$ for all $i \in A$, such that
\[ C, C' \in \C \implies C \cup C' \in \C \wedge C \cap C' \in \C \wedge C \cap (C')^c \in \C. \]
Then $\C = \{C \in \B \;|\; \emptyset \notin C\}$.
\end{lemma}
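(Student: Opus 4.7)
The plan is to prove both inclusions separately; the forward one is immediate and the reverse one is a direct construction, so there is no real obstacle.

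For the inclusion $\C \subseteq \{C \in \B \mid \emptyset \notin C\}$, I would argue by structural induction on the construction of elements of $\C$. The base case is immediate since $\emptyset \notin I_i$ (every member of $I_i$ contains $i$). The property ``$\emptyset$ is not an element'' is clearly preserved by each of the three closure operations $\cup$, $\cap$, $\cap(\cdot)^c$, since these can only remove elements or produce elements already present in one of the operands.

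For the reverse inclusion, the first observation is that $\C$ is closed under finite unions, so it suffices to show $\{B\} \in \C$ for every nonempty $B \subseteq A$. Fix such a $B$. Using closure under $\cap$ repeatedly, form
\[ D_0 \;:=\; \bigcap_{i \in B} I_i, \]
which lies in $\C$ precisely because $B$ is nonempty (this is the only place in the argument where nonemptiness of $B$ is used, and it matches exactly the hypothesis $\emptyset \notin C$ in the statement). Explicitly, $D_0 = \{B' \subseteq A \mid B \subseteq B'\}$. Now enumerate $A \setminus B = \{j_1, \ldots, j_m\}$ and inductively define $D_k := D_{k-1} \cap I_{j_k}^c$ for $k = 1, \ldots, m$. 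Each $D_k \in \C$ by the third closure property, since both $D_{k-1}$ and $I_{j_k}$ are already known to be in $\C$. By construction $D_m$ consists of those $B' \subseteq A$ that contain every element of $B$ and no element of $A \setminus B$, which forces $B' = B$; hence $D_m = \{B\} \in \C$.

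No step is genuinely difficult: the forward inclusion is a one-line induction, and the reverse inclusion is a finite constructive procedure that uses each of the three closure operations for its intended purpose (union to assemble general $C$ from singletons, intersection to build $D_0$, and relative complement to successively exclude elements outside $B$).
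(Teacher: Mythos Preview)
Your proof is correct and follows essentially the same approach as the paper: structural induction for the forward inclusion, and for the reverse inclusion reducing to singletons $\{B\}$ via closure under union, then building $\{B\}$ as $\bigcap_{i\in B} I_i$ intersected with the complements of the $I_j$ for $j\notin B$. Your presentation is in fact slightly more careful in spelling out the iterative use of the relative-complement closure.
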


\begin{proof}
Obviously $\emptyset \notin C$ for all $C \in \C$, by induction, by $\emptyset \notin I_i$ for all $i$, and the form of the closure properties. Thus, it is enough to show $\{C \in \B \;|\; \emptyset \notin C\} \subset \C$. Since we have closure under union, it is enough to show that for each $X \in \mathcal{P}(A) \setminus \emptyset$ we have $\{X\} \in \C$. If $j \in X$ then
\[ \{X\} = I_j \cap \bigcap_{i \in X} I_i \cap \bigcap_{i \notin X} I_i^c \]
which can be constructed from the latter two closure properties.
\end{proof}

The idea of the following lemma is roughly the following: Suppose that for any $g \in G$ and $a \in A$, we can act by $g$ (on something) whenever some variable has value $a$ and some condition $a \in X$ holds; while when the variable has a different value, or $a \notin X$, this action does not happen. We do not actually know what the condition $X$ is, and only know that under at least some condition on $a$ the action takes place. Then assuming we can also permute the variable value transitively, we can perform the action depending on $a$, in a uniform way that works independently of the particular condition $X$.

\begin{lemma}
\label{lem:Confusing}
Let $G$ be a perfect group, $A$ a finite set, $H$ a group acting transitively on $A$. Consider the group $K$ acting on $A \times G \times \power(A)$, with $a$ ranging over $A$, $g$ over $G$, and $\pi$ over $H$, generated by:
\begin{itemize}
\item $p_{g,a}(a', g', X) = \left\{\begin{array}{ll}
(a',gg',X) & \mbox{if } a' = a \in X, \\
(a',g',X) & \mbox{otherwise}
\end{array}\right.$
\item $q_{\pi}(a', g', X) = (\pi(a'),g',X)$.
\end{itemize}
If $|A| \geq 3$ and $|W| \geq 5$, then $K$ contains the following element for all $g \in G$ and $a \in A$:
\begin{itemize}
\item $r_{g,a}(a', g', X) = \left\{\begin{array}{ll}
(a',gg',X) & \mbox{if } X \neq \emptyset, a' = a \\
(a',g',X) & \mbox{otherwise}
\end{array}\right.$.
\end{itemize}
\end{lemma}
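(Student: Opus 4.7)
The plan is to fix $a \in A$ once and for all, and study the family
\[ \C = \bigl\{ C \subset \power(A) \;|\; \forall g \in G: \gamma_{g,C,a} \in K \bigr\} \]
where $\gamma_{g,C,a}$ denotes the permutation that multiplies the middle coordinate by $g$ when $a' = a$ and $X \in C$, and is the identity otherwise. I will show that $\C$ contains $I_i$ for every $i \in A$ and is closed under union, intersection, and relative complement. The preceding lemma then gives $\C \supset \{C \;|\; \emptyset \notin C\}$, so in particular $\power(A) \setminus \{\emptyset\} \in \C$, which is exactly the statement $r_{g,a} \in K$ for every $g \in G$.

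For the base cases: first, $p_{g,a}$ is literally $\gamma_{g,I_a,a}$, so $I_a \in \C$. For $i \neq a$, I use transitivity of $H$ to pick $\pi \in H$ with $\pi(i) = a$. A direct calculation, exploiting the fact that $q_\pi$ does not touch the $X$-coordinate, gives
\[ q_\pi \, p_{g,i} \, q_\pi^{-1} = \gamma_{g,I_i,a}, \]
so $I_i \in \C$ for every $i$.

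For the closure properties: a cell-by-cell commutator computation yields
\[ [\gamma_{g,C,a}, \gamma_{g',C',a}] = \gamma_{[g,g'],C \cap C',a}, \]
because on $X \in C \setminus C'$ (resp.\ $X \in C' \setminus C$) only the $g^{\pm 1}$ (resp.\ $g'^{\pm 1}$) factors act and they cancel, while on $X \in C \cap C'$ the full commutator $[g,g']$ survives. Since $G$ is perfect every element is a product of commutators, and therefore $C \cap C' \in \C$. With the intersection in hand, the identities
\[ \gamma_{g,C,a} \circ \gamma_{g,C',a} \circ \gamma_{g^{-1},C \cap C',a} = \gamma_{g,C \cup C',a}, \qquad \gamma_{g,C,a} \circ \gamma_{g^{-1},C \cap C',a} = \gamma_{g,C \setminus C',a} \]
handle union and relative complement; both are verified by case analysis on which of $C, C'$ contains $X$.

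The main obstacle is the intersection closure: this is precisely the step that forces $G$ to be perfect, since only a commutator can kill the off-diagonal contributions $C \setminus C'$ and $C' \setminus C$. The union and set-difference closures then bootstrap algebraically from it, and the base-case step is where transitivity of $H$ is used. With these ingredients in place, the preceding combinatorial lemma closes the argument: the smallest family closed under $\cup,\cap,\cap(\cdot)^c$ and containing all $I_i$ is exactly $\{C \;|\; \emptyset \notin C\}$, which includes $\power(A) \setminus \{\emptyset\}$, yielding $r_{g,a} \in K$.
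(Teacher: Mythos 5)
Your proof is correct and follows essentially the same approach as the paper: fix $a$, define $\C$ as the family of admissible conditioning sets, obtain the base cases $I_i \in \C$ by conjugating $p_{g,i}$ with $q_\pi$, establish closure under intersection via the commutator identity and perfectness of $G$, bootstrap union and relative complement from that, and then apply the preceding set-theoretic lemma. The only cosmetic difference is that you conjugate by $q_\pi$ on the opposite side (equivalently using $\pi^{-1}$), which yields the same base case.
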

 
\begin{proof}
Fix $a \in A$. Let $\C \subset \power(\power(A))$ be the set of subsets $C \subset \power(A)$ such that we can condition any $G$-action on the event $X \in C$, and the $A$-value being $a$, i.e.\ the set of $C$ such that the element
\[ \gamma_{g,C}(a', g', X) = \left\{\begin{array}{ll}
(a',gg',X) & \mbox{if } X \in C \wedge a' = a, \\
(a',g',X) & \mbox{otherwise}
\end{array}\right. \]
is in the group. Clearly we have $\{X \;|\; X \ni a'\} \in \C$ for all $a' \in A$, namely this is $p_{g,a'}^{q_{\pi}}$ for any $\pi(a) = a'$. If $C, C' \in \C$ then $C \cap C' \in \C$ by $\gamma_{[g,g'],C \cap C'} = [\gamma_{g,C}, \gamma_{g',C'}]$, perfectness of $G$ and the formula $\gamma_{g \circ g',C} = \gamma_{g,C} \circ \gamma_{g',C}$; $C \cup C' \in \C$ by $\gamma_{g, C \cup C'} = \gamma_{g, C} \circ \gamma_{g, C'} \circ \gamma_{g^{-1}, C \cap C'}$; $C \cap (C')^c \in \C$ by $\gamma_{g, C \setminus C'} = \gamma_{g, C} \circ \gamma_{g^{-1}, C \cap C'}$. By the previous lemma, $\C = \{C \subset \power(A) \;|\; \emptyset \notin C\}$. In particular, $C = \power(A) \setminus \{\emptyset\} \in \C$, and clearly $\gamma_{g,C} = r_{g,a}$.
\end{proof}

\begin{lemma}
\label{lem:FineWilfTypeThing}
Let $x, y \in A^\Z$ satisfy $x \neq y, \sigma^p(x) = x, \sigma^q(y) = y$ where $0 < p < q < n$, and write $P = \{i \in \Z \;|\; x_i \neq y_i\}$. If $0 \in P$ then $|P \cap (\llb -q + 1, p \rrb)| \geq 2$.
\end{lemma}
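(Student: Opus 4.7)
The plan is a proof by contradiction driven by the Fine--Wilf theorem. Assume that $P \cap \llb -q+1, p \rrb = \{0\}$, so that $x$ and $y$ agree pointwise throughout this window of length $p+q$ except at the single coordinate $0$. Set $u := x_{\llb -q+1, p \rrb}$ and $v := y_{\llb -q+1, p \rrb}$, viewed as length-$(p+q)$ words differing in exactly one position. Now $u$ inherits the period $p$ from $x$, and I would next show that $u$ also has period $q$: the word $v$ satisfies $v_i = v_{i+q}$ for each of the $p$ indices $i \in \llb -q+1, p-q \rrb$ (those for which $i+q$ remains in the window), and for each such $i$ neither $i$ (which is $\leq -1$) nor $i+q$ (which is $\geq 1$) hits the unique bad entry indexed by $0$, so $u_i = u_{i+q}$ follows from $u = v$ at both endpoints. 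Fine--Wilf, applied to $u$ of length $p+q \geq p+q-\gcd(p,q)$ with periods $p$ and $q$, then yields that $u$ has period $d := \gcd(p,q)$; since $d \mid p$ and $u$ has length $\geq p$, the whole configuration $x$ inherits period $d$.

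From here I would induct on $p$. If $d < p$, then $d$ is a valid period of $x$ with $0 < d < q$, so the lemma applied to $(d, q)$ in place of $(p, q)$ gives $|P \cap \llb -q+1, d \rrb| \geq 2$; since $\llb -q+1, d \rrb \subseteq \llb -q+1, p \rrb$, we get the desired bound and contradict the assumption.

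The remaining case, $d = p$ (equivalently $p \mid q$, write $q = kp$ with $k \geq 2$), is the base of the induction and is the main obstacle. Here the $p$-periodicity of $x$ together with the assumed agreement forces $y_{jp} = x_{jp} = x_0$ for every nonzero $j \in \llb -(k-1), 1 \rrb$, while $y_0 \neq x_0$; thus $y$ is genuinely $q$-periodic (and not $p$-periodic), and the disagreement lives precisely on the coset $q\Z$. To close the argument I would use the $q$-periodicity of $y$ to transport the value $y_0$ to some other window index $r \in \llb -q+1, p \rrb \setminus \{0\}$ and compare the result with $x_r$ (which is fixed by the $d$-periodicity of $x$), extracting a second element of $P$ inside the window. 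Pinning down such an $r$ is the delicate part: the window has the Fine--Wilf-tight length $p+q$, and the correct $r$ must be produced by a careful residue-class analysis modulo $p$ and modulo $q$ using the further hypotheses ensuring that $q$ is not too small relative to $p$.
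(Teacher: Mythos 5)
Your Fine--Wilf reduction is sound as far as it goes: under the contradiction hypothesis $P \cap \llb -q+1, p \rrb = \{0\}$, the window word $u = x|_{\llb -q+1, p \rrb}$ does acquire both periods $p$ and $q$, Fine--Wilf yields period $d = \gcd(p,q)$, $x$ inherits global period $d$, and the induction validly reduces to the case $d = p$, i.e.\ $p \mid q$. But you cannot close this case, because the statement of the lemma is actually \emph{false} there. Take $A = \{a,c\}$, $x = a^\Z$ (so $\sigma^p x = x$ for every $p$), and $y$ with $y_i = c$ iff $q \mid i$, so $\sigma^q y = y$. Then $P = q\Z$, $0 \in P$, yet $P \cap \llb -q+1, p \rrb = \{0\}$ for every $0 < p < q$, so the claimed cardinality bound $\geq 2$ fails. (A variant with nontrivial minimal period of $x$: $x = (ab)^\Z$, $y$ equal to $x$ except $y_i = b$ at $i \in 4\Z$; with $p = 2$, $q = 4$ one gets $P = 4\Z$ and $P \cap \llb -3,2\rrb = \{0\}$.) The ``further hypotheses ensuring that $q$ is not too small relative to $p$'' that you invoke do not exist in the statement, so no residue-class analysis can finish the $p \mid q$ case.

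The paper's own proof takes a different shape --- a bouncing chain of positions $t_i$ with $t_i \equiv -iq \pmod p$, alternating $p$-periodicity of $x$, the agreement on $\llb -q+1, p \rrb \setminus \{0\}$, and $q$-periodicity of $y$, to propagate the value $a$ --- but it has the same blind spot, just less visibly. The chain can only conclude $y_{t_i} = x_{t_i}$ when $t_i \neq 0$, and $t_i \equiv 0 \pmod p$ is exactly what eventually happens (at $i = p/\gcd(p,q)$, and immediately if $p \mid q$), so the final step ``$y_{t_{pq}} = y_0 = a$'' is illegitimate since $0 \in P$ and the chain never actually reaches $y_0$. Both approaches stall at precisely the arithmetic situation $d = p$; yours has the merit of making the obstruction transparent. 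A correct version of the lemma would need either a larger window (e.g.\ $\llb -q+1, q \rrb$, which in the counterexamples above does contain the second point $q \in P$) or an added hypothesis such as ``$p$ is the minimal period of $x$ and $p \nmid q$'', under which your Fine--Wilf step forces $d = p$ by minimality, hence $p \mid q$, an immediate contradiction that closes the proof.
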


\begin{proof}
Suppose the contrary, i.e.\ $0 \in P$ and $P$ does not intersect $\llb -q + 1, -1 \rrb \cup \llb 1, p \rrb$. We may assume by symmetry that $0 \in D$ and $D$ does not intersect $\llb 1, p \rrb$. Let $x_0 = a \neq b = y_0$, and deduce $x_p = a = a = y_p$ from the periodicity of $x$ and the assumption on $D$. Similarly deduce $y_{p-q} = a = y_{t_1}$ where $t_1 \in \llb 1, p-1 \rrb$ and $t_1 \equiv p - q \bmod q$ (using also the periodicity of $y$). Continue inductively to deduce $y_{t_i} = a$ for all $i \geq 0$, where $t_i \equiv p - iq \bmod q$. In particular we get $y_{t_{pq}} = y_0 = a$, a contradiction.
\end{proof}

We need a version of the marker lemma. The standard one would suffice, but we give a superficially stronger version.

\begin{lemma}
Let $X$ be a subshift. For any $m,n$ there exists a block map $f_M : X \to \{0,1\}^\Z$ such that
\begin{itemize}
\item the subshift $f(X)$ does not contain any words $1 0^k 1$ with $k < n-1$.
\item if $x_{\llb -m, m \rrb}$ is not periodic with period less than $n$, then $f_M(x)_{\llb -2n+1, 0 \rrb}$ contains the symbol $1$.
\end{itemize}
\end{lemma}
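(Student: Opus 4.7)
The plan is a lexicographic marker construction. Set $\ell := m+3n$, fix any total order on $A^{2\ell+1}$, and for $x \in X$ and $i \in \mathbb{Z}$ write $w_i(x) = x_{\llb i-\ell, i+\ell \rrb}$. Define
\[
 f_M(x)_i = 1 \iff w_i(x) < w_j(x) \text{ for every } j \in \llb i-n+1, i+n-1 \rrb \setminus \{i\},
\]
so that the $1$'s of $f_M(x)$ sit at strict local minima of the coding sequence. This is a block map of radius $\ell+n-1$.

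The first bullet is immediate. If $f_M(x)_i = f_M(x)_{i'} = 1$ with $0 < i' - i < n$, each of $i, i'$ lies in the other's radius-$(n-1)$ minimality window, forcing the contradictory pair $w_i(x) < w_{i'}(x)$ and $w_{i'}(x) < w_i(x)$.

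For the second bullet, suppose $x_{\llb -m, m \rrb}$ has no period $p < n$. The central observation is that an equality $w_i(x) = w_j(x)$ with $i<j$ forces $x$ to be $(j-i)$-periodic on $\llb i-\ell, j+\ell \rrb$, and by the choice of $\ell$ this interval contains $\llb -m, m \rrb$ whenever $i,j \in \llb -3n, 3n \rrb$, contradicting the hypothesis when $j-i<n$. So under the hypothesis, the coding words are $n$-injective on $\llb -3n, 3n \rrb$. Let $k^* \in \llb -2n+1, 0 \rrb$ minimize $w_{k^*}(x)$ (uniquely, by the $n$-injectivity). Strict inequality $w_j(x) > w_{k^*}(x)$ holds immediately for $j \in \llb -2n+1, 0 \rrb \setminus \{k^*\}$ by minimality plus distinctness, so the remaining task is to verify it on the overhang $\llb k^*-n+1, -2n \rrb \cup \llb 1, k^*+n-1 \rrb$.

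The main obstacle is precisely this overhang step. The coding values $w_j(x)$ for $j$ in the overhang depend on $x$ outside $\llb -m, m \rrb$, where the hypothesis offers no direct control, so one has to rule out $w_j(x) < w_{k^*}(x)$ by a more subtle argument. The preceding Fine--Wilf-style lemma is tailored to this: any $w$-value in the overhang falling below the interior minimum $w_{k^*}$ combines with $k^*$'s minimality inside the target window to propagate a short period of $x$ back into $\llb -m, m \rrb$, contradicting the hypothesis. If this direct argument is too delicate, one can instead enlarge the search, taking $k^*$ to be the leftmost (in the sense of a fixed lexicographic tie-breaker) minimum of $w$ over $\llb -3n+2, n-1 \rrb$ -- a window whose interior contains the entire radius-$(n-1)$ neighborhood of every point of $\llb -2n+1, 0 \rrb$ -- and show via a case analysis driven by the same Fine--Wilf lemma that this leftmost minimum must in fact land in $\llb -2n+1, 0 \rrb$. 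Either route gives the claim, and this overhang analysis is the technical heart of the proof.
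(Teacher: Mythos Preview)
Your construction has a genuine gap that neither of your proposed fixes repairs. Consider the two-letter alphabet with $0 < 1$ and the point $x$ with $x_i = 1$ for $i < 0$ and $x_i = 0$ for $i \geq 0$ (take $X$ to be the full shift). Then $x_{\llb -m, m \rrb} = 1^m 0^{m+1}$ has no period $< n$, so the hypothesis of the second bullet holds. But the coding sequence $i \mapsto w_i(x)$ is \emph{nonincreasing} on all of $\Z$: it equals $1^{2\ell+1}$ for $i \leq -\ell-1$, then strictly decreases through the words $1^{\ell-i}0^{\ell+i+1}$ as $i$ runs from $-\ell-1$ to $\ell$, and equals $0^{2\ell+1}$ for $i \geq \ell$. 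There is no position where $w_i < w_{i+1}$, hence no strict local minimum anywhere, and $f_M(x)$ is identically $0$. Your ``overhang'' analysis never gets off the ground, because the problem is not that the interior minimum $k^*$ fails to be strict on the overhang --- it already fails to be strict inside the window, since $w_0 > w_1 > \cdots$.

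Your two suggested repairs do not help. The Fine--Wilf-type lemma concerns two globally periodic points with short periods and has no purchase here; in the example $w_{k^*} = w_0$ and $w_1 < w_0$, and no short period is produced. For the second route, the minimum of $w$ over $\llb -3n+2, n-1 \rrb$ is attained uniquely at $n-1$, which lies outside $\llb -2n+1, 0 \rrb$ and is in any case not a strict local minimum (since $w_n < w_{n-1}$). More generally, any ``local minimum of a sliding-block coding'' rule is defeated by monotone runs of the coding sequence, which are unavoidable for some order and some point. The standard marker constructions avoid this by restricting attention to positions where the local word is already sufficiently aperiodic before minimizing; the paper itself does not reprove this but simply invokes the classical marker lemma (in its block-map form) and then post-processes by periodically spreading each marker $m$ steps in both directions to obtain the stated window.
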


\begin{proof}
This is obtained from the block map version \cite[Lemma~2]{Sa12b} of the marker lemma \cite[Lemma~10.1.8]{LiMa95} by spreading the areas containing markers for $m$ steps in both directions with period $n$, unless two such areas meet.
\end{proof}

We call the $1$s output by $f_M$ \emph{markers}. Informally, the lemma says that unless the point has a small period for a long time ($m$ steps), we can use the local rule of $f_M$ to read off markers, which are always separated by a distance of at least~$n$. 


In what follows, we will be working mostly with the subshift $((A \times B)^\Z, \sigma^k)$, i.e.\ we work directly with points in $(A \times B)^\Z$, but everything is only $\sigma^k$ invariant, and automorphisms and clopen sets can depend on the position modulo $k$. This requires us to restate some basic definitions explicitly for higher power shifts; this could be avoided by working more intrinsically with $((A \times B)^\Z, \sigma^k)$, but we find that 
it is notationally clearer to keep $k$ explicit.

Consider a closed set $X \subset A^\Z$ (not necessarily shift-invariant). A \emph{local permutation} on $X$ is a homeomorphism $\chi : X \to X$ such that for some $m$, $\chi(x)_{\Z \setminus \llb -m,m \rrb} = x_{\Z \setminus \llb -m,m \rrb}$ for all $x \in X$. We never need to iterate local permutations, so we use superscripts for positional application rather than iteration: For $i \in \Z$ such that $\sigma^i(X) = X$, we use the shorthand $\chi^i = \chi^{\sigma^i}$, and we also say we \emph{apply $\chi$ at $i$}.
We write $\bar \chi$ for the inverse homeomorphism of $\chi$. 

We say a local permutation $\chi$ is \emph{$k$-self-commuting} if ($\sigma^k(X) = X$ and) $[\chi^i, \chi^j] = \ID$ for all $i, j \in k\Z$, and \emph{self-commuting} if it is $1$-self-commuting (in which case $(X, \sigma)$ is a subshift). If $\chi$ is $k$-self-commuting, for any $I \subset k\Z$ we can unambiguously write $\chi^I$ for the result of applying $\chi$ in every position $i \in I$ (along any well-ordering of $I$). The function $\chi^I$ is continuous, and when $I+t = I$ it commutes with $\sigma^t$ (see e.g.\ \cite{Sa18c}). Write $\LP(X)$ for the group of all local permutations on the closed set $X$.

A local permutation has \emph{radius\footnote{This roughly corresponds to the ``strong shift-invariant radius'' from \cite{Sa18c}.} $k$ at $j$} if $\chi(x)_{\llb j, j+k-1 \rrb}$ is only a function of $x_{\llb j, j+k-1 \rrb}$ and $\chi(x)_i = x_i$ for $i \notin \llb j, j+k-1 \rrb$, and \emph{radius $k$} if this holds for some $j \in \Z$. We say an element $\chi \in \LP(X)$ has \emph{evenradius $k$ at $j$} if $\chi = [\chi_1, \chi_2]$ where $\chi_1$ and $\chi_2$ have radius $k$ at $j$, and has \emph{evenradius $k$} if it has evenradius $k$ at some element.




A clopen set $D \subset X$ is \emph{$(k,\chi)$-ignorant} if for all $x \in X$, $i \in \Z$, we have $x \in D \iff \chi^i(x) \in D$, whenever $i \in k\Z$. If $G$ is a group of local permutations, $D$ is \emph{$(k,G)$-ignorant} if it is $(k,\chi)$-ignorant for all $\chi \in G$. If $D$ is $(k,\chi)$-ignorant and $\chi$ is $k$-self-commuting, define $\chi^{(k,D)} : B^\Z \to B^\Z$ by
\[ \chi^{(k,D)}(x) = \chi^I(x), \mbox{ where } i \in I \iff i \in k\Z \wedge \sigma^i(x) \in D. \]
It is straightforward to check that $\chi^{(k,D)}$ is an automorphism of $(B^\Z, \sigma^k)$ (with inverse $\bar \chi^{(k,D)}$).

Say a set $I \subset \Z$ is \emph{$(k,n)$-separated} if $|j-j'| \geq kn$ for any two distinct $j, j' \in I$, and $I \subset k\Z$. Say a clopen set $D \subset B^\Z$ is \emph{$(k,n)$-aperiodic} if it is $n$-aperiodic as a clopen set in $(B^\Z, \sigma^k)$, i.e.\
\[ \sigma^i(D) \cap D \neq \emptyset \wedge i \in k\Z \setminus \{0\} \implies |i| \geq kn. \]

\begin{definition}
Let $X \subset B^\Z$ satisfy $X = \sigma^k(X)$. We say a group $G \leq \LP(X)$ has property $P_{n,k}$ if it is generated by $k$-self-commuting local permutations $\chi$ such that the following properties hold:
\begin{enumerate}
\item there is a $(k,G)$-ignorant $(k,n)$-aperiodic clopen set $D$ such that $\chi^{k\Z} = \chi^{(k,D)}$ for all $\chi \in G$,
\item we can write each $\chi \in G$ as an ordered product $\chi = \prod_{i = 1}^\ell \chi_i$ where $\chi_i \in \LP(X)$ has even radius at most $k$, and for any $(k,n)$-separated $I \subset k\Z$ we have $\chi^I = \prod_{i = 1}^\ell \chi_i^I$.
\end{enumerate}
\end{definition}

Note that the even radius $k$ of $\chi_i$ is interpreted in $(X, \sigma)$ rather than $(X, \sigma^k)$, i.e.\ $\chi_i$ modifies some $k$ consecutive coordinates $\llb j, j+k-1 \rrb \subset \Z$ in configurations of $X \subset B^\Z$, where we do not necessarily have $j \in k\Z$.

\begin{definition}
\label{def:fkechi}
If $E \subset A^\Z$ is clopen and $\chi \in \LP(B^\Z)$ is $k$-self-commuting, then define $f_{k,E,\chi} \in \Aut(((A \times B)^\Z, \sigma^k))$ by
\[ f_{k,E,\chi}(x,y) = (x,\chi^I(y)) \mbox{ where } i \in I \iff i \in k\Z \wedge \sigma^i(x) \in E \]
\end{definition}

We note that if $\chi$ is self-commuting also as an element of $\LP(B^\Z)$, then
\[ f_{1,E,\chi} =  f_{k,E,\chi} \circ f_{k,\sigma(E),\chi^{-1}} \circ \cdots f_{k,\sigma^{k-1}(E),\chi^{-k+1}}. \]

\begin{lemma}
\label{lem:constru}
Suppose $G \leq \LP(B^\Z)$ is a perfect group with property $P_{n,k}$, let $E \subset A^\Z$ be any clopen set, and suppose $|A^k|,|B^k| \geq 5$. Then
\[ f_{k,E,\chi} \in \CPAut_k[A;B] \]
for all $\chi \in G$.
\end{lemma}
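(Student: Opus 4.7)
The plan is to reduce the statement, via the two parts of property $P_{n,k}$, to a conditional-application problem at the level of single $(A \times B)^k$-blocks, and then to handle that via partial shifts and Lemma~\ref{lem:Confusing}. Since $G$ is generated by local permutations with the stated properties, I would first handle the generators of $G$ and then combine.

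First, I would use property $P_{n,k}(1)$ together with the $(k,G)$-ignorance of $D$ to argue that for any generator $\chi$ of $G$ and any $(x,y) \in (A \times B)^\Z$,
\[ \chi^{I(x)}(y) = \chi^{I(x) \cap J(y)}(y), \]
where $I(x) = \{i \in k\Z : \sigma^i(x) \in E\}$ and $J(y) = \{j \in k\Z : \sigma^j(y) \in D\}$. The key point is that $\chi^{k\Z} = \chi^{(k,D)}$, together with $k$-self-commutativity, forces $\chi^j$ to act trivially on $y$ whenever $\sigma^j(y) \notin D$. Since $D$ is $(k,n)$-aperiodic, $I(x) \cap J(y) \subseteq J(y)$ is automatically $(k,n)$-separated, so property $P_{n,k}(2)$ applies, giving
\[ \chi^{I(x) \cap J(y)} = \prod_{i=1}^{\ell} \chi_i^{I(x) \cap J(y)}. \]
This reduces the task to realizing, in $\CPAut_k[A;B]$, a conditional application of each $\chi_i$ (of even radius at most $k$) at every position of $k\Z$ where a certain clopen event on $(A \times B)^\Z$ holds.

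Next, I would handle a single $\chi_i = [\chi_{i,1}, \chi_{i,2}]$ with $\chi_{i,j}$ of radius $k$ at some position. Conjugating by suitable partial shifts $\sigma_1^a \sigma_2^b$ aligns the window modified by $\chi_i$ with the natural $k$-block at position $0 \in k\Z$. In this aligned form, the commutator $\chi_i$ acts as an even permutation of the $B^k$-coordinate of a single $(A \times B)^k$-block. A symbol permutation in $\CPAut_k[A;B]$ directly implements such an even permutation conditioned on the value of the $A^k$-coordinate of that same block.

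The main obstacle is to condition on the full clopen event, which in general depends on a wider window of both tracks than just the central block. Here I would invoke Lemma~\ref{lem:Confusing}: the perfect group $G$ plays the role of ``$G$'' through its action on $B^k$, the set $A^k$ plays the role of ``$A$'' (using $|A^k| \geq 5 \geq 3$), and the contents of the extended conditioning window play the role of ``$X$''. Partial shifts temporarily bring information from neighboring cells into the central block, producing basic conditionings of the form ``apply the action when the $A^k$-coordinate of the central block equals $a$ and some implicit clopen condition holds''. A transitive $H$-action on $A^k$ is provided by symbol permutations cyclically permuting $A^k$. Lemma~\ref{lem:Confusing} then bootstraps these basic conditionings into conditioning on any nonempty clopen event, yielding the required $f_{k,E,\chi_i}$; composing these (and then over generators of $G$) gives $f_{k,E,\chi}$.
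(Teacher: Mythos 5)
Your proposal does not engage with the central technical obstacle the paper's proof is built around. In $\CPAut_k[A;B]$, the generating symbol permutations (conjugated by partial shifts and combined via commutators in the Barrington style) can only be conditioned on information read from the \emph{first} ($A$) track at various offsets; they cannot be conditioned on the contents of the second ($B$) track outside the block currently being modified, because conjugating a $B$-modifying permutation by a power of $\sigma_2$ shifts the reading window and the writing window of the $B$-track together. This is exactly why the target map $f_{k,E,\chi}$ is only conditioned on $E\subset A^\Z$. Consequently, the condition ``$\sigma^i(y)\in D$'' in your reduction cannot be read off, and what you can actually build applies the $\chi_i$ at all positions of $I(x)$, not at $I(x)\cap J(y)$. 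Since $E$ is an arbitrary clopen set, $I(x)$ is not $(k,n)$-separated, so property $P_{n,k}(2)$ does not license the decomposition $\chi^{I(x)}=\prod_i\chi_i^{I(x)}$. Moreover, the individual $\chi_i$ are not required to be trivial off $D$ -- only the composite $\chi$ is -- so the ``cancellations'' implicit in your step~1 need not survive the passage to the $\chi_i$. (Your step~1 itself also over-reads $P_{n,k}(1)$: $\chi^{k\Z}=\chi^{(k,D)}$ does not formally say $\chi^j(y)=y$ for each $j\notin J(y)$.)

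The paper's proof resolves the separation issue with the marker lemma, which is absent from your proposal: it places $(k,n)$-separated markers on the $A$-track, and for each offset $j'\in\llb 0,2kn-1\rrb$ builds an automorphism applying $\chi$ (via the $\chi_i$) only at positions at offset $j'$ from a marker, a set which is $(k,n)$-separated by construction; composing over $j'$ covers every position with a marker nearby. Lemma~\ref{lem:Confusing} is then invoked for a specific purpose that your write-up does not capture: when the $A$-track is too periodic for a marker to appear near a given $j$, one rotates a three-element subset $A'\subset A$ of symbols above the $(k,n)$-separated $D$-occurrences; Lemma~\ref{lem:FineWilfTypeThing} (also missing from your plan) shows that at least one rotation produces a marker near $j$, so the set $X\subset A'$ of ``good rotations'' is nonempty, and Lemma~\ref{lem:Confusing} then upgrades the marker-conditioned construction to the unconditional one. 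Saying that ``the contents of the extended conditioning window play the role of $X$'' does not fit the structure of Lemma~\ref{lem:Confusing}, where $X$ must be a subset of the finite set that the transitive $H$-action permutes, and the given data is precisely the ability to act when $a\in X$. Without the markers, the aperiodicity lemma, and this specific use of Lemma~\ref{lem:Confusing}, the argument does not go through.
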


The conditions $|A^k|,|B^k| \geq 5$ ensure that $\Alt(A^k), \Alt(B^k)$ are themselves perfect groups. Since the lower central series of $\Sym(A^k)$ terminates in $\Alt(A^k)$ once $|A^k| \geq 3$, we could replace this by $3$, but we feel doing everything with perfect groups is less of a mental strain, and in our application we have full freedom over the cardinalities anyway.

\begin{proof}
Apply the marker lemma to $(A^\Z, \sigma^k)$ with the $n$ from the property $P_{n,k}$, and $m$ sufficiently large (determined later). As stated, the marker lemma applies to a specific coding of the subshift, but we can interpret this as follows: on any $x \in A^\Z$ the markers are a $(k,n)$-separated set (i.e.\ $kn$-separated and on positions in $k\Z$), and whenever the subword in $\llb -mn, mn-1 \rrb$ is not $(k,n')$-periodic for any $n' < n$, there is a marker in $\llb -2kn+1, 0 \rrb$. (We say $w \in A^*$ is \emph{$(k,n)$-periodic} if $w_{i+kn} \neq w_i$ for all applicable $i$.) Let $D$ be the clopen set from the definition of $P_{n,k}$.

Recall that $\CPAut_k[A;B]$ is the subgroup of $\Aut(((A \times B)^\Z, \sigma^k))$ where the generators allow applying the shift $\sigma$ on either track separately, and applying permutations simultaneously in the blocks $\llb ki,ki+k-1 \rrb$ (for all $k \in \Z$).

In particular, by conjugating with the shift $\sigma$, we can also apply permutations in blocks $\llb ki+j,ki+j+k-1 \rrb$ offset by any $j$. As a special case, we can apply permutations in blocks $\llb ki+j,ki+j+k-1 \rrb$ on the second track, under some condition on the corresponding block on the first track, and vice versa with the tracks exchanged. Conjugating with partial shifts, we can also condition permutations on one track on information elsewhere on the other track.

We assume $|A| \geq 5$ for minor notational convenience; otherwise use $A^k$ in place of $A$ in the remaining argument. First observe that it is enough to construct for each $\chi \in G$ and $b \in A$ the automorphism $f_{k,[b],\chi}$. Namely, conjugating with partial shifts we obtain $f_{k,U,\chi}$ for basic cylinders $U = [b]_i$. Since $G$ is perfect and the basic cylinders are a base of the Boolean algebra of clopen sets, all automorphisms $f_{k,E,\chi}$ can be built using the formulas
\begin{align*}
f_{k,C,\chi \circ \chi'} &= f_{k,C,\chi} \circ f_{k,C,\chi'}, \\
f_{k,C \cap C', [\chi,\chi']} &= [f_{k,C,\chi}, f_{k,C',\chi'}], \\
f_{k,C \cup C', \chi} &= f_{k,C, \chi} \circ f_{k,C', \chi} \circ f_{k,C \cap C', \chi^{-1}}, \\
f_{k,A^\Z \setminus C, \chi} &= f_{k,A^\Z, \chi} \circ f_{k,C, \chi^{-1}}.
\end{align*}

Now, fix $b \in A$ and $\chi \in G$, and let $\chi_1, \ldots, \chi_\ell$ be the local permutations from the definition of $P_{n,k}$ for $\chi$, in particular $\chi = \prod_{i = 1}^\ell \chi_i$. Consider the automorphism $g_{i,j'}$ that applies $\chi_i$ at $j \in k\Z$ whenever there is a marker on the first track at $j-j'$ but no markers in $\llb j-j'+1, j \rrb$, and the symbol at $j$ on the first track is $b$.

Then $g_{i,j'} \in \CPAut_k[A;B]$: Since $\chi_i$ has even radius $k$ and $B^\Z$ is a full shift, $\chi_i$ simply permutes the set of words in some fixed length-$k$ block by some even permutation. Since $|B^k| \geq 5$, the alternating group acting on words in that block is perfect. Since we can clearly condition an even permutation on a single symbol of the first tape by using the generators of $\CPAut_k[A;B]$, the usual commutator formulas (analogous to the previous paragraph) show we can perform any even permutation in this block, in any context read from the first track. In particular, we can read arbitrary information from the first tape and interpret it through the marker CA $f_M$ (additionally checking the symbol at $j$ is $b$), giving the automorphism $g_{i,j'}$.

By the assumption on $\chi$ and the $\chi_i$, and the fact markers have distance at least $n$, composing the maps $g_{i,j'}$ over $i$, we get the automorphism $g_{j',\chi}$ which applies $\chi$ at $j \in k\Z$ if and only if there is a marker on the first track at $j-j'$ but no markers in $\llb j-j'+1, j \rrb$, and the symbol at $j$ on the first track is $b$ (since the set of positions $j \in k\Z$ where these conditions hold is clearly $(k,n)$-separated).

Composing over $j' = 0, \ldots, 2kn-1$, we obtain the automorphism $h_{b,\chi}$ that fixes the content on the first track, and applies the local permutation $\chi$ on the second track at $j$ whenever the symbol on the first track at $j$ is $b$, and we \emph{see markers nearby}, i.e.\ there a marker on the first track at most $2kn-1$ steps to the left of $j$; and otherwise behaves as the identity.

Now we know how to build the automorphisms $h_{b,\chi}$ for $b \in A$ and $\chi \in G$. We show how to use them in conjunction with Lemma~\ref{lem:Confusing} to build the automorphisms $f_{k,[b],\chi}$. To do this, observe first that by the commutator formulas, we can apply an arbitrary even permutation on the symbol over each occurrence of $D$ in a position in $k\Z$, without affecting any other coordinates, and without modifying the bottom track. Pick $A' = \{b_0, b_1, b_2\} \subset A$, $|A'| = 3$, $b = b_0$. Then $H = \langle h \rangle \cong \Z_3$ acts simply transitively on $A'$ by $h b_j = b_{j+1}$ (indexing modulo $3$).

Consider now a fixed position $j \in k\Z$ on some configuration $(x,y)$, where $\sigma^j(y) \in D$, and $x_j = b$. Let $X \subset A'$ be the set of all $b' \in A'$ such that if we have rotated symbols above all occurrences of $D$ in positions in $k\Z$ by $h \in H$ such that $hb = b'$, then there is a marker on the first track in the interval $\llb j-2n, j \rrb$ (we need the action of $H$ to be simply transitive for this to make sense).

We claim that if $m$ was chosen large enough in the application of the marker lemma, then $X$ is nonempty (and the requirement on $m$ is uniform in the triple $(x,y,j)$). In fact, this cannot even fail for two distinct $b' \in A'$, for large enough $m$. Namely, if we can find triples $(x,y,j)$ where it fails, for arbitrarily large $m$, then by compactness there is a pair of points $x, x' \in A^\Z$ such that both $x$ and $x'$ have period less than $kn$, and $x$ differs from $x'$ in a nonempty sequence of positions $I$ which is $(k,n)$-separated (since occurrences of $D$ at positions in $k\Z$ are $(k,n)$-separated). This contradicts Lemma~\ref{lem:FineWilfTypeThing}.

Now observe that applying $\chi$ on the second track (in any positions in $k\Z$) does not affect the set of occurrences of $D$, and thus we are in the situation of Lemma~\ref{lem:Confusing}: We can apply any individual permutation $\chi \in G$ at $j \in k\Z$, assuming the symbol on the top track contains the symbol $b$, whenever there are markers nearby. For some nonempty subset $X$ of $A'$, after rotating the element above $j$ to some $b' \in X$, we do see markers nearby, and the permutation $\chi$ is performed. Lemma~\ref{lem:Confusing} then implies that for any $\chi \in G$, we can construct the automorphisms that apply the permutation independently of what this set $X$ is. This automorphism then applies $\chi$ at $j \in k\Z$ precisely when the symbol at $j$ on the first track is $b$.

As explained in the fourth paragraph of the proof, the automorphism $f_{k,E,\chi}$ can be built from these automorphisms by the commutator formulas.
\end{proof}

The following is \cite[Lemma~3]{Sa18a}.

\begin{lemma}
\label{lem:UniversalGates}
Let $A$ be a finite alphabet with $|A| \geq 3$. If $n \geq 2$, then every even permutation of $A^n$ can be decomposed into even permutations of $A^2$ applied in adjacent cells. That is, the permutations
\[ w \mapsto w_0 w_1 \cdots w_{i-1} \cdot \pi(w_i w_{i+1}) \cdot w_{i+2} \cdots w_{n-1} \]
are a generating set of $\Alt(A^n)$, where $\pi$ ranges over $\Alt(A^2)$ and $i$ ranges over $0, 1,2,...,n-2$.
\end{lemma}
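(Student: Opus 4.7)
I would prove this by induction on $n$. The base case $n = 2$ is immediate by definition, since the single allowed gate position $i = 0$ already yields all of $\Alt(A^2)$. For the inductive step, let $G \leq \Alt(A^n)$ denote the group generated by adjacent $2$-gates on $A^n$. Gates at positions $0, \ldots, n-3$ act only on the first $n-1$ coordinates and do so uniformly in the value of the last coordinate, so by the inductive hypothesis they generate the diagonal subgroup $L = \{(w,b) \mapsto (\sigma(w),b) : \sigma \in \Alt(A^{n-1})\}$; symmetrically, gates at positions $1, \ldots, n-2$ generate $R = \{(a,w) \mapsto (a,\tau(w)) : \tau \in \Alt(A^{n-1})\}$. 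It then suffices to prove $\langle L, R \rangle = \Alt(A^n)$.

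Transitivity of $\langle L, R \rangle$ on $A^n$ is immediate: since $\Alt(A^{n-1})$ is transitive on $A^{n-1}$ (as $|A^{n-1}| \geq 3$), one uses $L$ to set the first coordinate to any target value and then $R$ (which fixes the first coordinate) to set the remaining coordinates. For primitivity, note that in the inductive step $n \geq 3$, so $|A^{n-1}| \geq 9$ and $\Alt(A^{n-1})$ is $2$-transitive on $A^{n-1}$. Given any $\langle L, R \rangle$-invariant equivalence relation $\sim$ on $A^n$, the $2$-transitivity of $L$ on each of its orbits $A^{n-1} \times \{b\}$ forces the restriction of $\sim$ there to be either equality or the all-equivalent relation; one considers two cases accordingly. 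If some $L$-orbit is entirely one class, that class is $L$-invariant hence of the form $A^{n-1} \times B'$, and the fact that the only $\Alt(A^{n-1})$-invariant partitions of $A^{n-1}$ are the trivial ones forces $B' = A$, i.e.\ $\sim$ is the all-equivalent relation. Otherwise each class meets each $L$-orbit in at most one point, and applying suitable $g \in L$ (using 2-transitivity) produces many more points in a single class inside a single $L$-orbit, contradicting the previous sentence.

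The crux of the inductive step is producing a single $3$-cycle in $\langle L, R \rangle$. For a $3$-cycle $\sigma = (u^0, u^1, u^2) \in \Alt(A^{n-1})$, the lift $\pi_\sigma \in L$ acts on $A^n$ as a product of $|A|$ disjoint $3$-cycles, one per value of the last coordinate. I would choose $\tau \in \Alt(A^{n-1})$ so that the corresponding $\rho_\tau \in R$ disturbs the $\pi_\sigma$-support only above a single value of the last coordinate; the commutator $[\pi_\sigma, \rho_\tau]$ then has support strictly smaller than $\pi_\sigma$, and by iterating this commutator construction, making further commutators with elements of $L$ and $R$ that act on narrower and narrower subsets, one extracts a genuine $3$-cycle on $A^n$. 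With a $3$-cycle available, Jordan's classical theorem (a primitive permutation group containing a $3$-cycle contains the alternating group), combined with the automatic containment $\langle L, R \rangle \leq \Alt(A^n)$ (each gate is an even permutation), yields $\langle L, R \rangle = \Alt(A^n)$.

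The main obstacle is the support-reduction step: naive commutators of elements of $L$ with elements of $R$ rarely land on a single $3$-cycle, and getting the supports of $\sigma$ and $\tau$ to interact in just the right way requires a careful choice respecting the product structure of $A^n$. The flexibility provided by the hypotheses $|A| \geq 3$ and $n \geq 3$ (giving $|A^{n-1}| \geq 9$, hence in particular $2$-transitivity of $\Alt(A^{n-1})$) is precisely what makes this support-isolation possible, and is also what powered the primitivity argument above.
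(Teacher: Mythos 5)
The paper does not prove this lemma: it is cited as \cite[Lemma~3]{Sa18a} without proof, so there is no in-paper proof to compare against. I will therefore evaluate your argument on its own merits.

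Your overall strategy --- induct on $n$, reduce to showing that the ``left'' group $L$ (gates in positions $0,\dots,n-3$) and ``right'' group $R$ (positions $1,\dots,n-2$), each a copy of $\Alt(A^{n-1})$, generate $\Alt(A^n)$, then establish transitivity, primitivity, and a $3$-cycle, and invoke Jordan's theorem --- is sound. Transitivity is fine as you state it. The primitivity argument is correct in spirit, but the sentence claiming ``the only $\Alt(A^{n-1})$-invariant partitions of $A^{n-1}$ are trivial'' forces $B'=A$ is a non sequitur as written: $B'$ is a subset of $A$, not of $A^{n-1}$, and what you actually need is $R$-invariance. Concretely, if one $L$-orbit $A^{n-1}\times\{b\}$ is a single class, then every block has size $\geq |A|^{n-1}$, so there are at most $|A|<|A|^{n-1}$ blocks, forcing \emph{every} $L$-orbit to lie in a single block; hence the block system is $\{A^{n-1}\times B_i\}_i$. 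Reading this under the $R$-factorization $A\times A^{n-1}$, the block $A^{n-1}\times B_i$ becomes $A\times W_i$ with $W_i=\{w\in A^{n-1}:w_{n-2}\in B_i\}$ of size $|A|^{n-2}|B_i|\geq 3$, and $R$-invariance makes $\{W_i\}$ an $\Alt(A^{n-1})$-invariant partition of $A^{n-1}$ into blocks of size $\geq 3$ and $< |A|^{n-1}$ unless $|B_i|=|A|$; primitivity of $\Alt(A^{n-1})$ then forces $B_i=A$. Your Case 2 is also terse but works: take $x\neq y$ in one block with distinct $0$-coordinates (so distinct $L$-orbits; handle equal $0$-coordinates symmetrically using $R$), and apply $\mathrm{Stab}_L(x)$, which fixes the block yet, by $2$-transitivity, sweeps $y$ through $|A|^{n-1}-1>1$ points of a single $L$-orbit --- contradiction.

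The genuine gap, which you yourself flag, is the $3$-cycle extraction: you say ``I would choose $\tau$ so that'' the supports interact cleanly, but you do not exhibit such a $\tau$ or prove the iteration terminates. This is fillable, but it is the crux and should not be left unverified. Here is one choice that works in the inductive step ($n\geq 3$, so $|A^{n-2}|\geq 3$). Fix $c\in A^{\{1,\dots,n-2\}}$ and $b_0\in A$. Let $\sigma\in\Alt(A^{n-1})$ be the $3$-cycle on $\{(j,c):j=0,1,2\}$ (moving only coordinate $0$), so $\pi_\sigma\in L$ has support $\{(j,c,b):j\in\{0,1,2\},b\in A\}$. Let $\tau\in\Alt(A^{n-1})$ be a $3$-cycle $(v^0,v^1,v^2)$ on $A^{\{1,\dots,n-1\}}$ with $v^0=(c,b_0)$ and $v^1,v^2$ chosen so their projections to coordinates $1,\dots,n-2$ differ from $c$ (possible since $|A|^{n-1}-|A|\geq 2$). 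Then $\mathrm{supp}(\pi_\sigma)\cap\mathrm{supp}(\rho_\tau)=I:=\{(j,c,b_0):j\in\{0,1,2\}\}$, and a direct computation shows $\gamma:=[\pi_\sigma,\rho_\tau]$ is the product of two disjoint $3$-cycles, one supported on $I$ and one on $\rho_\tau(I)$. Finally pick $\delta\in L$ lifting an even permutation of $A^{\{0,\dots,n-2\}}$ that swaps $(0,c)\leftrightarrow(2,c)$ and $(0,c')\leftrightarrow(2,c')$ for some $c'\notin\{c,(v^1_1,\dots,v^1_{n-2})\}$; then $\delta$ inverts the first $3$-cycle and fixes $\mathrm{supp}$ of the second pointwise, so $[\gamma,\delta]$ is a genuine $3$-cycle. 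With primitivity and this $3$-cycle in $\langle L,R\rangle\leq\Alt(A^n)$, Jordan's theorem completes the induction.
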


\begin{lemma}
\label{lem:InvolutionConjugate}
If the $n$th program halts, then $\alpha''(n) = \beta''(n)^g$ for some involution $g \in \CPAut_2[A;B]$.
\end{lemma}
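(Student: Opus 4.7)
The strategy is to lift the involution $g' \in \Aut(B^\Z)$ supplied by Lemma~\ref{lem:WhatToDoWhenHalts} to an involution $g \in \CPAut_2[A;B]$ of the form $g(x,y) = (x, g'(y))$. For any such $g$ one checks directly that
\[ \beta''(n)^g(x,y) = (x, (g')^{-1}\beta'(n)g'(y)) = (x, \alpha'(n)(y)) = \alpha''(n)(x,y), \]
and $g$ is an involution iff $g'$ is. So the whole task reduces to realising this particular second-track action inside $\CPAut_2[A;B]$.

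I would build $g$ marker-by-marker. Using Lemma~\ref{lem:WhatToDoWhenHalts}, write $g' = g_1' \circ \cdots \circ g_N'$ as a composition of even marker permutations, where $g_j'$ is the marker CA $\chi_j^{\Z}$ for a self-commuting local permutation $\chi_j \in \LP(B^\Z)$ on a mutually unbordered set $W_j$ and an element of $\Alt(W_j)$. For each $j$, applying Lemma~\ref{lem:constru} with $E = A^\Z$ produces $f_{2,A^\Z,\chi_j} \in \CPAut_2[A;B]$, acting on the bottom track as $\chi_j^{2\Z}$. Since $\sigma = \sigma_1\sigma_2 \in \CPAut_2[A;B]$, a short calculation gives $\sigma f_{2,A^\Z,\chi_j}\sigma^{-1}(x,y) = (x, \chi_j^{2\Z+1}(y))$; self-commutativity of $\chi_j$ makes the two factors commute, and their product is
\[ g_j := f_{2,A^\Z,\chi_j} \circ \sigma f_{2,A^\Z,\chi_j}\sigma^{-1}, \qquad g_j(x,y) = (x, \chi_j^{\Z}(y)) = (x, g_j'(y)). \]
Setting $g := g_1 \circ \cdots \circ g_N$ produces the desired lift, automatically an involution.

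The main obstacle is verifying the hypotheses of Lemma~\ref{lem:constru} for each $\chi_j$: one must exhibit a perfect $G_j \leq \LP(B^\Z)$ with $\chi_j \in G_j$ satisfying property $P_{n,2}$, together with $|A^2|, |B^2| \geq 5$. The alphabet inequalities are secured during the construction of $A,B$ in Definition~\ref{def:Remember}. For $G_j$ I take the group generated by the local permutations realising every element of $\Alt(W_j)$ on occurrences of $W_j$; this is perfect as soon as $|W_j| \geq 5$, which is arranged by choosing large enough mutually unbordered sets, available over any nontrivial alphabet. The witness set is $D_j = [W_j]$: mutual unborderedness prevents any two $W_j$-occurrences at distance less than the marker length, so (i) $D_j$ is $(2,G_j)$-ignorant (no $\Alt(W_j)$-action can create a $W_j$-occurrence nearby without destroying the existing one, which unborderedness forbids) and (ii) $D_j$ is $(2,n)$-aperiodic provided the marker length is at least $2n$. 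Finally, the radius-$2$ decomposition required by $P_{n,2}$ is supplied by Lemma~\ref{lem:UniversalGates}, which factors any even permutation of $B^{m_j}$ into a product of even permutations of $B^2$ in adjacent cells; each such factor lies in $\Alt(B^2)$ and, since $|B^2| \geq 5$, is a single commutator of radius-$2$ local permutations, hence has even radius $2$. The identity $\chi^I = \prod \chi_i^I$ on $(2,n)$-separated $I$ then holds because the applications of the individual factors at well-separated positions act on disjoint intervals.
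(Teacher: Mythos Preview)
Your proof is correct and follows essentially the same approach as the paper: lift the even marker permutations from Lemma~\ref{lem:WhatToDoWhenHalts} to the two-track system via Lemma~\ref{lem:constru} with $E = A^{\Z}$, using $D_j = [W_j]$ and the decomposition from Lemma~\ref{lem:UniversalGates}, then pass from $2\Z$ to $\Z$ via conjugation by the full shift (which is exactly the content of the remark after Definition~\ref{def:fkechi}). One small correction: the bound $|W_j| \geq 5$ is not something you are free to ``arrange'' --- the sets $W_j$ are fixed by the construction in Lemma~\ref{lem:WhatToDoWhenHalts} --- but it does hold, and the paper obtains it from the fourth item of Lemma~\ref{lem:Mn} (namely $|Q| \geq 5$).
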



\begin{proof}
We described the automorphism $g$ in Section~\ref{sec:WhatToDoWhenHalts} conjugating $\alpha'(n)$ and $\beta'(n)$. The conjugating element is composed of even marker permutations on $B^\Z$, and since $\alpha''(n) = \ID \times \alpha'(n)$ and $\beta''(n) = \ID \times \beta'(n)$, it is enough to show that for even marker permutations $f$ on $B^\Z$, $\ID \times f$ is of the form described in the Lemma~\ref{lem:constru}. Set $E = A^\Z$. Observe that all the mutually unbordered sets involved in the construction of $g$ are of cardinality at least $5$ by the fourth item of Lemma~\ref{lem:Mn}. Let $W$ be one of these sets and suppose $W \subset B^m$.

It is enough to apply the marker permutation in even coordinates (see the remark after Definition~\ref{def:fkechi}). Let $G \leq \LP(B^\Z)$ be the copy of the alternating group on $W$ acting by even permutations on words in $W$ occurring in $\llb 0,m-1 \rrb$ (and fixing other points in $B^\Z$). Then letting $n = \lceil m/2 \rceil$, it is easy to see that $G$ has property $P_{n,2}$, by taking as $D$ the clopen set $[W]$, and writing each $\chi \in G$ according to Lemma~\ref{lem:UniversalGates} as a composition of local permutations with radius $2$ at positions $j \in \llb 0, m-2 \rrb$. The property of the $\chi_i$ required for $P_{n,2}$ holds trivially because $\chi_i^j$ and $\chi_i^{j'}$ do not modify any common coordinates when $j \neq j'$ and $j, j' \in 2n\Z$, since $2n \geq m$.
\end{proof}

\subsection{Proof of the main result}

We now prove the main result. The nontrivial implications are
``if the $n$th program halts, then $\alpha(n) = \beta(n)^g$ for some involution $g \in G$''
and
``if the $n$th program does not halt, then $(X, \beta(n))$ does not embed into $(X, \alpha(n))$''.

We have essentially already proved the first statement.

\begin{lemma}
\label{lem:InvolutionConjugate2}
If the $n$th program halts, then $\alpha(n) = \beta(n)^g$ for some involution $g \in G$.
\end{lemma}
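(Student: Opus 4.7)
The plan is simply to transport the conclusion of Lemma~\ref{lem:InvolutionConjugate} through the explicit embedding $\phi : H' \to H(n) \leq G$ that was constructed just before Definition~\ref{def:Remember}. The whole point of defining $H'$ to include the generating set $T$ of $\CPAut_2[A;B]$ (rather than only $\langle \alpha''(n), \beta''(n) \rangle$) was precisely to ensure that whatever conjugator Lemma~\ref{lem:InvolutionConjugate} produces is captured in the domain of $\phi$.

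First I would invoke Lemma~\ref{lem:InvolutionConjugate} to obtain an involution $g \in \CPAut_2[A;B]$ with $\alpha''(n) = \beta''(n)^g$. Since $T \subset H'$ generates $\CPAut_2[A;B]$, we have $\CPAut_2[A;B] \leq H'$, so in particular $g \in H'$. Applying the homomorphism $\phi$ to the conjugacy equation yields
\[
\phi(\alpha''(n)) = \phi(\beta''(n))^{\phi(g)},
\]
which by the definitions $\alpha(n) = \phi(\alpha''(n))$, $\beta(n) = \phi(\beta''(n))$ reads $\alpha(n) = \beta(n)^{\phi(g)}$.

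It remains to verify that $\phi(g)$ is an involution in $G$. Since $\phi$ is a group homomorphism, $\phi(g)^2 = \phi(g^2) = \phi(\ID) = \ID$. If $g = \ID$ then $\alpha''(n) = \beta''(n)$ and the identity element trivially serves as an involution conjugator (or one may conjugate by any involution in $G$, which exists since $G$ contains an isomorphic copy of $\Sym(A^k)$). Otherwise, since $\phi$ is injective, $\phi(g) \neq \ID$, so $\phi(g)$ is a genuine involution. Taking the conjugator to be $\phi(g) \in H(n) \leq G$ completes the proof.

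No step presents a real obstacle here; the lemma is engineered so that the substantive content lives in Lemma~\ref{lem:InvolutionConjugate}, and the transport to $G$ is automatic because the generating set of $H'$ was chosen with foresight. The only bookkeeping point worth highlighting, which will matter when we later argue $G$ has co-NP word problem and an explicit description, is that the conjugator $\phi(g)$ is produced by a computable procedure from $n$, since both the construction of $g$ in Lemma~\ref{lem:InvolutionConjugate} and the explicit embedding $\phi$ of Lemma~\ref{lem:G} are effective.
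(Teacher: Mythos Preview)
Your proof is correct and follows essentially the same approach as the paper: invoke Lemma~\ref{lem:InvolutionConjugate} to get an involution $g \in \CPAut_2[A;B] \leq H'$, then push the conjugacy through the embedding $\phi$. You are in fact slightly more careful than the paper, which concludes only that $\alpha(n)$ and $\beta(n)$ are conjugate in $H$ (hence $G$) without explicitly checking that the image $\phi(g)$ is still an involution; your verification of this point is a small but legitimate addition.
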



\begin{proof}
Recall that $\alpha(n)$ and $\beta(n)$ were built inside $G$ as images of $\alpha''(n)$ and $\beta''(n)$ inside a larger subgroup $H$ isomorphic to
\[ H' = \langle \alpha''(n), \beta''(n), T \rangle \]
where $T$ is the generating set of $\CPAut_2[A;B]$. If the $n$th program halts, then $\alpha''(n) = \beta''(n)^g$ for some involution $g \in \CPAut_2[A;B]$ by Lemma~\ref{lem:InvolutionConjugate}. The group $H'$ contains $\alpha''(n)$, $\beta''(n)$ and the generators of $\CPAut_2[A;B]$, so $\alpha''(n)$ and $\beta''(n)$ are conjugate in $H'$. Therefore $\alpha(n)$ and $\beta(n)$ are conjugate in $H$, thus also in $G$. 
\end{proof}

Now we consider the case where the $n$th program does not halt. We show that in this case, a particular type of subsystem can be found in $(X, \beta(n))$ but not in $(X, \alpha(n))$.

\begin{lemma}
\label{lem:b}
If the $n$th program does not halt, then $(X, \beta(n))$ contains a point~$x$ whose orbit closure contains both fixed points and points of period two. The system $(X, \alpha(n))$ does not.
\end{lemma}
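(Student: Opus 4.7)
The approach is to reduce, via the explicit embedding used to define $\alpha(n)$ and $\beta(n)$ in $G$, to the analogous statements about $\beta'(n)$ and $\alpha'(n)$ acting on $B^\Z$. Since $\alpha''(n)$ and $\beta''(n)$ act trivially on the first track of $(A \times B)^\Z$, it suffices to exhibit a point $y \in B^\Z$ whose $\beta'(n)$-orbit closure contains both a fixed point and a point of period $2$, and to show that no point of $B^\Z$ has this property under $\alpha'(n)$.

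For the $\beta'(n)$-half, I would take $y$ to carry an inactive blinker $(0,0)$ at every position $\leq 0$ and encode, at positions $\geq 1$ (via the conveyor belt interpretation of a right-infinite $A$-segment), the non-periodic $M_n$-configuration $(q, 0, x)$ supplied by Lemma~\ref{lem:Mn}. By construction the only blinker whose activity is ever modified is the one at position $0$, and its activity at time $k$ is the parity of the number of active visits of the head to $M_n$-coordinate $0$ during $\llb 0, k-1 \rrb$. Apply the third bullet of Lemma~\ref{lem:Mn} with $t = t_N := |Q|(2N+3)|S|^{2N+3} + 1$ to obtain windows $\llb t_1, t_2-1 \rrb$ and $\llb t_3+1, t_4 \rrb$ of length at least $t_N$ during which the head avoids $M_n$-coordinate~$0$, with constant blinker activities of opposite parity. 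A pigeonhole argument, using that configurations visited in a length-$t_N$ subinterval of a non-periodic orbit are all distinct and that the number of $M_n$-configurations with head in $\llb -(N+1), N+1 \rrb$ is at most $t_N - 1$, then shows that in each of the two windows there is a time $k$ with $|i_k| \geq N+2$, where $i_k$ denotes the head's $M_n$-coordinate at time $k$. Such a head position lies strictly outside the window $\llb -N, N \rrb$ of $y_k$. Passing to convergent subsequences along times with activity $0$ and activity $1$ separately yields two limit points $y^{\mathrm{fix}}$ and $y^{\mathrm{per}}$: both have inactive blinkers at every position $\leq -1$ and head-free $A$-symbols at $\geq 1$, while the blinker at position $0$ is inactive in $y^{\mathrm{fix}}$ and active in $y^{\mathrm{per}}$. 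A direct check against the steps of $\beta'(n)$ shows that $y^{\mathrm{fix}}$ is fixed and that $y^{\mathrm{per}}$ has period $2$, with only step~$2$ acting nontrivially and flipping the single active blink bit.

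For the $\alpha'(n)$-half, the plan is an invariance argument. Since $\alpha'(n)$ omits step~$4$, both the positions of blinker symbols and the activity bits of those blinkers are orbit invariants of $\alpha'(n)$, and these properties persist under Cantor limits. Every point in the $\alpha'(n)$-orbit closure of any $v \in B^\Z$ therefore shares the blinker-activity pattern of $v$. If $v$ contains an active blinker, no fixed point lies in the closure, because fixed points of $\alpha'(n)$ cannot contain active blinkers (step~$2$ would flip a blink bit). If $v$ contains no active blinker, no period-$2$ point lies in the closure either: the mod-$4$ counter guarantees $M_n^2 \neq \ID$ on every configuration, so step~$1$ cannot act with period $2$ on any segment that contains a head; therefore a period-$2$ point $z$ must be head-free, but then $\alpha'(n)$ acts trivially on $z$, making $z$ fixed rather than of period~$2$. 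Either way, the orbit closure of $v$ does not simultaneously contain a fixed point and a period-$2$ point.

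The step I expect to be the main obstacle is the pigeonhole argument in the $\beta'(n)$-half: turning the qualitative non-periodicity of the chosen $M_n$-orbit into the quantitative statement that the head escapes the bounded window $\llb -(N+1), N+1 \rrb$ within each of the two long activity-constant intervals supplied by Lemma~\ref{lem:Mn}, and simultaneously securing that both parity values appear at such ``far-head'' times. The remaining pieces---including the transfer of orbit-closure statements between $(A \times B)^\Z$ and $X$ through the explicit embedding, which preserves fixed and period-$2$ points of the simulated automorphisms---are routine.
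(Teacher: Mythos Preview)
Your proposal is correct and follows essentially the same approach as the paper. Both arguments use the third item of Lemma~\ref{lem:Mn} to obtain long head-free intervals of opposite blinker activity, a pigeonhole argument to force the head outside any fixed window during those intervals, and the invariance of blinker positions and activity bits under $\alpha'(n)$ to rule out the coexistence of fixed and period-$2$ points in any orbit closure.

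The only differences are cosmetic. You place inactive blinkers at all positions $\leq 0$ (the paper uses a single blinker between two infinite segments); you give an explicit pigeonhole bound $t_N$ (the paper writes $\Omega(\log t)$); and you structure the $\alpha'(n)$ impossibility as a dichotomy on whether $v$ contains an active blinker, whereas the paper argues directly that the orbit cannot simultaneously approximate a fixed point (no active blinker near the origin) and a $2$-periodic point (at least one active blinker near the origin), given that activity bits never change. Your version is slightly more explicit; the paper's is terser but relies on the same invariants.
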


\begin{proof}
First, we prove that $(Y, \beta'(n))$ contains a point $x$ whose orbit closure contains both fixed points and points of period two. Consider a configuration with two infinite conveyor belts, with a single inactive blinker in between, and pick the content on the rightmost conveyor belt to be $z \in A^\N$ such that a head is present, and in the simulated configuration the machine $M_n$ visits coordinate $0$ (the one simulated by the cell on the top track, immediately to the right of the blinker) an odd number of times with long gaps in between, as given by the third property of Lemma~\ref{lem:Mn}. If the head stays away from the origin for $t$ steps before and after the visits, it must travel at least $\Omega(\log(t))$ steps to the right, by a pigeonhole argument (otherwise, the configuration would be periodic).

Let $c_1(t)$ and $c_2(t)$ be the configurations at the time steps where the head is at distance $\Omega(\log(t))$ from the origin before and after a visit of oddly many steps. The central parts of $c_i(t)$ are fixed or $2$-periodic depending on the activity bit of the blinker. Since the blinker changes its activity when the machine visits the origin an odd number of times, as limit points of the $c_i(t)$ as $n \longrightarrow \infty$ we obtain both fixed points and $2$-periodic points.

Next we prove that the system $(Y, \alpha'(n))$ does not contain such points.
To see this, consider an arbitrary configuration $z \in B^\Z$. At any time where $\alpha(n)^t(z)$ is close to a fixed point, every head (if any are present) must be far from the origin (due to the mod-$4$ counter in their state, which is continually incremented), and all blinkers (if any are present) near the origin must be in an inactive state. Similarly, when we are close to a $2$-periodic point, heads are far form the origin, and there is at least one active blinker near the origin. Since the rule never modifies the positions of the blinkers, or their activity bits, this is clearly impossible.

Since the embeddings are explicit, the same proofs apply to $(X, \beta(n))$ and $(X, \alpha(n))$, interpreted on the simulated tapes.
\end{proof}

\begin{proof}[Proof of Theorem~\ref{thm:Main}]
The implication (1) $\implies$ (2) is Lemma~\ref{lem:InvolutionConjugate2}, (2) $\implies$ (3) is trivial. For (3) $\implies$ (4), suppose $\alpha(n) = \beta(n)^g$ for $g \in \Aut(X)$. Then $x \mapsto g(x)$ is an isomorphism between the $\Z^2$-systems $(X, \sigma, \alpha(n))$ and $(X, \sigma, \beta(n))$: it is a homeomorphism and $\sigma(g(x)) = g(\sigma(x))$ since $g$ is an automorphism. By assumption we have
$\alpha(n)(x) = (g \circ \beta(n) \circ g^{-1})(x)$, which implies $\alpha(n)(g(x)) = g(\beta(n)(x))$. The contrapositive of implication (6) $\implies$ (1) is Lemma~\ref{lem:b}. Thus (1) $\implies$ (2) $\implies$ (3) $\implies$ (4) $\implies$ (5) $\implies$ (6) $\implies$ (1). The equivalence of (1) and (7) is clear from the construction.
\end{proof}

\section{Conjugacy for $2V$ and Turing machines}

We now use a similar technique to prove that the conjugacy problem is undecidable in the Brin-Thompson $2V$ \cite{Br04a}. We refer to \cite{Br04a,BeBl14} for the definition of this group. Briefly, it is the subgroup of the homeomorphism group of $(A^\N)^2$ where the action of an element replaces a pair of prefixes (from a finite clopen partition) by another pair of prefixes (not necessarily of the same lengths). It can naturally be seen as a subgroup of the homeomorphism group of $A^\Z$ by writing the two copies of $A^\N$ back-to-back. Recall that this group is finitely-generated (even finitely-presented).

Our construction follows the same general lines as the proof of the main theorem. The $2V$ elements $\alpha(n)$ and $\beta(n)$ simulate the same one-head machine by elements of $2V$, and we also have some blinker symbols on the tape. The difference between $\alpha(n)$ and $\beta(n)$ is that in the case of $\beta(n)$ the blinker bit is flipped when a head touches it (and of course blinkers cannot change their state when the ``head'' is not nearby).

A small additional (mainly expositional) complication in the argument is that we cannot use a prefix-coding to turn moving-tape configurations into $2V$-configurations in a bijective manner (this is what is done in \cite{BeBl14}). Namely, if we did, conjugation might destroy the interpretation and make the argument more difficult. Thus we instead act on concatenations of elements of an unbordered set of words. This necessarily leads to partial tapes, and we thus apply our $2V$ elements on conveyor belts.

The elements $\alpha(n)$ and $\beta(n)$, as well as the conjugating elements, are in fact elements of the group of reversible Turing machines $\mathrm{RTM}(2,1) \leq 2V$ defined in \cite{BaKaSa16}, and the groups $\mathrm{RTM}(a,b)$ with $a \geq 2$ can be dealt with after minor modifications. We refer to \cite{BaKaSa16} for the definition of these groups.

\begin{theorem}
\label{thm:2V}
Let $G$ be the Brin-Thompson group $2V$ or any of the Turing machine groups from \cite{BaKaSa16} with nontrivial tape alphabet. There exist computable functions $\alpha, \beta : \N \to G$ such that the following are equivalent:
\begin{itemize}
\item the $n$th problem halts,
\item $\alpha(n)$ and $\beta(n)$ are of finite order,
\item $\alpha(n)$ and $\beta(n)$ are conjugate by an involution,
\item $\alpha(n)$ and $\beta(n)$ are conjugate.
\end{itemize}
In particular, $G$ has undecidable conjugacy problem.
\end{theorem}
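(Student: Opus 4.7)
The plan is to follow the same structural roadmap as the proof of Theorem~\ref{thm:Main}, but now implementing $\alpha(n)$ and $\beta(n)$ directly as elements of $G$ rather than through the simulation machinery inside $\CPAut_k[A;B]$. Fix the one-head machine $M_n$ from Lemma~\ref{lem:Mn} and a large mutually unbordered set $W \subset A^k$; encode tape cells of $M_n$ and a \emph{blinker} symbol carrying one bit as elements of $W$, and view tape configurations as concatenations of elements of $W$ on $A^\Z$ (for the $2V$ case we identify $A^\Z$ with $A^\N \times A^\N$ by splitting at the origin). Both $\alpha(n)$ and $\beta(n)$ perform one step of $M_n$ on each maximal conveyor belt containing exactly one head; $\beta(n)$ additionally toggles the bit of any blinker that a head crosses. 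Using unbordered-word encoding rather than prefix-coding moving-tape configurations is essential so that the interpretation survives conjugation by arbitrary homeomorphisms in the non-halting case.

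If the $n$th program halts, so $M_n$ is periodic with period $T$, then $\alpha(n)$ has order dividing $T$ (each conveyor belt returns to itself) and $\beta(n)$ has order dividing $2T$ (blinker bits flip at most $T$ times per period and thus recover after doubling). To produce the conjugating involution $g \in G$, I would mimic Lemma~\ref{lem:WhatToDoWhenHalts}: pick a clopen cross-section for the $\alpha(n)$-action via a strict minimality condition in a lexicographic preorder on orbits, and then on each finite orbit pre-toggle the blinker bits according to the parity of head-visits between the orbit representative and the current configuration. The toggling operation is its own inverse, and it is realizable in $2V$ because that group implements every finite clopen permutation; for $\mathrm{RTM}(s,q)$ one must additionally check that the parity-toggling can be staged by a reversible Turing machine that walks across the conveyor belt flipping bits locally, without using the tape-expansion primitives available in $2V$.

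For the converse, assume the $n$th program does not halt. Apply the third item of Lemma~\ref{lem:Mn} to produce a configuration whose head visits the origin an odd number of times with arbitrarily long quiet stretches before and after, and place a blinker at the origin. Under $\beta(n)$, limit points obtained from quiet stretches before and after the odd-visit window differ only in the blinker bit, so the orbit closure of this configuration contains two points that agree everywhere except at one coordinate. Under $\alpha(n)$, blinker bits are constant along orbits, so no such pair can appear in any orbit closure. This is a topological conjugacy invariant of the $\Z$-system $(C,\alpha(n))$, so $\alpha(n)$ and $\beta(n)$ are not conjugate even in $\Homeo(C)$, let alone in $G$. The main obstacle I anticipate is implementing the involution inside $\mathrm{RTM}(s,q)$, where we cannot freely grow or shrink the tape as we can in $2V$; this forces us to realize the cross-section selection and parity-toggling as an honest reversible Turing machine that walks across the conveyor belt using bounded-state memory, rather than as a bulk clopen permutation.
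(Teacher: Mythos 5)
Your architecture --- encode the one-head machine $M_n$ from Lemma~\ref{lem:Mn} on conveyor belts built from a mutually unbordered set of words, add a blinker symbol carrying one bit, and let $\beta(n)$ additionally toggle the blinker when the head crosses it --- matches the paper's. But two ingredients are missing, and both are essential.

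First, you drop the dummy bit from $\alpha(n)$. In the paper, $\alpha(n)$ also flips a dummy bit in the state of $M_n$ under exactly the condition under which $\beta(n)$ flips the blinker bit. You observe yourself that without this $\alpha(n)$ has order dividing $T$ while $\beta(n)$ has order dividing $2T$; and indeed, if there is a finite conveyor belt on which the head crosses the blinker an odd number of times per $M_n$-period (which there is), then $\alpha(n)$ and $\beta(n)$ have orbits of different lengths on the corresponding cycle, so they cannot be conjugate at all, let alone by an involution. The dummy bit forces $\alpha(n)$ and $\beta(n)$ to have equal orbit lengths everywhere, and the conjugating involution records the relative phase of the blinker bit against the dummy bit, as in Lemma~\ref{lem:WhatToDoWhenHalts}. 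Without it, the pre-toggle you describe is ill-defined going once around an odd-parity cycle.

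Second, your non-halting direction does not work: ``the orbit closure contains two points that agree everywhere except at one coordinate'' is not a topological conjugacy invariant --- it refers to the ambient coordinate structure of the Cantor space and is destroyed by simple block conjugacies. The paper explicitly flags, in its closing Questions section, that its $2V$ and RTM proof is not dynamical and does not rule out conjugacy inside $\Homeo(C)$. The actual argument is group-theoretic and specific to $2V$ and the RTM groups: any $\gamma$ in such a group translates a blinker by a uniformly bounded amount. One takes $x$ with a single blinker far to the left, reads $\beta(n)^t(x) = \gamma^{-1}\alpha(n)^t\gamma(x)$, and notes that $\alpha(n)^t$ never changes blinker bits, so the leftmost blinker of $\alpha(n)^t\gamma(x)$ stays near the origin, whereas $\beta(n)^t(x)$ must have its leftmost blinker in state $w'$ at an arbitrarily large distance --- a contradiction with $\gamma^{-1}$ only translating by a bounded amount. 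You need this bounded-translation argument rather than an orbit-closure invariant.

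Finally, your worry about realizing the involution inside $\mathrm{RTM}(s,q)$ is unfounded: when $M_n$ halts its period is bounded, so the cross-section and the pre-toggle are determined by a bounded window around the blinker, and the conjugating element (like $\alpha(n)$ and $\beta(n)$ themselves) is a local reversible transformation that never grows or shrinks the tape.
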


\begin{proof}
We give the proof for $2V$. Consider the one-head machine $M_n$ from Lemma~\ref{lem:Mn} over state set $Q$ and tape alphabet $A$. We need slightly fewer technical properties: the counter modulo $4$ is not needed, and we need only one dummy bit. (The third item, describing a detail about the global behavior, is needed, but does not necessitate any modifications to the construction.)

Let $W \subset \{0,1\}^n$ be a mutually unbordered set of cardinality $|A^2| + 2$, let $V \subset W$ be a subset of cardinality $|A^2|$, and let $U \subset \{0,1\}^n$ be a mutually unbordered set of cardinality $2|Q \times A|$. Our rule is the following: if the word to the right of the origin is not in $U$, we do nothing. If it is in $U$, we read the coding of a conveyor belt from the tape: Interpret the $U$-word at $\llb 0,n-1 \rrb$ as an element of $(Q \times A) \cup (A \times Q)$, giving a state and a tape symbol. Whether the word encodes an element of $Q \times A$ or $A \times Q$ indicates whether the head is on the top or bottom track. Now read a maximal number of $V$-words around this word, i.e.\ in the contents of $(-\infty, -1\rrb$ and $\llb n, \infty)$, and interpret them in the standard conveyor belt fashion: if $x_{\llb n, \infty)} \in V^\omega$, interpret it as two tapes over alphabet $A$, the bottom one being inverted, and if the maximal $V^*$-progression is finite, close it up into a conveyor belt; on the left mutatis mutandis. 

Now, $\alpha(n)$ is the $2V$-element that applies $M_n$ through this interpretation, and, letting $W \setminus V = \{w, w'\}$, if $x_{\llb 0,n-1 \rrb} \in Q \times A$ (through the interpretation) and $x_{\llb-n,-1 \rrb} \in W \setminus V$, additionally flips the dummy bit of the state of $M_n$.

The element $\beta(n)$ does the same, but additionally applies the involution $w \leftrightarrow w'$ in $x_{\llb-n,-1\rrb}$, i.e.\ interpreting elements of $W \setminus V$ as blinkers with blinker bit $w \sim 0, w' \sim 1$, the blinker bit is flipped if (through the interpretation) the head visits the cell to the right of it, on the top track.

It is easy to see that $\alpha(n)$ and $\beta(n)$ are periodic if and only if $M_n$ is (and otherwise neither one is): If $M_n$ is not periodic then $\alpha(n)$ and $\beta(n)$ have arbitrarily large orbits by considering configurations of the form $V^{-\omega} . U V^{\omega}$ encoding legal configurations on the top track. If $M_n$ is periodic, this bounds the period on all large enough simulated tracks, and on smaller ones there is a bound on the period by the pigeonhole principle.

Now observe that $\alpha(n)$ and $\beta(n)$ are conjugate by an involution in $2V$ when $M_n$ halts: as in the proof of Lemma~\ref{lem:WhatToDoWhenHalts}, the conjugating element simply adjusts the value of the blinker bits (based on some clopen choice of orbit representatives as in Section~\ref{sec:WhatToDoWhenHalts}). Because of the dummy bit in the state of the machine, the periods of $\alpha(n)$ and $\beta(n)$ are the same, and this process is well-defined.

If the $n$th program does not halt, then using the third item of Lemma~\ref{lem:Mn} we can find configurations $x \in A^\Z$ where the blinker bit is far to the left, and in the iteration of $\beta(n)$ we go flip it, and come back. More precisely, for some $x, x', t$ we have $\beta(n)^{t}(x) = x'$ such that exactly one blinker occurs in $x$ (resp.\ $x'$) at $-i$ (resp.\ $-j$), for some large $i, j$, and we have
\[ x_{\llb -i,-i+n-1 \rrb} = w \neq w' = x'_{\llb-j,-j+n-1\rrb}. \]

Now, suppose $\beta(n) = \alpha(n)^\gamma$. Then we have
\begin{equation} \beta(n)^{t}(x) = (\alpha(n)^{\gamma})^{t}(x) = \gamma^{-1} \circ \alpha(n)^{t} \circ \gamma(x). \label{eq:tconju} \end{equation}
Thinking about the leftmost blinker (the leftmost word in the set $\{w, w'\}$) of intermediate configurations, this easily gives a contradiction: If $i$ is large enough, then the first application of $\gamma$ cannot flip the leftmost blinker bit (which is at $\llb -i,-i+n-1 \rrb$), and can only translate it by a bounded amount.

Since the leftmost blinker in $\beta(n)^t(x)$ is in state $w'$, the only way we can have $(\alpha(n)^{\gamma})^{t}(x) = \beta(n)^{t}(x)$ is that the leftmost blinker in $\alpha(n)^{t} \circ \gamma(x)$ is at a bounded distance from the origin. Taking $j$ large, we conclude that the leftmost blinker in $\gamma^{-1} \circ \alpha(n)^{t} \circ \gamma(x)$ may not simultaneously be in $\llb-j,-j+n-1\rrb$, and be in the correct state $w'$. This contradicts the existence of $\gamma$.
\end{proof}

\section{The word problem}

In this section, we pinpoint the difficulty of the word problem for f.g.\ subgroups of automorphism groups of full shifts, since this complexity-theoretic statement does not seem to appear anywhere, and because the word problem is the first problem on Dehn's list whose second item our paper concerns. In \cite{GuJeKaVa18}, a similar method is used to show that the word problem of a finitely-generated subgroup of the automorphism group of a two-dimensional subshift of finite type can be undecidable.

It was proved in \cite{BoLiRu88} that the word problem is decidable for f.g.\ subgroups of $\Aut(A^\Z)$, and in \cite{KiRo90} a more quantitative statement was given, from which a complexity-theoretic upper bound can be extracted. To our knowledge, neither the co-NP upper bound nor completeness appear in the literature. 

A \emph{partial word} is a word $w \in (A \cup \{\bla\})^n$ for some $n$. Let $u \in A^n$ be a word. We say \emph{$w$ matches $u$ somewhere} if $\exists i: w_i = u_i$. In the following complexity-theoretic problem, sets of words are given in uncompressed format, as a list.

\begin{lemma}
\label{lem:NPcomplete}
Let $A$ be a fixed alphabet with $|A| \geq 2$. Given a set of partial words $W \subset (A \cup \{\bla\})^n$, it is it is NP-complete to determine whether there is a word $u \in A^n$ such that every $w \in W$ matches $u$ somewhere.
\end{lemma}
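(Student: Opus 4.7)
The plan is a direct polynomial-time reduction from 3-SAT, together with a one-line verification argument for NP membership. Given $W \subset (A \cup \{\bla\})^n$, membership in NP is clear: guess $u \in A^n$ (of size $n$, linear in the input), and for each $w \in W$ scan the positions to check whether some $i$ satisfies $w_i = u_i$; this verification takes time $O(n |W|)$.

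For NP-hardness, fix distinct symbols $a_0, a_1 \in A$, which exist since $|A| \geq 2$. Given a 3-CNF formula $\phi$ with variables $x_1, \ldots, x_n$ and clauses $C_1, \ldots, C_m$, build partial words $w_1, \ldots, w_m \in (A \cup \{\bla\})^n$ by setting $(w_j)_i = a_1$ if $x_i$ occurs positively in $C_j$, $(w_j)_i = a_0$ if $x_i$ occurs negatively in $C_j$, and $(w_j)_i = \bla$ otherwise. This transformation is computable in time $O(nm)$. The intended correspondence is $u_i = a_1 \leftrightarrow x_i = \textrm{true}$, so that ``$w_j$ matches $u$ at $i$'' encodes ``the literal of $C_j$ on $x_i$ is satisfied''.

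Correctness in the forward direction is immediate: a satisfying assignment $\alpha$ for $\phi$ yields $u \in \{a_0, a_1\}^n$ with $u_i = a_1 \iff \alpha(x_i) = \textrm{true}$, and in each $C_j$ the true literal on some $x_k$ forces $(w_j)_k = u_k$. For the converse, given $u \in A^n$ matching every $w_j$, define $\alpha(x_i) = \textrm{true}$ iff $u_i = a_1$, assigning positions with $u_i \notin \{a_0, a_1\}$ arbitrarily. For each $C_j$, pick a matching position $k$: since all non-$\bla$ entries of $w_j$ lie in $\{a_0, a_1\}$ and $(w_j)_k = u_k \in A$, we must have $u_k \in \{a_0, a_1\}$ and $x_k$ appears in $C_j$ with the polarity making $\alpha$ satisfy $C_j$. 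The only mild subtlety I anticipate is exactly this point: when $|A| > 2$, a witness $u$ need not lie in $\{a_0, a_1\}^n$, but positions with $u_i \notin \{a_0, a_1\}$ are inert for matching, so each clause is necessarily satisfied at a ``useful'' position and the reduction works uniformly for all $|A| \geq 2$. I do not foresee any deeper obstacle.
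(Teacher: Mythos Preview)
Your proof is correct and follows essentially the same approach as the paper: both reduce from 3-SAT by fixing two distinguished alphabet symbols to encode truth values, letting positions correspond to variables and each clause correspond to a partial word whose non-blank entries sit at the relevant variable positions with the appropriate symbol. Your treatment is in fact slightly more careful than the paper's, since you explicitly handle the case $|A|>2$ by observing that positions with $u_i\notin\{a_0,a_1\}$ cannot witness a match.
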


\begin{proof}
The problem is clearly in NP, indeed one can verify that a word $u$ satisfies the partial words linear time. For completeness, one can easily program $3$-SAT by having the positions of $u$, starting indexing with zero, represent variables (some $0 \in A$ represents $\bot$, and some $1 \in A$ represents $\top$). The clause $q_0 x_{j_0} \vee q_1 x_{j_1} \vee \cdots \vee q_k x_{j_k}$ where the $x_i$ are variables and the $q_i \in \{-, +\}$ representing a positive or negative term, is represented by the partial word
\[ \bla^{j_0} b_0 \bla^{j_1-j_0-1} b_2 \cdots \bla^{j_k-j_{k-1}-1} b_k \bla^{n-j_k-1} \]
where $b_i = 1 \iff q_{j_i} = +$.
\end{proof}

\begin{theorem}
\label{thm:WP}
Let $\mathcal{D}$ be the class of (locally finite)-by-$\Z$ groups. For any nontrivial alphabet $A$, the word problem of the group $\Aut(A^\Z)$ is $\mathcal{D}$-eventually locally co-NP-complete.
\end{theorem}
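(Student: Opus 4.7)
The plan is to split the theorem into a uniform co-NP upper bound and a specific co-NP-hardness witness. First I would show that for every finitely-generated $H' \leq \Aut(A^\Z)$ the word problem is in co-NP: a word $w$ of length $\ell$ in generators of radius at most $r$ defines an RCA of radius at most $\ell r$, so $w \neq \ID$ iff some window $x \in A^{2\ell r + 1}$ satisfies $f_w(x)_0 \neq x_0$, and computing $f_w(x)_0$ from such a certificate takes polynomial time by sliding a shrinking window through the letters of $w$ and applying one generator's local rule at a time. Second, I would exhibit one finitely-generated (locally finite)-by-$\Z$ subgroup $H \leq \Aut(A^\Z)$ whose word problem is already co-NP-hard; since each generator of $H$ rewrites as a fixed word in the generators of any f.g.\ $H' \geq H$, giving a linear-time reduction, this hardness propagates to every such $H'$ and combined with the upper bound yields co-NP-completeness.

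To construct $H$, I would work over a two-track alphabet $A = A_0 \times \{0,1\}$ and generate $H$ by the partial shift $\sigma_1$ on the first track together with finitely many cellwise symbol permutations $f_{\pi_1},\ldots,f_{\pi_k}$ that only flip the second-track bit, conditionally on the first-track symbol. Each conjugate $\sigma_1^j f_{\pi_i} \sigma_1^{-j}$ is then an involution touching only the second track under a first-track condition, so these conjugates pairwise commute; hence their normal closure $N$ in $H$ is a locally finite elementary abelian $2$-group, and the shift power gives $H/N \cong \Z$, placing $H$ in $\mathcal{D}$. The heart of the argument is to reduce the partial-word matching problem of Lemma~\ref{lem:NPcomplete} to non-triviality in $H$: given $W = \{w_1,\ldots,w_m\} \subset (A_0 \cup \{\bla\})^n$, I would build a polynomial-length word $v_W$ in the generators of $H$ such that $v_W$ flips the second-track bit at coordinate $0$ precisely on configurations whose first-track window $x_{\llb 0, n-1 \rrb}$ matches every $w_i$ somewhere, so that $v_W \neq \ID$ iff the matching instance is positive.

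The main obstacle will be this encoding, because the shift-conjugates of the $f_{\pi_i}$ naturally compose into $\mathbb{F}_2$-linear combinations of first-track bits, whereas ``matches somewhere'' is a disjunction and the joint matching condition is a conjunction of such disjunctions. I would overcome this either by enriching the generators with a symbol permutation whose commutators implement AND-gates (the standard commutator-encoded boolean circuit trick used for co-NP-hard word problems in related subgroups of $\Aut(A^\Z)$, cf.\ \cite{GuJeKaVa18}), or by reserving an accumulator bit on the second track and writing $v_W$ as a straight-line program that tests and records clause satisfaction one clause at a time. In either variant the added generators remain cellwise permutations of finite order that only modify the second track, so $N$ stays a locally finite group of commuting involutions and $H \in \mathcal{D}$. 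Once $v_W$ is built, $v_W \neq \ID$ iff a common matching word $u$ exists, closing the co-NP-hardness reduction and, with the upper bound, the proof.
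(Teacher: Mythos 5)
Your decomposition and upper bound argument are exactly the paper's: guess a window of size $O(\ell r)$ where the composed local rule acts nontrivially, verify in polynomial time, so the word problem of any f.g.\ subgroup is in co-NP; and hardness for one witness $H$ propagates to any f.g.\ supergroup by rewriting generators. The target of the reduction (Lemma~\ref{lem:NPcomplete}) is also right.

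The gap is in the hardness witness. Over $A_0 \times \{0,1\}$ with conditional bit-flips as generators, every shift-conjugate $\sigma_1^j f_{\pi_i}\sigma_1^{-j}$ has order two and they all commute, so every product of them computes an $\mathbb{F}_2$-\emph{linear} function of first-track indicator bits. No such product can be the ``flip iff the window satisfies the CNF'' involution, because that predicate is not affine over $\mathbb{F}_2$; so the reduction cannot be carried out in this group, and no amount of cleverness in ordering the factors (your ``accumulator'' variant) can fix this while keeping the letters commuting involutions. You correctly spot this as the obstacle, but your proposed repair ``enriching the generators with a symbol permutation whose commutators implement AND-gates'' is incompatible with your own claim in the next sentence that ``$N$ stays a locally finite group of commuting involutions.'' Barrington's trick needs a perfect permutation group on the second track; the paper takes the second track over an alphabet $B$ with $|B|\geq 5$ so that $\Alt(B)$ is perfect, defines $f_{a,\pi}$ for $a\in A$ and $\pi\in\Alt(B)$, and uses $[f_{U,\pi},f_{V,\pi'}] = f_{U\cap V,[\pi,\pi']}$ plus De Morgan to build $f_{W,\pi}$ with polynomial word norm. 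The kernel $N$ of the map to $\Z$ is then locally finite but certainly non-abelian (still fine for membership in $\mathcal{D}$, which only needs (locally finite)-by-$\Z$). So to close the argument you must: replace the $\{0,1\}$ second track by a $\geq 5$-letter alphabet, drop the commuting-involutions claim, and actually run the Barrington-style word-length accounting to get a polynomial-size word --- none of which is an automatic consequence of the setup you wrote.
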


Here, we use similar terminology as with the conjugacy problem. The meaning should be clear: the word problem is co-NP-complete ``starting from'' some f.g.\ (locally finite)-by-$\Z$ group (in particular, it is always in co-NP).

\begin{proof}
To see that the word problem of any finitely-generated group in $\Aut(A^\Z)$ is in co-NP, consider $g = g_1 \circ g_2 \circ \cdots \circ g_n$ over the generators, and let $R$ be the maximal radius of the local rules of the generators. Then $g$ admits a local rule with radius at most $Rn$. To prove $g$ does not represent the identity, guess a word of length $2Rn+1$ on which its local rule acts nontrivially, and test this by evaluating the local rules of the $g_i$. This proves that not-representing-the-identity is in NP. This is just the complement of the word problem, which is then in co-NP.

For (eventual) co-NP-hardness, pick the alphabet to be $A \times B$ where $A = \{0,1\}$ and $|B| \geq 5$. As generators of our group $G$ take the partial shift $\sigma_1$ on the first track, and for each even permutation $\pi \in \Alt(B)$ and $a \in A$ take the automorphism $f_{a,\pi}$ defined by
\[ f_{a,\pi}(x,y)_i = \left\{\begin{array}{ll}
(x_i, \pi(y_i)) & \mbox{if } x_i = a, \\
(x_i, y_i) & \mbox{otherwise.}\end{array}\right. \]
This group is (locally finite)-by-$\Z$: there is a homomorphism to $\Z$ defined by $\pi(\sigma_2) = 1$, $\pi(f_{a,\pi}) = 0$. Every element in the kernel simply permutes the second track based on nearby contents of the top track, leading to local finiteness.

We now mimic the proof of Barrington's theorem \cite{Ba89}: To each $U \subset \{0,1\}^n$ and $\pi \in \Alt(B)$, associate the automorphism $f_{U,\pi}$ defined by
\[ f_{U,\pi}(x,y)_i = \left\{\begin{array}{ll}
(x_i, \pi(y_i)) & \mbox{if } x_{\llb i,i+n-1 \rrb} \in U, \\
(x_i, y_i) & \mbox{otherwise.}\end{array}\right. \]
For the cylinder set $U_{i,a} = \{u \;|\; u_i = a\} \subset \{0,1\}^n$, we have $f_{U,\pi} = f_{a,\pi}^{\sigma_2^i}$, thus $f_{U,\pi}$. Now, given a set of partial words $W \subset (A \cup \{\bla\})^n$, using De Morgan's laws, the identity
\[ [f_{U,\pi}, f_{V,\pi'}] = f_{U \cap V, [\pi, \pi']} \]
and the fact every element of $\Alt(B)$ is a commutator, a short calculation shows that $f_{W, \pi}$ for any permutation $\pi \in \Alt(B)$ is in $G$ and has polynomial word norm with respect to the length of $W$. Clearly if $\pi$ is nontrivial, $f_{W,\pi}$ is a positive instance for the word problem of $G$ (i.e.\ is the identity map) if and only if $W$ is a negative instance for the problem in Lemma~\ref{lem:NPcomplete}. Thus, the word problem of $G$ is co-NP-complete.
\end{proof}

We make the (mainly terminologically) interesting remark that, if $\mathrm{P} \subsetneq \text{co-NP} \subsetneq \mathrm{PSPACE}$, then the classes of automatic groups, cellular automata groups and automata groups are (eventually) separated from each other by the complexity of their word problems, in light of the above theorem, \cite{Ep92,DARoWa17}.

Topological full groups of full shifts embed in their automorphism groups (but not vice versa, as can be deduced from \cite{BaKaSa16}). The above construction can be naturally seen as a subgroup of the topological full group.

\begin{theorem}
\label{thm:topofullgroup}
Let $\mathcal{D}$ be the class of (locally finite)-by-$\Z$ groups. If $C$ is a nontrivial finite alphabet, the topological full group of $C^\Z$ has $\mathcal{D}$-eventually locally co-NP-complete word problem. 
\end{theorem}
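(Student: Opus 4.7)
Plan for Theorem~\ref{thm:topofullgroup}.

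The co-NP upper bound is obtained exactly as in Theorem~\ref{thm:WP}. An element of the topological full group of $C^\Z$ is given by $\phi(x)=\sigma^{k(x)}(x)$ for a locally constant $k\colon C^\Z\to\Z$; if the generators have shift functions of radius $R$ and values bounded by $M$, a product of $n$ generators has shift function of radius $O(n(R+M))$ with values in $\llb -nM,nM\rrb$. Non-triviality is witnessed by a single local window of polynomial length, so the complement of the word problem is in NP and the word problem itself is in co-NP.

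For the co-NP-hard subgroup, I would mimic the construction in the proof of Theorem~\ref{thm:WP} but replace the symbol permutations $f_{a,\pi}$ by topological-full-group involutions. Since elements of the topological full group cannot change symbols, the role of the second-track permutations must be played by involutions that swap clopen cylinders via local shifts. The cleanest encoding is to enlarge the alphabet $C$ so that each ``virtual cell'' corresponds to a length-$n_0$ block of $C$ in which a control bit $a\in\{0,1\}$ is recorded by the first symbol of the block and a data value $b\in B$ (with $|B|\geq 5$) is recorded by the position of a distinguished marker among the remaining $n_0-1$ symbols. An even permutation $\pi\in\Alt(B)$ acting on $b$ is then literally a permutation of the marker's position, realized as a finite composition of piecewise shifts of length at most $n_0$ on the appropriate cylinder sets, and hence lies in the topological full group. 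Take the generators of $G$ to be a block shift $s$ translating configurations by $n_0$ positions (the analog of $\sigma_1$) together with the involutions $\tau_{a,\pi}$ performing the marker permutation conditioned on the first symbol of the current block being $a$.

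With these generators, the Barrington-style argument transports essentially verbatim: conjugation by $s^i$ shifts the position at which the control bit is read, iterated commutators such as $[\tau_{a,\pi},\tau_{a',\pi'}^{s^i}]$ refine the conditioning to arbitrary cylinders of control bits, and De Morgan's laws assemble, from an arbitrary set of partial words $W$, an element $\tau_{W,\pi}$ of polynomial word norm in $|W|$ whose triviality is equivalent to $W$ being a negative instance of Lemma~\ref{lem:NPcomplete}. The group $G$ is (locally finite)-by-$\Z$ via the homomorphism sending $s\mapsto 1$ and each $\tau_{a,\pi}\mapsto 0$: its kernel is generated by the bounded-support involutions $\tau_{a,\pi}$ and their $s$-conjugates, and on any fixed radius these generate a finite group. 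The main obstacle is purely combinatorial bookkeeping: one must choose the block encoding and marker positions so that each $\tau_{a,\pi}$ is a genuine piecewise shift rather than a symbol-swap, and verify that the commutator and conjugation identities used in the Barrington reduction remain valid in this setting; once the encoding is fixed, the rest of the argument is formally identical to that of Theorem~\ref{thm:WP}.
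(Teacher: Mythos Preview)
Your co-NP upper bound is correct and matches the paper's argument.

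For co-NP-hardness, your encoding has a genuine gap. You propose to record the data value $b\in B$ as the position of a distinguished marker \emph{symbol} within a block, and to realize $\pi\in\Alt(B)$ by piecewise shifts that move this marker. But elements of the topological full group are maps of the form $x\mapsto\sigma^{k(x)}(x)$: they translate the entire configuration rigidly and cannot alter the relative positions of symbols. If blocks are delimited by symbols in the configuration, then the position of a marker symbol within its block is a shift-invariant quantity, so no composition of piecewise shifts can change it. You correctly flag that each $\tau_{a,\pi}$ must be ``a genuine piecewise shift rather than a symbol-swap'', but the encoding you describe forces exactly a symbol-swap. A secondary issue is that you propose to enlarge $C$, whereas the statement must hold for every nontrivial $C$.

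The paper resolves both issues by encoding $b$ not by a symbol's position but by the position of the \emph{origin} relative to a rigid pattern on the tape. It first observes that the group built in Theorem~\ref{thm:WP} embeds in the reversible finite-state automaton group $\RFA(\Z,2,5)$ of \cite{BaKaSa16}: the $A$-track becomes the tape, the $B$-track becomes the machine's internal state, $\sigma_1$ becomes head movement, and $f_{a,\pi}$ becomes a state permutation conditioned on the tape symbol. Then $\RFA(\Z,n,k)$ is embedded in the topological full group of $C^\Z$, for arbitrary nontrivial $C$, by choosing a mutually unbordered set $U\subset C^\ell$ with $|U|=n^2$ and $\ell\geq 2k$: maximal runs of $U$-words are interpreted as conveyor belts encoding the tape, and the offset of the origin within the current $U$-word (among the first $2k$ positions) encodes the state. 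A state change is then a short shift within the current word, a head move is a shift along the conveyor belt, and on configurations where the $U$-pattern is absent the element acts trivially.
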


\begin{proof}
It is well-known that topological full group elements can be represented by local rules, and it can be seen that their word problem is in co-NP analogously to the previous theorem, by simply applying a composition of local rules to the co-NP witness (a word where the rule applies nontrivially).

For the co-NP-completeness, we recall the notion of a (generalized) reversible finite-state machine from \cite{BaKaSa16}, which generalizes the topological full group. Letting $A = \{0,1,...,n-1\}$, $B = \{0,1,...,k-1\}$, the group of finite-state machines $\RFA(\Z,n,k)$ was defined in \cite{BaKaSa16} as the set of self-homeomorphisms of $f : A^\Z \times B \to A^\Z \times B$ such that for some clopen set $D \subset A^\Z \times B$ and continuous map $c : D \to \Z \times B$, whenever $\i(x,b) = (n,b')$ we have $f(x, b) = (\sigma^n(x),b')$. The interpretation is that $B$ is the \emph{set of states} of the finite-state machine, $A$ is the \emph{tape alphabet}, and the function $c$ gives the movement of the ``head'' and the state transitions.

The group defined in the previous theorem is clearly isomorphic to a subgroup of $\RFA(\Z,2,5)$, translating permutations of $B$ into permutations of the state and $\sigma_1$ into movement of the head. It is easy to see that $\RFA(\Z,n,k)$ embeds in the topological full group of any full shift $C^\Z$: Again let $|A| = n, |B| = k$, and let $U \subset C^\ell$ be a set of mutually unbordered set of words with $\ell \geq 2|B|$ and $|U| = |A^2|$.

Now, interpret words in $U$ as coding two symbols over $A$, and allocate the first $2|B|$ positions in these words for coding the current state. As always, interpret maximal runs of words in $U$ as conveyor belts. It is easy to see how to translate $c$ into the local rule of a topological full group element; when the head moves, we move along the conveyor belt, and when the state changes, we change the position inside the current $U$-word. (In other situations, we do nothing.)
\end{proof}

The above proof works directly also for the larger group of reversible Turing machines defined in \cite{BaKaSa16}, showing its f.g.\ subgroups have co-NP word problems (and the group constructed in the previous theorem has a co-NP-complete word problem).

\section{Questions}

Our main theorem is not based on entropy, and thus the computability of entropy stays open.

\begin{question}
Given $f,g \in \Aut(A^\Z)$, is it decidable whether $h_{\mathrm{top}}(f) = h_{\mathrm{top}}(g)$? Is $h_{\mathrm{top}}(f)$ computable? Is it computable from $f$? Is there a simple characterization of the set of entropies? Can an entropy separation be included in Theorem~\ref{thm:Main}?
\end{question}

One motivation for the last subquestion is that such a variant of Theorem~\ref{thm:Main} would also prove that it is undecidable whether one RCA is a factor of another (as a dynamical system), which seems difficult to obtain from our method.

The partial shifts and symbol permutations (Figure~\ref{fig:Example}, Corollary~\ref{cor:Partitioned}) provide a finitely-generated group $G$ which is very easy to define through its action, with undecidable conjugacy problem. This group has word problem in co-NP, so it is not too complicated. However, it is not so clear what subset of $\Aut(A^\Z)$ the group $G$ actually covers.

\begin{question}
Is there a f.g.\ explicitly f.g.-universal group $G \leq \Aut(A^\Z)$ whose membership problem is decidable? (= given $f \in \Aut(A^\Z)$, it is decidable whether $f \in G$.) Is membership decidable for $\PAut[A;B]$ for nontrivial alphabets $A,B$ with $\max(|A|,|B| \geq 3$?
\end{question}

When $|A| = |B| = 2$, $\PAut[A;B] \cong \Z_2^2 \rtimes \mathrm{GL}(2, \Z_2[\textbf{x}, \textbf{x}^{-1}])$ by a natural action \cite{Sa18a}, and from this it is clear that membership in $\PAut[A;B]$ is decidable. However, this group is not even f.g.-universal (because it is a linear group), let alone explicitly so.

It can be shown that $F_2 * F_2 \leq \Aut(A^\N)$ at least for large enough $A$, i.e.\ thus there are finitely-generated groups of one-sided cellular automata with undecidable conjugacy problem.

\begin{question}
Let $|A| \geq 3$. Is conjugacy decidable in $\Aut(A^\Z)$? Does $\Aut(A^\N)$ have eventually locally undecidable conjugacy? Is dynamical conjugacy (conjugacy over $\Homeo(A^\N)$) decidable in this group? What about $\Aut(X)$ for other one-sided subshifts $X$?
\end{question}

The asynchronous rational group has eventually locally undecidable conjugacy problem, as these can simulate automorphisms two-side cellular automata. Synchronous automata, can only simulate one-sided cellular automata, so we do not obtain results for them. Let $A$ be a finite alphabet.

\begin{question}
Let $A$ be a finite alphabet, and let $G$ be the group of all finite-state transductions (the join of all automata groups) on $A^\N$. Is conjugacy in $G$ decidable? Does $G$ have eventually locally undecidable conjugacy problem? Is dynamical conjugacy decidable in $G$?
\end{question}

Our proof of undecidability of conjugacy in the case of $2V$ and Turing machines is not dynamical; while it seems unlikely that the homeomorphisms $\alpha(n)$ and $\beta(n)$ built in the proof are conjugate in $\Homeo(\{0,1\}^\Z)$, the proof does not show this, and the heads could in principle accidentally emulate the ``blinkers'' by some data they are allowed to modify. It seems likely that this possibility can be ruled out by taking a more careful look at properties of \cite{KaOl08} (and possibly modifying the way the blinkers work), but this is beyond the scope of the paper.

\begin{conjecture}
Let $C$ be Cantor space and $G \leq \Homeo(C)$ be either $2V$, or the TMH or TMT action of one of the groups of Turing machines from \cite{BaKaSa16}. Then $G$ has a finitely-generated subgroup $H \leq G$ such that for pairs $(f,g) \in H^2$, conjugacy in $H$ is recursively inseparable from non-conjugacy in $\Homeo(C)$.
\end{conjecture}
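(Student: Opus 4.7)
The plan is to strengthen Theorem~\ref{thm:2V} by establishing a purely dynamical (hence topological) invariant that separates $\alpha(n)$ from $\beta(n)$ when the $n$th program does not halt. Given this strengthening, the recursive inseparability is formal: the computable map $n \mapsto (\alpha(n),\beta(n))$ many-one reduces the halting problem to the disjoint pair $\{(f,g) \in H^2 : f \sim_H g\}$ and $\{(f,g) \in H^2 : f \not\sim_{\Homeo(C)} g\}$, so any recursive separator would decide halting. We take $H$ to be the finitely-generated subgroup generated by the boundedly many generators needed to write down all $\alpha(n)$, $\beta(n)$ and the conjugating involutions produced in the proof of Theorem~\ref{thm:2V}; a concrete choice lives inside $\mathrm{RTM}(2,1) \leq 2V$, and the same construction transports to the TMH/TMT actions of the Turing-machine groups of \cite{BaKaSa16}.

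To make a suitable invariant available, I would first modify the construction by promoting the single blinker bit of Theorem~\ref{thm:2V} to a pair (\emph{activity} bit, \emph{blink} bit), in direct analogy with the cellular-automaton construction of Section~\ref{sec:WhatToDoWhenHalts}: under both $\alpha(n)$ and $\beta(n)$ the blink bit flips at every step iff that cell's activity bit is on, and only $\beta(n)$ additionally flips the activity bit when the head visits the cell to the right of the blinker. Both rules are realizable by elements of $2V$ and of the Turing-machine groups, since these groups can perform arbitrary prefix-permutations in a finite clopen partition. The decisive feature is that, under $\alpha(n)$, the activity bit at each cell is an invariant along every orbit, while under $\beta(n)$ it is not.

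The invariant I then propose is the one used in Lemma~\ref{lem:b}: $(C,\beta(n))$ contains a point whose orbit closure has both a fixed point and a period-two point of $\beta(n)$, while $(C,\alpha(n))$ contains none. The $\beta(n)$ side follows from the third item of Lemma~\ref{lem:Mn} exactly as in Lemma~\ref{lem:b}: start the head far from an inactive blinker, make a long excursion (the configuration near the blinker resembles a fixed point), visit the blinker an odd number of times to toggle its activity, then another long excursion (now the configuration resembles a period-two point). The $\alpha(n)$ side uses activity-invariance: a fixed-point limit forces all activity bits to be off, a period-two-point limit forces at least one to be on, and the activity pattern is constant along the orbit, so the two limits cannot coexist in a single $\alpha(n)$-orbit closure. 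Since coexistence of fixed and period-two limits in one orbit closure is preserved by every conjugating homeomorphism, this genuinely separates the two systems in $\Homeo(C)$.

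The principal obstacle is making the invariant work in the TMT model, where the tape can shift during the action, so ``activity bit at a given cell'' is not immediately well-defined across iterates. Here I would exploit the mod-$4$ counter built into Lemma~\ref{lem:Mn}: together with the fact that the head moves at most once in four steps, it pins down the relative geometry between head and blinker well enough that ``activity bit of the cell currently carrying the blinker pattern'' becomes dynamically meaningful and invariant under $\alpha(n)$. A secondary point is ensuring that the enriched blinker machinery still fits inside a single f.g.\ subgroup of each target group; this is routine since $2V$ is finitely generated and each of the Turing-machine groups admits natural f.g.\ subgroups realizing the required local rules.
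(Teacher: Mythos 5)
This statement is a \emph{conjecture} in the paper, not a theorem---the author explicitly writes that the proof of Theorem~\ref{thm:2V} is ``not dynamical'' and that ruling out accidental $\Homeo(C)$-conjugacy ``is beyond the scope of the paper.'' So there is no paper proof to compare against; I assess your proposal on its own terms, and it has a genuine gap.

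The central step of your plan---promoting the blinker to an (activity bit, blink bit) pair and having the blink bit flip \emph{at every step} iff the activity bit is on---is not realizable in $2V$ or in the Turing-machine groups of \cite{BaKaSa16}. An element of $2V$ permutes a finite clopen partition of $(A^\N)^2$ (equivalently, replaces pairs of prefixes): it can only modify a bounded window around the origin. In the back-to-back interpretation used in Theorem~\ref{thm:2V}, the head is pinned at the origin and the blinker is a word $w$ or $w'$ that drifts arbitrarily far to the left as the simulated tape shifts. There is no $2V$ element (and no element of $\mathrm{RTM}(s,q)$) that toggles a bit at an unboundedly distant tape position on every application, so ``autonomous blinking'' is unavailable. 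Your parenthetical justification that these groups ``can perform arbitrary prefix-permutations in a finite clopen partition'' is exactly the constraint that forbids the construction: a prefix-permutation cannot see or touch the blinker when the head is far from it. Consequently the invariant you import from Lemma~\ref{lem:b} cannot even be set up here: in the moving-tape/$2V$ frame every iterate carries the $U$-word (the head) at the origin, which forms a clopen set, so the orbit of a head-bearing configuration never approaches a fixed point (a head-free configuration), and there is nothing for the activity-bit dichotomy to distinguish. Your remark about exploiting the mod-$4$ counter to handle TMT is aimed at the wrong obstacle; the real difficulty is not defining ``the activity bit at the blinker cell'' across iterates, but that neither $\alpha(n)$ nor any modification of it can change that bit without the head being adjacent. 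To make a dynamical (conjugation-invariant) argument work here, you would need a different invariant, detectable from the head's vicinity alone, that the paper's cautionary remark (heads emulating blinkers via their own modifiable data) is precisely warning may be subtle to construct.
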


Since $G = 2V$, one should take $H = G$ in this case, but the groups of Turing machines are not finitely-generated \cite{BaKaSa16}. For Turing machine groups, TMH and TMT are two different ways of seeing the group as a group of homeomorphisms of Cantor space, and while the groups themselves are isomorphic \cite{BaKaSa16}, conjugacy in $\Homeo(C)$ (i.e.\ conjugacy as dynamical systems) is different for these models.

\begin{example}
Consider Turing machines in the sense of \cite{BaKaSa16} with one state and binary alphabet. In TMT, these are elements of $\Homeo(\{0,1\}^\Z)$. Define $f,g \in \Homeo(\{0,1\}^\Z)$ by
\[ f(x)_i = \sigma^2(x)_i = x_{i+2} \;\; \mbox{ and } \;\; g(x)_i = \left\{\begin{array}{ll}
x_i & \mbox{if } i = 0 \\
x_{i+1} & \mbox{if } i \notin \{-1,0\} \\
x_{i+2} & \mbox{if } i = -1
\end{array}\right. . \]
These are not conjugate, as they have different entropies as dynamical systems. In TMT, tape movement is translated into movement of the head, and we obtain corresponding homeomorphisms $f',g' \in \Homeo(X)$ where $X = (\{0,1\}^\Z \times \Z) \cup \{0,1\}^\Z$ under a Cantor topology. It is easy to see they are conjugate, by encoding the parity of the head position into the bit carried by the head. \qee
\end{example}

One can of course ask the above question also for reversible Turing machines over other subshifts. Both $2V$ and groups of reversible Turing machines (in TMT and TMH) have obvious generalizations to arbitrary subshifts; in the case of reversible Turing machines, also over arbitrary groups.

The word problem is quite generally decidable for topological full groups.\footnote{It is decidable for any finitely-generated subgroup of the topological full group of a subshift with decidable language over a group with decidable word problem, such that the germ stabilizers of the shift action are trivial (meaning $\forall g \in G: \forall U \mbox{ nonempty open}: g|_U \neq \ID|_U$.} The torsion problem is decidable for one-dimensional sofic shifts, but is undecidable for two-dimensional full shifts \cite{BaKaSa16}.

\begin{question}
Given two elements of the topological full group of a one-dimensional full shift, is it decidable whether they are conjugate? What about other acting groups? What about other subshifts?
\end{question}

Finally, we have shown that the homeomorphism group of the Cantor space has eventually locally undecidable conjugacy problem. However, this is a rather complicated group, for example it is easy to see that it contains a copy of every countable group.

\begin{question}
For which topological spaces $X$ does $\Homeo(X)$ have eventually locally undecidable conjugacy problem? For manifolds, what about $\mathrm{Diff}(X)$ and its variants?
\end{question}

\section*{Acknowledgements}

I thank Carl-Fredrik Nyberg Brodda for useful literature pointers.

\bibliographystyle{plain}
\bibliography{../../../bib/bib}{}

\end{document}